\numberwithin{equation}{section}
\DeclareMathOperator{\OCAT}{OCA_T}
\DeclareMathOperator{\SL}{SL}
\DeclareMathOperator{\Aut}{Aut}
\DeclareMathOperator{\dist}{dist}
\newcounter{my_enumerate_counter}
\newcommand{\bbC}{{\mathbb C}}
\newcommand{\bbN}{{\mathbb N}}
\newcommand{\cU}{{\mathcal U}}
\newcommand{\cZ}{{\mathcal Z}}
\newcommand{\Ext}{{\rm Ext}}
\newcommand{\wst}{\mathrm{W}^*}
\newcommand{\cstar}{$\mathrm{C}^*$}
\newcommand{\cst}{\mathrm{C}^*}
\newcommand{\cstr}{\mathrm{C}^*_r}
\DeclareMathOperator{\Ad}{Ad}
\DeclareMathOperator{\rank}{rank}
\newcommand{\cO}{\mathcal O}
\newcommand{\bbZ}{{\mathbb Z}}
\newcommand{\cW}{{\mathcal W}}
\newcommand{\rs}{\restriction}
\newcommand{\cB}{\mathcal B}
\newcommand{\cK}{\mathcal K}
\newcommand{\cM}{\mathcal M}
\newcommand{\cQ}{\mathcal Q}
\newcommand{\calD}{\mathcal D}
\newcommand{\toPsi}{\buildrel\Psi\over \to}
\newcommand{\customlabel}[2]{%
\protected@write \@auxout {}{\string \newlabel {#1}{{#2}{\thepage}{}{}{}}}}
\newcommand{\oneHS}[1]{\|#1\|_{\textrm{HS},1}}
\newcommand{\hHS}[1]{\|#1\|_{\textrm{HS},H}}
\newcommand{\kHS}[1]{\|#1\|_{\textrm{HS},K}}
\newcommand{\nHS}[1]{\|#1\|_{\textrm{HS},n}}
\newtheorem{theorem}{Theorem}[section]
\newtheorem*{theorem*}{Theorem}
\newtheorem{proposition}[theorem]{Proposition}
\newtheorem*{proposition*}{Proposition}
\newtheorem{lemma}[theorem]{Lemma}
\newtheorem*{lemma*}{Lemma}
\newtheorem{corollary}[theorem]{Corollary}
\newtheorem*{corollary*}{Corollar}
\newtheorem*{fact*}{Fact}
\newtheorem{convention}[theorem]{Convention}
\theoremstyle{definition}
\newtheorem{definition}[theorem]{Definition}
\newtheorem*{definition*}{Definition}
\newtheorem{claim}[theorem]{Claim}
\newtheorem*{claim*}{Claim}
\newtheorem{conjecture}[theorem]{Conjecture}
\newtheorem*{conjecture*}{Conjecture}
\newtheorem{theoremi}{Theorem}
\newtheorem{corollaryi}[theoremi]{Corollary}
\theoremstyle{remark}
\newtheorem*{example*}{Example}
\newtheorem*{remark*}{Remark}
\newtheorem*{note*}{Note}
\newtheorem*{question*}{Question}
\author{Ilijas Farah}
\thanks{Partially supported by NSERC.}
\keywords{Calkin algebra, Kazhdan's property (T), and strongly self-absorbing \cstar-algebras.}
\subjclass{22D55, 
46L05, 
19K33}
\address{Department of Mathematics and Statistics,
York University,
4700 Keele Street,
Toronto, Ontario, Canada, M3J
1P3} 
\address{Matemati\v cki Institut SANU\\
Kneza Mihaila 36\\
11\,000 Beograd, p.p. 367\\
Serbia}
\email{email: ifarah@yorku.ca}
\urladdr{https://ifarah.mathstats.yorku.ca/}
\thanks{ORCID iD https://orcid.org/0000-0001-7703-6931}
\title[Calkin etc.]{The Calkin algebra, Kazhdan's property (T), and strongly self-absorbing \cstar-algebras}
\date{\today}
\begin{document}
\maketitle

\begin{abstract}  It is well-known that the relative commutant of every separable nuclear \cstar-sub\-al\-geb\-ra of the Calkin algebra has a unital copy of Cuntz algebra $\cO_\infty$.  We prove that the Calkin algebra has a separable \cstar-subalgebra whose relative commutant has no simple, unital, and noncommutative \cstar-subalgebra.
On the other hand, the corona  of every stable, separable \cstar-algebra that tensorially absorbs the Jiang--Su algebra $\cZ$ has the property that the relative commutant of every separable \cstar-subalgebra contains a unital copy of $\cZ$. Analogous result holds for other strongly self-absorbing \cstar-algebras.   
As an application, the Calkin algebra is not isomorphic to the corona of the stabilization of the Cuntz algebra~$\cO_\infty$,  any other Kirchberg algebra, or even the corona of the stabilization of any unital, $\cZ$-stable \cstar-algebra. 
\end{abstract}

The most intriguing open problem about the Calkin algebra $\cQ(H)$ is whether it can have a K-theory reversing automorphism (\cite{BrDoFi:Unitary}, \cite{BrDoFi}). A possible line of  attack on this problem was suggested by Shuang Zhang. The corona $\cQ(\cO_\infty\otimes \cK)$ of the stabilization of the Cuntz algebra $\cO_\infty$ is K-theoretically indistiguishable from $\cQ(H)$, but it is known that the Kirchberg--Phillips classification theorem 
(see \cite{Phi:Classification}, \cite{Ror:Classification}, \cite{gabe2019classification}) implies that it has a $K$-theory reversing automorphism, and an isomorphism between $\cQ(H)$ and $\cQ(\cO_\infty\otimes \cK)$ would give  a K-theory reversing automorphism of $\cQ(H)$.

Remarkably, many of the defining properties of $\cQ(H)$  are shared by $\cQ(\cO_\infty\otimes\cK)$ and coronas of other separable, purely infinite, non-unital \cstar-algebra. These algebras have real rank zero and satisfy a (generalized) Weyl--von Neumann theorem (\cite{zhang1992certain}; for other coronas with real rank zero see \cite[Theorem~3.8 and definitions on pp. 245--246]{ng2022real}). 
They are simple and purely infinite:  By \cite[Theorem~3.2]{rordam1991ideals} or by  \cite[Definition~2.5 and Theorem~2.8]{lin1991simple} and \cite{lin2004simple}, if $A$ is unital then $\cQ(A\otimes \cK)$ is simple if and only if $A=M_n(\bbC)$ or $A$ is simple and purely infinite.  Every separable, unital \cstar-subalgebra of $\cQ(H)$ is equal to its double commutant (this is a consequence of Voiculescu's theorem for $\cQ(H)$, and proved in  \cite[Theorem~B]{giordano2019relative} using the Elliott--Kucerovsky theory of absorbing extensions,~\cite{elliott2007relative} for $\cQ(\cO_\infty\otimes \cK)$; see also \cite{ng2018double}). Many general properties of the Ext functor resemble the ones familiar from the BDF theory (\cite{BrDoFi:Unitary},  \cite{BrDoFi}, also  \cite{Dav:C*}) by  \cite{ng2019functorial}. Nevertheless\dots

\begin{theoremi}\label{T.A}
	The Calkin algebra is not isomorphic to the corona of the stabilization of  $\cO_\infty$. 
\end{theoremi}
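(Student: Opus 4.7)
The plan is to identify a structural property preserved under $^*$-isomorphism and verify that it separates the two coronas. The natural candidate, suggested by the two main results summarized in the abstract, is the property $(P)$: for every separable $\cstar$-subalgebra $A$, the relative commutant $A'\cap M$ contains a simple, unital, noncommutative $\cstar$-subalgebra. Property $(P)$ is manifestly invariant under $^*$-isomorphism, since $^*$-isomorphisms carry separable subalgebras to separable subalgebras, commute with the relative commutant operation, and preserve simplicity, unitality, and noncommutativity.

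First I would verify $(P)$ for $\cQ(\cO_\infty\otimes\cK)$. Since $\cO_\infty$ is strongly self-absorbing, the algebra $\cO_\infty\otimes\cK$ is stable, separable, and $\cO_\infty$-absorbing. The ``analogous result'' mentioned in the abstract, applied with $\cO_\infty$ in the role of $\cZ$, then yields that for every separable $\cstar$-subalgebra $A$ of $\cQ(\cO_\infty\otimes\cK)$, the relative commutant $A'\cap\cQ(\cO_\infty\otimes\cK)$ contains a unital copy of $\cO_\infty$. As $\cO_\infty$ is simple, unital, and noncommutative, property $(P)$ holds for $\cQ(\cO_\infty\otimes\cK)$.

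Next, to refute $(P)$ for $\cQ(H)$ I would invoke the first main theorem summarized in the abstract: there exists a separable $\cstar$-subalgebra $B\subseteq\cQ(H)$ whose relative commutant $B'\cap\cQ(H)$ contains no simple, unital, noncommutative $\cstar$-subalgebra. Combining the two previous paragraphs, property $(P)$ distinguishes $\cQ(H)$ from $\cQ(\cO_\infty\otimes\cK)$, so they are not $^*$-isomorphic. This is Theorem~A.

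The genuine difficulties lie in the two input results themselves, both handled in the body of the paper; the deduction of Theorem~A from them is a soft formal combination. I expect the hard part to be the construction of the pathological separable $B\subseteq\cQ(H)$: one must diagonalize against every possible simple, unital, noncommutative $\cstar$-subalgebra that could sit in $B'\cap\cQ(H)$, which presumably requires a delicate transfinite enumeration or set-theoretic argument tied to the Kazhdan property (T) theme of the paper's title. The absorption statement for coronas of $\cZ$- and $\cO_\infty$-stabilized algebras, while technically involved, is expected to follow from a more standard pattern exploiting strong self-absorption and the intertwining machinery available inside coronas of stable algebras.
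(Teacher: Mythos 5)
Your deduction is exactly the paper's: Theorem~A follows immediately by combining Theorem~\ref{T.B} (the separable $B\subseteq\cQ(H)$ with no simple, unital, noncommutative subalgebra of its relative commutant) with Theorem~\ref{T.C} applied to $D=\cO_\infty$, since the resulting property is an isomorphism invariant. One small correction to your closing speculation: the construction of $B$ uses no set-theoretic diagonalization at all --- it is Wassermann's property~(T) algebra $\cW(\Gamma)$, and the obstruction comes from a $^*$-homomorphism of $\cQ(H)\cap B'$ onto an abelian algebra that restricts to the identity on a copy of $\ell_\infty/c_0$.
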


To put this theorem in proper context, see the beginning of \S\ref{S.concluding}. Its proof does not use set theory at all, and model theory is making only a cameo appearance in Corollary~\ref{C2} (not used in the proof of our main results).

The proof of Theorem~\ref{T.A} breaks down as follows. Theorem~\ref{T.B} below is based on an extension of S. Wasserman's proof that a certain quotient of the full group algebra associated with a property (T) group with infinitely many inequivalent irreducible representations on finite-dimensional Hilbert spaces has the property that its $\Ext$ is not a group  (\cite{wassermann1991c}, rediscovered in \cite[Theorem~3.7.10]{higson2000analytic}). More precisely, we strengthen the variants of Wassermann's  result developed in \cite [Corollary~6.5.11]{brown2006invariant} and \cite[\S 17]{BrOz:C*}.  The following is proved at the end of~\S\ref{S.Gamma.A}, as a consequence of Proposition~\ref{P.linfty}. 

\begin{theoremi}\label{T.B} There is a separable \cstar-subalgebra $B$ of $\cQ(H)$ whose relative commutant  $\cQ(H)\cap B'$ has no  simple, noncommutative \cstar-subalgebra that contains the unit of $\cQ(H)$.  
\end{theoremi}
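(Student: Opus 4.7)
The plan is to construct $B$ using a Wassermann-style family of finite-dimensional irreducible representations of a property (T) group. Let $\Gamma$ be a countable discrete group with Kazhdan's property (T) and Kazhdan pair $(F,\kappa)$, chosen so that $\Gamma$ admits a sequence $(\pi_n)_{n\in\bbN}$ of pairwise inequivalent, irreducible, finite-dimensional unitary representations $\pi_n\colon\Gamma\to U(H_n)$, with $\dim H_n=d_n<\infty$ (for instance $\Gamma=\mathrm{SL}_3(\bbZ)$, whose residual finiteness supplies infinitely many such $\pi_n$ through its finite quotients). Set $H=\bigoplus_n H_n$, $\pi=\bigoplus_n\pi_n\colon\Gamma\to\cB(H)$, let $P_n\in\cB(H)$ be the projection onto $H_n$, and take $B$ to be the separable C*-subalgebra of $\cQ(H)$ generated by the image of $\pi(\Gamma)$.

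The heart of the argument, presumably the content of Proposition~\ref{P.linfty}, is a rigidity statement: every $T\in\cB(H)$ satisfying $[T,\pi(g)]\in\cK(H)$ for all $g\in F$ has the form $T=\sum_n\lambda_nP_n+K$ with $(\lambda_n)\in\ell^\infty(\bbN)$ and $K\in\cK(H)$. Granting this, $\cQ(H)\cap B'$ coincides with the image of $\ell^\infty(\bbN)\subseteq\cB(H)$ inside $\cQ(H)$, namely $\ell^\infty(\bbN)/c_0(\bbN)$, which is commutative. Since every unital simple C*-subalgebra of a commutative C*-algebra is isomorphic to $\bbC$ (and hence commutative), Theorem~B follows at once.

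To prove the rigidity claim I would decompose $T$ into blocks $T_{ij}=P_iTP_j\in\cB(H_j,H_i)$ and apply property (T) to the unitary representation $g\mapsto\pi_i(g)\otimes\overline{\pi_j(g)}$ of $\Gamma$ on the Hilbert--Schmidt space $(\cB(H_j,H_i),\|\cdot\|_{\mathrm{HS}})$. For $i\ne j$ this representation has no nonzero invariant vectors, by Schur's lemma and the assumption $\pi_i\not\simeq\pi_j$, so property (T) yields an inequality of the form $\kappa\|T_{ij}\|_{\mathrm{HS}}^2 \le \sum_{g\in F}\|\pi_i(g)T_{ij}-T_{ij}\pi_j(g)\|_{\mathrm{HS}}^2$. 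For $i=j$ the representation on $\cB(H_i)$ has a one-dimensional invariant subspace spanned by $P_i$, and the same inequality holds after subtracting the scalar part $\lambda_iP_i$. Compactness of $[T,\pi(g)]$ yields operator-norm decay of the blocks $\pi_i(g)T_{ij}-T_{ij}\pi_j(g)$ as $i+j\to\infty$, and this information must then be transferred to $\|T_{ij}\|_{\mathrm{HS}}\to0$ (and to $\|T_{ii}-\lambda_iP_i\|_{\mathrm{HS}}\to0$) at a rate sufficient to conclude that $T-\sum_n\lambda_nP_n$ is compact.

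The main obstacle is the quantitative mismatch between the Hilbert--Schmidt lower bound supplied by property (T) and the operator-norm control supplied by compactness of $[T,\pi(g)]$: on a block of dimension $d_id_j$, the ratio $\|\cdot\|_{\mathrm{HS}}/\|\cdot\|_{\mathrm{op}}$ can be as large as $\sqrt{d_id_j}$. I expect the proof either to restrict to sequences $(\pi_n)$ whose dimensions are bounded or grow slowly, or to exploit the full compactness of $[T,\pi(g)]$ (rather than mere operator-norm decay of blocks) by approximating the commutator by finite-rank operators and transferring the HS information in stages, along the lines of the Wassermann-style templates in \cite[Corollary~6.5.11]{brown2006invariant} and \cite[\S17]{BrOz:C*} that the author cites as models. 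Once the rigidity statement is in hand, Theorem~B is a one-line consequence.
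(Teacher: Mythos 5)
Your setup (Wassermann's algebra $B=\cW(\Gamma)$ built from the pairwise inequivalent finite-dimensional irreducibles of a residually finite property (T) group) is the same as the paper's, but the central ``rigidity statement'' you propose to prove is false, and the obstacle you flag in your last paragraph is not a technical nuisance but a fatal one. The relative commutant $\cQ(H)\cap B'$ is \emph{not} $\ell_\infty(\bbN)/c_0(\bbN)$: by Voiculescu's theorem some nonzero projection $p_0\in\cQ(H)\cap B'$ represents the trivial extension of $B$, and iterating Voiculescu produces a (non-unital) copy of $\cO_\infty$ inside $\cQ(H)\cap B'$ --- this is precisely why the theorem must insist that the simple subalgebra contain the unit of $\cQ(H)$. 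Hence no argument can show that every $T$ essentially commuting with $\pi(\Gamma)$ is a compact perturbation of $\sum_n\lambda_nP_n$. Quantitatively, property (T) together with Schur's lemma only yields $\nHS{TP_n-\tau_n(P_nTP_n)P_n}\to 0$ in the \emph{normalized} Hilbert--Schmidt norms; since $\dim H_n\to\infty$ is forced (there are only finitely many inequivalent irreducibles in each dimension, so you cannot keep the dimensions bounded), this gives no operator-norm control: a rank-one projection in $\cB(H_n)$ has normalized HS norm $d_n^{-1/2}\to 0$ but operator norm $1$.

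The paper's fix is to lower the target. Instead of proving $T-E(T)\in\cK(H)$ for the block conditional expectation $E(T)=\sum_n\tau_n(P_nTP_n)P_n$, one shows that the normalized-HS asymptotics above make the induced map multiplicative on $\cQ(H)\cap\cW(\Gamma)'$, i.e., they produce a unital $^*$-homomorphism $\Psi$ from $\cQ(H)\cap\cW(\Gamma)'$ onto the abelian algebra $\pi[Z(\calD)]\cong\ell_\infty/c_0$ which restricts to the identity there (Proposition~\ref{P.linfty}); its kernel is nonzero, but that is irrelevant. Given a unital simple $C\subseteq\cQ(H)\cap B'$, the restriction of $\Psi$ to $C$ is a unital, hence injective, $^*$-homomorphism with abelian range, so $C\cong\bbC$. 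In short: replace ``the relative commutant is commutative'' by ``the relative commutant admits a unital $^*$-homomorphism onto a commutative algebra''; the Hilbert--Schmidt estimates you already have in mind are exactly what is needed for multiplicativity, and nothing stronger is available.
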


The emphasis in the conclusion is of course on the fact that the simple \cstar-subalgebra is assumed to be a unital subalgebra of $\cQ(H)$.   This is because every projection $p$ in $\cQ(H)\cap B'$ corresponds (after identifying $\cQ(H)$ with its corner corresponding to $p$) to an extension\footnote{As customary, we identify extensions with their Busby invariants, that is, unital $^*$-homomorphisms into the Calkin algebra. See e.g., \cite{higson2000analytic}.}  of $B$, defined by $\Phi(b):=pb$,  and equivalent extensions correspond to projections that are Murray--von Neumann equivalent in $\cQ(H)\cap B'$. Finally, by Voiculescu's theorem (see e.g., \cite{Arv:Notes} or \cite{BrOz:C*}) every extension of $B$ absorbs the trivial extension. Therefore some $p_0\in\cQ(H)\cap B’$ corresponds to the trivial extension. By repeatedly applying Voiculescu's theorem, one finds orthogonal projections $p_n$, for $n\geq 1$, in $p_0(\cQ(H)\cap B’)$, each one of them Murray—von Neumann equivalent to~$p_0$. If $v_n$ is a partial isometry in $\cQ(H)\cap B’$ such that $v_n^* v_n=p_0$ and $v_nv_n^*=p_n$, then $\cst(v_n : n\in \bbN)$  is a \cstar-subalgebra of $\cQ(H)\cap B’$ isomorphic to $\cO_\infty$.

The other component of the proof of Theorem~\ref{T.A} applies to coronas of stablizations of strongly self-absorbing \cstar-algebras (see \S\ref{S.Terminology} below). The following is Theorem~\ref{T.Q(A)}.

\begin{theoremi}\label{T.C} Suppose that $D$ is a separable, unital, and  strongly self-absorbing  \cstar-algebra. If $A$ is separable, unital, and tensorially $D$-absorbing then  for every separable \cstar-subalgebra $B$ of $\cQ(A\otimes \cK)$ there is an isomorphic copy of $D$ in $\cQ(A\otimes \cK)\cap B'$ that contains the unit of $\cQ(A\otimes \cK)$. 
\end{theoremi}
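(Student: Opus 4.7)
\textit{Proof plan.} I would set $E := A\otimes\cK$, which is separable, stable, and $D$-absorbing (since $E\otimes D\cong A\otimes D\otimes\cK\cong E$), and fix a separable $B\subseteq\cQ(E)$, a separable lift $\tilde B\subseteq\cM(E)$, a countable dense sequence $(b_n)\subseteq\tilde B$, and a countable dense sequence $(d_n)\subseteq D$. Every strongly self-absorbing \cstar-algebra is simple, so any unital $^*$-homomorphism $D\to\cQ(E)$ is automatically injective; the task therefore reduces to producing such a homomorphism whose image commutes with every $b_n$ modulo $E$.

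The argument splits into two steps, the first being the main one. The first step is a \emph{finitary centralization lemma}: for every finite $F\subseteq\cM(E)$ and every $\e>0$ there is a unital $^*$-homomorphism $\psi\colon D\to\cM(E)$ with $\|[\psi(d),x]+E\|\leq\e\|d\|$ for all $d\in D$ and $x\in F$. I would start from the canonical unital embedding $\iota_0\colon D\to\cM(E)$ obtained by fixing an isomorphism $\alpha\colon E\to E\otimes D$, extending it to $\tilde\alpha\colon\cM(E)\to\cM(E\otimes D)$, and pulling back the inclusion $1\otimes D\hookrightarrow\cM(E)\otimes D\subseteq\cM(E\otimes D)$ through $\tilde\alpha^{-1}$. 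To reposition $\iota_0$ so that its image approximately centralizes $F$ modulo $E$, I would exploit the iterated absorption $E\cong E\otimes D\otimes D$ together with the defining property of strong self-absorption --- the two canonical embeddings $d\mapsto d\otimes 1$ and $d\mapsto 1\otimes d$ into $D\otimes D\cong D$ are approximately unitarily equivalent by unitaries in $D\otimes D$. Concretely, I would conjugate $\iota_0$ by an appropriate unitary coming from a $D$-factor interpolated into the middle of the triple decomposition; a quasi-central approximate unit argument, legitimate because $E$ is $\sigma$-unital and $F$ is finite, then drives the defect $[\psi(d),x]$ into $E$ up to the prescribed $\e$.

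The second step is a saturation argument. The corona of a $\sigma$-unital \cstar-algebra, hence $\cQ(E)$, is countably saturated for positive quantifier-free conditions in the continuous logic of \cstar-algebras (Farah--Hart). I would consider the countable type in variables $(x_n)_{n\in\bbN}$ asserting that $d_n\mapsto x_n$ extends to a unital $^*$-homomorphism $D\to\cQ(E)$ (encoded by the algebraic, $^*$-, and unital relations among the $d_n$'s) and that $\|[x_n,b_m]\|=0$ for all $n$ and $m$. Each finite fragment of this type is realized by setting $x_n:=\pi(\psi(d_n))$, where $\pi\colon\cM(E)\to\cQ(E)$ is the quotient map and $\psi$ is furnished by the finitary lemma applied to the finitely many lifts of the $b_m$'s in the fragment and a sufficiently small $\e$; crucially $\pi\circ\psi$ is a \emph{genuine} unital $^*$-homomorphism, so the $^*$-algebraic conditions hold exactly and only the commutator conditions need to be approximated. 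Countable saturation promotes this finite approximate realizability to exact realizability, producing the desired unital embedding $\Phi\colon D\to\cQ(E)\cap B'$.

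The main obstacle will be the finitary centralization lemma. The difficulty is converting a ``rigid'' unital embedding $\iota_0$, which comes from a single tensor decomposition and only automatically commutes with $\cM(E)\otimes 1$, into a flexible family of embeddings able to approximately centralize any prescribed finite set of multipliers modulo $E$. This is where the full force of strong self-absorption --- specifically the approximate interchangeability of the two $D$-factors inside $D\otimes D\cong D$ --- must be combined with the stability and $D$-absorption of $E$; the saturation step is then routine.
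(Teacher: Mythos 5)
Your second step is where the argument breaks, and it is not routine. What is known (Farah--Hart, and \cite[Corollary 15.1.6]{Fa:STCstar}) is that coronas of $\sigma$-unital \cstar-algebras are countably \emph{degree-1} saturated. The type you propose to realize encodes multiplicativity of $d_n\mapsto x_n$ via conditions of the form $\|x_nx_m-x_k\|=0$, which are degree 2, so degree-1 saturation does not apply; what you actually need is quantifier-free countable saturation of $\cQ(A\otimes\cK)$, which is not a known theorem. Indeed, this paper explicitly flags that $\cQ(A\otimes\cK)$ is ``likely not even quantifier-free countably saturated,'' and the analogous path algebra $A_{\text\faForward}$ is known \emph{not} to be countably saturated (\cite[Exercise~16.8.36]{Fa:STCstar}, as corrected in \S 5 of the paper). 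So the step you describe as routine is precisely the one for which no tool exists, and the paper's proof is designed to avoid saturation altogether.

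The finitary centralization lemma also glosses over the real obstruction. A general multiplier $x\in\cM(A\otimes\cK)$ is not approximately contained in any finite stage $A_n\otimes\cK$, and the defect $[\iota_0(d),x]$ is a multiplier with no reason to lie in, or be pushed into, $A\otimes\cK$ by a quasicentral approximate unit; conjugating the rigid embedding by a single unitary cannot fix this. The paper's substitute is a tri-diagonal decomposition (Lemma~\ref{L.diagonalization}): each $a_m$ is, modulo $A\otimes\cK$, a sum of two block-diagonal operators whose blocks lie in corners of $A_{f(i)}\otimes\cK$ for finite stages $A_{f(i)}$. One then builds $\Psi(d)=\sum_n\Upsilon_n\circ\Psi_n(d)$ as a strictly convergent block-diagonal sum of \emph{genuine} unital embeddings, where consecutive $\Psi_n$ differ in norm by at most $\e(i)\to 0$ thanks to a uniformly discretized path of unitaries implementing the flip on $D\otimes D$ (Lemma~\ref{L.discretization}, resting on \cite[Theorem~2.2]{dadarlat2009kk}). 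The commutators with the off-diagonal pieces then tend to $0$ in norm block-by-block and hence fall into $A\otimes\cK$; the commutators with the diagonal pieces vanish exactly. This explicit gluing is what replaces both your finitary lemma and your saturation step, and without something equivalent to the block decomposition your plan does not go through.
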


Theorem~\ref{T.B} and Theorem~\ref{T.C} immediately imply the following strengthening of Theorem~\ref{T.A}. Here $\cZ$ denotes the Jiang--Su algebra,  the initial object in the category of strongly self-absorbing \cstar-algebras (\cite{winter2011strongly}), and for Elliott-classifiable \cstar-algebras see \cite{Ror:Classification}.  
 
 \begin{corollaryi}\label{C.D}
	The Calkin algebra is not isomorphic to the corona of $A\otimes \cK$, for any separable, unital, and tensorially $\cZ$-absorbing  \cstar-algebra~$A$. 
		In particular, if~$A$ is any Elliott-classifiable \cstar-algebra  then~$\cQ(A\otimes \cK)$ is not isomorphic to $\cQ(H)$. \qed 
\end{corollaryi}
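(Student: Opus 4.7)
The plan is to deduce the corollary directly from Theorem~\ref{T.B} and Theorem~\ref{T.C}, using the Jiang--Su algebra $\cZ$ as the strongly self-absorbing \cstar-algebra $D$. Suppose, toward a contradiction, that $A$ is a separable, unital, and tensorially $\cZ$-absorbing \cstar-algebra and that there is a $^*$-isomorphism $\Phi\colon \cQ(H)\to \cQ(A\otimes \cK)$. Apply Theorem~\ref{T.B} to fix a separable \cstar-subalgebra $B\subseteq \cQ(H)$ such that $\cQ(H)\cap B'$ contains no simple, noncommutative \cstar-subalgebra whose unit is~$1_{\cQ(H)}$.

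Next, push $B$ forward through $\Phi$. Since $\Phi$ is a unital isomorphism of unital \cstar-algebras, $\Phi(B)$ is a separable \cstar-subalgebra of $\cQ(A\otimes \cK)$ and $\Phi(1_{\cQ(H)})=1_{\cQ(A\otimes \cK)}$. Theorem~\ref{T.C}, applied with $D=\cZ$ (which is separable, unital, and strongly self-absorbing), then produces a \cstar-subalgebra $C\subseteq \cQ(A\otimes \cK)\cap \Phi(B)'$ that is isomorphic to $\cZ$ and contains $1_{\cQ(A\otimes \cK)}$. Pulling back, $\Phi^{-1}(C)$ is a \cstar-subalgebra of $\cQ(H)\cap B'$ isomorphic to $\cZ$ and containing $1_{\cQ(H)}$. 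Since $\cZ$ is simple, unital, and noncommutative, this contradicts the defining property of $B$, so no such $\Phi$ can exist.

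For the \emph{in particular} clause, recall that Elliott-classifiability of~$A$ includes (indeed, requires) that~$A$ is separable, unital, and $\cZ$-stable, so the previous paragraph applies verbatim. Because the entire argument is a bookkeeping combination of the two main theorems, there is no genuine obstacle at the level of the corollary itself; all the work is packaged inside Theorem~\ref{T.B} and Theorem~\ref{T.C}, which is why the author's write-up can end with \qed right after the statement.
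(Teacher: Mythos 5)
Your proposal is correct and is exactly the argument the paper intends: the corollary is stated with an immediate \qed because it is precisely this combination of Theorem~\ref{T.B} (the bad separable subalgebra $B$ of $\cQ(H)$) and Theorem~\ref{T.C} with $D=\cZ$ (a unital copy of the simple, noncommutative algebra $\cZ$ in every relative commutant of $\cQ(A\otimes\cK)$), transported through a putative isomorphism to reach a contradiction. No discrepancy with the paper's approach.
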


It is well-known that many instances of Corollary~\ref{C.D} can be proven by K-theoretic means, as follows.  To the short exact sequence
\[
0\to A\otimes\cK \to \cM(A\otimes \cK)\to \cQ(A\otimes \cK)\to 0
\]
one associates the six-term exact sequence

\begin{tikzpicture}
	\matrix[row sep=0.6cm,column sep=.6cm] {
		\node  (AK0) {$K_0(A\otimes\cK)$}; &
		\node (M0)  {$K_0(\cM(A\otimes \cK))$};&
		\node (Q0)  {$K_0(\cQ(A\otimes \cK))$}; \\ 
		\node (Q1) {$K_1(\cQ(A\otimes \cK))$};&
		\node (M1)  {$K_1(\cM(A\otimes \cK))$};&
		\node  (AK1) {$K_1(A\otimes\cK)$}; \\
	};
	\draw (AK0) edge [->] (M0);
	\draw  (M0) edge[->] (Q0); 
	\draw  (Q0) edge[->] (AK1); 
	\draw  (AK1) edge[->] (M1); 
	\draw  (M1) edge[->] (Q1); 
	\draw  (Q1) edge[->] (AK0); 
\end{tikzpicture}.

Since the K-theory of the stable multiplier algebra $\cM(A\otimes \cK)$ (\cite[Definition~12.1.3]{blackadar1998k}) is trivial (\cite[Proposition~12.2.1]{blackadar1998k}),  the vertical arrows in this six-term exact sequence  are isomorphisms.  Therefore $K_i(A)=K_{1-i}(\cQ(A\otimes \cK))$ (this is \cite[Corollary~12.2.3]{blackadar1998k}). This implies many instances of Corollary~\ref{C.D}, but not Theorem~\ref{T.A} because $K_0(\cO_\infty)=\bbZ=K_1(\cQ(H))$ and $K_1(\cO_\infty)=0=K_0(\cQ(H))$. 

 For a unital \cstar-algebra $A$ consider the \emph{path algebra}\footnote{There is no universally accepted notation for the path algebra. Some authors use~$A_\infty$, a notation largely established for the more common asymptotic sequence algebra $\ell_\infty(A)/c_0(A)$.  The authors of \cite{guentner2000equivariant} used somewhat cumbersome ${\mathfrak A}A={\mathfrak T}A/{\mathfrak T}_0 A$ and the original notation in \cite{connes1989almost}, $\cQ(A)$, is nowadays commonly used for coronas. The terminology `path algebra’ has been suggested by G\' abor Szab\' o.} 
\[
A_{\text{\faForward}}:=C_b([0,\infty),A)/C_0([0,\infty),A). 
\]
In Theorem~\ref{T.Ainfty} we prove the analog of Theorem~\ref{T.C} for $A_{\text\faForward}$. 
The following is  Corollary~\ref{C1}.

\begin{theoremi}\label{T.E} Suppose that $D$ is a separable, unital,   and strongly self-absorbing  \cstar-algebra. Then  all unital $^*$-homomorphisms of $D$ into~$A_{\text\faForward}$ are unitarily equivalent and all unital $^*$-homomorphisms of $D$ into~$\cQ(A\otimes \cK)$ are unitarily equivalent.
\end{theoremi}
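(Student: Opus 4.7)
The plan is to combine the relative commutant theorems Theorem~\ref{T.Q(A)} and its path-algebra analog Theorem~\ref{T.Ainfty} with the approximately-inner-flip characterization of strongly self-absorbing \cstar-algebras, and then to upgrade the resulting approximate unitary equivalence to an exact one using saturation of the ambient algebra. Given the abundance of commuting copies of $D$ guaranteed by the two theorems, the rest of the argument is standard; the only subtle point is the passage from approximate to exact unitary equivalence.

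Let $M$ denote either $A_{\text\faForward}$ or $\cQ(A\otimes \cK)$ and let $\varphi_0,\varphi_1\colon D\to M$ be unital \starhs. Set $B:=\cst(\varphi_0(D)\cup\varphi_1(D))$, a separable \cstar-subalgebra of $M$, and apply Theorem~\ref{T.Ainfty} in the path-algebra case or Theorem~\ref{T.Q(A)} in the corona case to produce a unital \starh\ $\iota\colon D\to M$ with $\iota(D)\subseteq M\cap B'$. Since $D$ is nuclear, the commuting inclusions $\varphi_i(D)\cup\iota(D)\subseteq M$ assemble into unital \starhs\ $\Phi_i\colon D\otimes D\to M$ defined by $\Phi_i(d\otimes e)=\varphi_i(d)\iota(e)$, for $i=0,1$.

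Strong self-absorption of $D$ is equivalent to the flip automorphism on $D\otimes D$ being approximately inner, so there are unitaries $v_n\in D\otimes D$ with $v_n(a\otimes 1_D)v_n^*\to 1_D\otimes a$ for every $a\in D$. Setting $u_{i,n}:=\Phi_i(v_n)\in M$ then yields unitaries satisfying $u_{i,n}\varphi_i(a)u_{i,n}^*=\Phi_i(v_n(a\otimes 1_D)v_n^*)\to\Phi_i(1_D\otimes a)=\iota(a)$ for every $a\in D$ and $i=0,1$. Hence both $\varphi_0$ and $\varphi_1$ are approximately unitarily equivalent (via unitaries of $M$) to the single map $\iota$, and in particular the unitaries $w_n:=u_{1,n}^*u_{0,n}$ witness $w_n\varphi_0(a)w_n^*\to\varphi_1(a)$ for each $a\in D$.

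The final step, and the main obstacle, is the lift from approximate to exact unitary equivalence. Both $A_{\text\faForward}$ and $\cQ(A\otimes\cK)$ are \cdgs: for the corona of a $\sigma$-unital \cstar-algebra this is by now classical, and the path-algebra version follows by essentially the same argument. Fixing a countable dense subset $(d_k)$ of $D$, the conditions ``$u$ is a unitary'' together with ``$u\varphi_0(d_k)u^*=\varphi_1(d_k)$'' for all $k$ form a countable degree-one $^*$-polynomial type with parameters drawn from the separable subalgebra $B\subseteq M$, and this type is approximately realized by the sequence $w_n$ produced above. Saturation then provides a single unitary $u\in M$ realizing the type exactly, whence $u\varphi_0(d)u^*=\varphi_1(d)$ for all $d\in D$, establishing both claims of the theorem simultaneously.
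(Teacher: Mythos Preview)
Your first two paragraphs are fine and match the paper: use $D$-saturation (Theorems~\ref{T.Ainfty} and~\ref{T.Q(A)}) to find a third copy $\iota(D)$ commuting with both $\varphi_i(D)$, then use the approximately inner flip on $D\otimes D$ to conclude that $\varphi_0$ and $\varphi_1$ are approximately unitarily equivalent.

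The gap is in your final paragraph. The conditions ``$u^*u=1$'' and ``$uu^*=1$'' are degree-$2$ $*$-polynomial conditions, not degree-$1$, so the type you wrote down is \emph{not} a degree-$1$ type and countable degree-$1$ saturation does not apply. (The intertwining conditions $u\varphi_0(d_k)=\varphi_1(d_k)u$ are degree-$1$, but saturating only these yields a contraction $x$ that intertwines, with no control on $x^*x$ or $xx^*$.) The paper is explicit that neither $A_{\text\faForward}$ nor $\cQ(A\otimes\cK)$ is expected to be countably quantifier-free saturated, so you cannot simply strengthen the saturation hypothesis; indeed a footnote to Corollary~\ref{C1} warns that ``the model-theoretic point of view is misleading'' here.

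The paper's fix is different in two places. First, instead of the merely approximately inner flip, it invokes the Dadarlat--Winter result (using $K_1$-injectivity of $D$) that the flip on $D\otimes D$ is \emph{strongly asymptotically inner}: there is a norm-continuous path $u_t$ of unitaries with $u_0=1$ implementing it. This upgrades your approximate unitary equivalence to an \emph{asymptotic} one, i.e., a continuous path $w_t$ in $\cQ(E)$ (where $E=C_0([0,\infty),A)$ or $A\otimes\cK$) with $w_t\varphi_0(d)w_t^*\to\varphi_1(d)$. Second, in place of saturation the paper uses a reindexing (``folding'') argument, isolated as Lemma~\ref{L.folding}: the path $(w_t)$ defines a single unitary in $\cQ(E)_{\text\faForward}$ conjugating the diagonal copy of $\varphi_0(D)$ to that of $\varphi_1(D)$, and \cite[Proposition~1.4]{PhWe:Calkin} lets one pull this unitary back to $\cQ(E)$ itself. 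That gives exact unitary equivalence without any appeal to saturation.
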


There is an obvious diagonal embedding of $A$ into $A_{\text\faForward}$, and we identify~$A$ with its image.  Together with standard methods, Theorem~\ref{T.E} and Theorem~\ref{T.Ainfty} imply the following analog of a vital property of (relative commutants in) ultrapowers and asymptotic sequence algebras (\cite[Theorem~7.2.2]{Ror:Classification}),  appearing here as  Corollary~\ref{T.commutant}.

\begin{corollaryi}
 \label{T.RC} Suppose that $D$ is a separable, unital,   and strongly self-absorbing  \cstar-algebra. If $A$ is a separable and unital \cstar-algebra then the following are equivalent. 
\begin{enumerate}
	\item $A$ is tensorially $D$-absorbing. 
	\item $D$ unitally embeds into $A_{\text\faForward}\cap A'$ . 
\end{enumerate}
\end{corollaryi}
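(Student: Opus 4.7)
The plan is to prove the two implications separately, each as a short argument building on prior material in the paper together with a standard characterization from \cite{Ror:Classification}.

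For the implication $(1) \Rightarrow (2)$, I would apply Theorem~\ref{T.Ainfty} with the separable subalgebra $B \subseteq A_{\text\faForward}$ taken to be the diagonal copy of $A$. Since $A$ is assumed separable, unital, and tensorially $D$-absorbing, the conclusion of Theorem~\ref{T.Ainfty} directly produces a unital copy of $D$ inside $A_{\text\faForward} \cap A'$. (Recall that $D$ is simple, being strongly self-absorbing, so a unital copy is automatically an embedded copy.)

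For the converse $(2) \Rightarrow (1)$, the strategy is to transfer a unital embedding $\iota : D \hookrightarrow A_{\text\faForward} \cap A'$ into a unital $^*$-homomorphism $D \to (\ell_\infty(A)/c_0(A)) \cap A'$, and then invoke the Toms--Winter characterization of $D$-absorption via the asymptotic sequence algebra, namely \cite[Theorem~7.2.2]{Ror:Classification}, which is the sequence-algebra analog of precisely what we are proving. The transfer uses the sampling-at-integers map: restriction of a bounded continuous path $[0,\infty) \to A$ to $\bbN$ is a $^*$-homomorphism $C_b([0,\infty), A) \to \ell_\infty(A)$ that carries $C_0([0,\infty), A)$ into $c_0(A)$, hence descends to a unital $^*$-homomorphism $\pi : A_{\text\faForward} \to \ell_\infty(A)/c_0(A)$. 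Because $\pi$ sends the diagonal copy of $A$ to the diagonal copy of $A$, it restricts to a unital $^*$-homomorphism between the two relative commutants, and $\pi \circ \iota$ is the desired map.

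The substantive content, and the place where real work occurs, is concentrated in Theorem~\ref{T.Ainfty}, which is the path-algebra analog of Theorem~\ref{T.C}. Given that result, the remainder of Corollary~\ref{T.RC} is routine: the only points to verify are that the sampling map is a well-defined unital $^*$-homomorphism carrying the diagonal of $A$ to the diagonal of $A$, both of which are immediate from the definitions. The main ``obstacle'', such as it is, lies in invoking the correct form of the Toms--Winter theorem; since $A$ is only assumed separable and unital (not nuclear), one must use the version for strongly self-absorbing $D$, which is precisely \cite[Theorem~7.2.2]{Ror:Classification}.
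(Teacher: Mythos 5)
Your proof is correct and takes essentially the same route as the paper: $(1)\Rightarrow(2)$ is Theorem~\ref{T.Ainfty} applied to the diagonal copy of $A$, and $(2)\Rightarrow(1)$ uses the restriction-to-$\bbN$ map (which kills $C_0([0,\infty),A)$) followed by \cite[Theorem~7.2.2]{Ror:Classification}. The only cosmetic difference is that the paper composes further with the quotient onto an ultrapower $A_\cU$ rather than stopping at $\ell_\infty(A)/c_0(A)$; both targets are covered by the cited theorem, and in either case injectivity of the composed map on $D$ follows from simplicity, as you note.
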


It is not known whether the analogous statement holds for $\cQ(A\otimes \cK)$ (see Conjecture~\ref{Conj.Q}). 
 
\subsection*{Acknowledgments} I would like to thank Ping Wong Ng for communicating Zhang's question to me and to Bruce Blackadar, Jamie Gabe, Ilan Hirshberg,   Narutaka Ozawa, G\'abor Szab\'o, and Mikael R\o rdam for helpful remarks on the earlier versions of this paper. 
Thanks to G\'abor Szab\'o for pointing out to an error in proofs of \S\ref{S.Ainfty} and \S\ref{S.Q(A)} that was as embarrassing as it was inconsequential and to Jamie Gabe for improving the conclusion of Theorem~\ref{T.E} by replacing approximate unitary equivalence with unitary equivalence. Last, but not least, I am indebted to the anonymous referee for an extremely useful referee report.

\section{Terminology, notation, and background on property (T)}\label{S.Terminology}
\subsection{Terminology and  notation}We largely follow \cite{Black:Operator}. Consider the Calkin algebra $\cQ(H):=\cB(H)/\cK(H)$. For a non-unital \cstar-algebra $A$ consider the multiplier algebra $\cM(A)$ and the  corona $\cQ(A):=\cM(A)/A$ (also called outer multiplier algebra). Thus $\cQ(H)$ is the same as $\cQ(\cK)$, where~$\cK$ is the algebra of compact operators on a separable, infinite-dimensional Hilbert space, alternatively denoted $\cK(H)$, as needed. 
 In all of these situations we  let $\pi\colon \cM(A)\to \cQ(A)$ denote the quotient map. 
 
 By 
$
F\Subset X
$
we denote the fact that $F$ is a finite subset of a set $X$. 

If $A$ is a \cstar-algebra or a von Neumann algebra, by $Z(A)$ we denote its center.

\subsection{Strongly self-absorbing \cstar-algebras} A unital \cstar-algebra $D$ is said to be \emph{strongly self-absorbing} if $D$ is isomorphic to $D\otimes D$ via an  isomorphism approximately unitarily equivalent to the $^*$-homomorphism $d\mapsto d\otimes 1_D$ and~$D$ is not isomorphic to $\bbC$. All strongly self-absorbing \cstar-algebras are nuclear and simple, and they satisfy $D\cong D^{\bigotimes\bbN}$  (see \cite{ToWi:Strongly}). Known strongly self-absorbing \cstar-algebras are the  Jiang--Su algebra $\cZ$, UHF algebras of infinite type, $\cO_\infty$, tensor products of UHF algebras of infinite type with~$\cO_\infty$, and $\cO_2$. If all nuclear \cstar-algebras satisfy the Universal Coefficient Theorem, UCT (see e.g., \cite{blackadar1998k}) then there are no other separable strongly self-absorbing \cstar-algebras (this is Corollary 6.7 of \cite{tikuisis2017quasidiagonality}). There exist exact \cstar-algebras without the UCT (\cite{skandalis1988notion}) but the question whether there are nuclear examples, or even strongly self-absorbing ones,   is wide open. 

\subsection{Kazhdan's property (T)} This subsection and \S\ref{S.Schur} contain more background than the experts may want to see. 
A discrete group $\Gamma$ has \emph{Kazhdan's property (T)} if there are $F\Subset \Gamma$ and $\varepsilon>0$ such that for every unitary representation $\rho$ of $\Gamma$ on a Hilbert space~$H$, if there is a vector $\xi$ in~$H$ such that\footnote{When dealing only with a Hilbert space~$H$, its norm will be denoted by $\|\cdot\|$. Later on, when we consider the Hilbert space of Hilbert--Schmidt operators, we will use $\|\cdot\|$ for the operator norm on this space and (variously embellished variations on)  $\|\cdot\|_{\textrm{HS}}$ for the Hilbert--Schmidt norm.}  
\begin{equation} \label{Eq.Inv}
\max_{g\in F} \|\rho(g)\xi-\xi\|<\varepsilon\|\xi\|.
\end{equation}
 for all $g\in F$, (such $\xi\in H$  is called \emph{$F,\varepsilon$-invariant}), then there is a unit vector in $H$ that is invariant for $\rho$. The pair $F,\varepsilon$ is called a \emph{Kazhdan pair} for $\Gamma$. 
Suppose $\rho$ is an irreducible representation of $\Gamma$. Then $\rho$ extends to a representation of the full group \cstar-algebra $\cst(\Gamma)$ (see \cite{BrOz:C*} for information on group \cstar-algebras). Let $p_\rho$ denote the central cover of $\rho$ (\cite[Definition~1.4.2]{BrOz:C*}) in the second dual, $\cst(\Gamma)^{**}$. If $\Gamma$ has Kazhdan's property (T) then~$p_\rho$ belongs to $\cst(\Gamma)$ 
 (\cite[Definition 17.2.3 and Theorem 17.2.4]{BrOz:C*}; as pointed out in \cite{BrOz:C*} in the paragraph preceding Definition 17.2.3, this is a consequence of Schur’s Lemma for property (T) groups, a relative to Lemma~\ref{L.Schur}  below). In this case~$p_\rho$ is called the \emph{Kazhdan projection} of $\rho$.  Kazhdan projections associated to inequivalent irreducible representations are orthogonal. See \cite{bekka2008kazhdan}, \cite{BrOz:C*} for more on property~(T) groups.  For more information on Kazhdan projections see \cite[\S 17]{BrOz:C*}, also \cite[3.7.6]{higson2000analytic}.

\subsection{Schur's Lemma for property (T) groups} 
\label{S.Schur} 
This subsection contains a rehashing of (\cite[Lemma~3.7.8]{higson2000analytic}), Lemma~\ref{L.Schur}, which is  included for convenience of readers not familiar with property (T) groups. 

Suppose that $\rho$ is an action of a discrete group $\Gamma$ on a Hilbert space $H$. 
 For us it will be difficult to keep track on the norm of $\xi$, and we therefore say that $\xi\in H$ is \emph{scaled $F,\varepsilon$-invariant}\footnote{Arguably, a better terminology would be non-scaled $F,\varepsilon$-invariant but this sounds rather excessive and after all the scaling can go either way.}
  if \begin{equation} \label{Eq.Scaled} 
 \max_{g\in F}\|\rho(g)\xi-\xi\|<\varepsilon.
 \end{equation} 
Clearly every $\xi$ is scaled $F,\varepsilon$ invariant for any $F$ and $\varepsilon>2\|\xi\|$,  but in this case the estimate given in Lemma~\ref{L.bekka} below is vacuous. 

 Let $q^\rho$ denote the orthogonal projection to the space of invariant vectors, 
\[
H^\rho:=\{\eta\in H\mid \rho(g)\eta=\eta\text{ for all } g\in \Gamma\}. 
\]
The following is \cite[Proposition 12.1.6]{BrOz:C*} (with $\Gamma=\Lambda$) or  \cite[Proposition~1.1.9]{bekka2008kazhdan} (the case when $F$ is compact, and modulo rescaling). 

\begin{lemma} \label{L.bekka} 
Suppose  $\Gamma$ is a group with Kazhdan's property (T) and $F,\varepsilon$ is a Kazhdan pair for $\Gamma$. If $\Gamma$ acts on a Hilbert space $H$ then every scaled~$F,\delta\varepsilon$-invariant vector $\xi$ satisfies $\|\xi-q^\rho \xi\|<\delta$.  \qed \end{lemma}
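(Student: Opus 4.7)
\medskip

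\noindent\textbf{Plan for the proof of Lemma~\ref{L.bekka}.} The plan is to decompose $H$ orthogonally as $H^\rho \oplus (H^\rho)^\perp$, restrict the action to the complement, and then use property (T) in its contrapositive form on this restricted representation. Since $H^\rho$ is clearly $\rho$-invariant, so is $(H^\rho)^\perp$, and the restricted representation $\rho'$ on $(H^\rho)^\perp$ has no nonzero invariant vectors (by definition).

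\medskip

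\noindent First I would translate Kazhdan's condition into a contrapositive quantitative statement about $\rho'$. Since $F,\varepsilon$ is a Kazhdan pair, the restricted representation $\rho'$ cannot possess an $F,\varepsilon$-invariant vector in the sense of \eqref{Eq.Inv}, for otherwise it would possess a unit invariant vector, contradicting $(H^\rho)^\perp \cap H^{\rho'} = \{0\}$. Hence every nonzero $\eta \in (H^\rho)^\perp$ satisfies
\[
\max_{g \in F} \|\rho(g)\eta - \eta\| \geq \varepsilon \|\eta\|.
\]

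\medskip

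\noindent Next I would apply this inequality to $\eta := \xi - q^\rho\xi \in (H^\rho)^\perp$. The key observation is that since $q^\rho\xi$ is fixed by every $\rho(g)$, we have
\[
\rho(g)\eta - \eta = \rho(g)\xi - \xi
\]
for every $g \in \Gamma$, so the defect of $\eta$ equals the defect of $\xi$. Combined with the scaled $F,\delta\varepsilon$-invariance assumption \eqref{Eq.Scaled} on $\xi$, this gives
\[
\max_{g \in F} \|\rho(g)\eta - \eta\| < \delta\varepsilon.
\]

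\medskip

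\noindent Assuming $\eta \neq 0$, chaining the two inequalities yields $\varepsilon \|\eta\| < \delta \varepsilon$, hence $\|\eta\| < \delta$, which is the desired conclusion; the case $\eta = 0$ is trivial. I do not anticipate a real obstacle here: the lemma is essentially a bookkeeping exercise translating the standard statement of property (T) (where the bound is relative to $\|\xi\|$) into the scaled version (where the bound is absolute), and the only mild subtlety is noticing that the defect is insensitive to adding an invariant vector, which is what allows one to pass from $\xi$ to its projection onto $(H^\rho)^\perp$ without any loss.
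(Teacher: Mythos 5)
Your proof is correct and is exactly the standard argument behind the references the paper cites (the lemma is stated with no proof, only pointers to \cite{BrOz:C*} and \cite{bekka2008kazhdan}): restrict to $(H^\rho)^\perp$, use the contrapositive of the Kazhdan-pair property there, and observe that the defect of $\xi-q^\rho\xi$ equals that of $\xi$. Nothing is missing.
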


Suppose that $K$ and $H$ are Hilbert spaces. 
If $H$ is finite-dimensional then let $\tau_H$ denote the normalized tracial state on $\cB(H)$ and consider the normalized Hilbert—Schmidt norm on $\cB(K,H)$, 
\[
\hHS T:=\tau_H(TT^*)^{1/2}.  
\]
This  differs from the non-normalized Hilbert--Schmidt norm usually used in this context (\cite{higson2000analytic}, \cite{BrOz:C*}) by the scaling factor $\dim(H)^{-1}$.
 
Similarly, if $K$ is finite-dimensional let $\tau_K$ denote the normalized tracial state on $\cB(K)$ and 
consider the normalized Hilbert—Schmidt norm on $\cB(K,H)$, 
\[
\kHS T:=\tau_K(TT^*)^{1/2}.  
\]
Note also that we allow for the possibility that only one of  $H$ and $K$ is finite-dimensional. In this case the normalized Hilbert--Schmidt norm associated with the infinite-dimensional Hilbert space is of course not defined. We will encounter this situation later on.

It is a good moment to get a few easy facts out of the way. If $T\in \cB(K,H)$ then  $\max(\kHS T,\hHS T)\leq \|T\|$ (the right-hand side being the operator norm). Also, if $\dim K\leq \dim  H$, then $\kHS T\geq \hHS T$, more precisely 
\[
\kHS T=\frac{\dim(H)}{\dim(K)}\hHS T.
\]  
We will contract nested subscripts and write   $\nHS \cdot $  for both  $\|\cdot\|_{\textrm{HS},H_n}$ and $\|\cdot\|_{\textrm{HS},K_n}$, similarly $\tau_n$ for both $\tau_{H_n}$ and $\tau_{K_n}$ (spaces $H_n$ and~$K_n$ will never appear in the same context). 

Lemma~\ref{L.Schur} below is a minor variation on Schur's Lemma for property (T) groups (\cite[Lemma~3.7.8]{higson2000analytic}).

\begin{lemma} \label{L.Schur}  Suppose  $\Gamma$ is a property (T) group with Kazhdan pair $F,\varepsilon$ and   $\rho_j$ is a  representation of $\Gamma$ on $K_j$ for $j=1,2$ such that $K_1$ is finite-dimensional and  some $T \in \cB(K_1,K_2)$ satisfies 
	\[
	\max_{g\in F} \|T\rho_1(g)-\rho_2(g)T\|<\varepsilon\delta.
	\] 
\begin{enumerate}
	\item \label{1.Schur} If $K_1=K_2$ and $\rho_1=\rho_2$ is irreducible, then  $\oneHS{T-\tau_1(T)}<\delta$. 
	\item \label{2.Schur} If $\rho_1$, $\rho_2$ have no isomorphic subrepresentations, then $\oneHS T<\delta$. 
\end{enumerate}
\end{lemma}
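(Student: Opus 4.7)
The plan is to view both parts of the lemma as consequences of Lemma~\ref{L.bekka} applied to the natural conjugation representation of $\Gamma$ on the Hilbert space $\cB(K_1,K_2)$ equipped with the normalized Hilbert--Schmidt inner product $\langle S,T\rangle:=\tau_{K_1}(S^*T)$.  Because $K_1$ is finite-dimensional, every bounded operator $K_1\to K_2$ is automatically in this Hilbert space, and its 1-HS norm is dominated by the operator norm (since $\oneHS A^2=\tau_1(A^*A)\leq\|A^*A\|_{\text{op}}=\|A\|_{\text{op}}^2$).  The conjugation action $g\cdot S:=\rho_2(g)S\rho_1(g)^*$ is unitary for this inner product: invariance of the normalized trace on $K_1$ combined with unitarity of $\rho_1(g)$ and $\rho_2(g)$ gives $\oneHS{\rho_2(g)S\rho_1(g)^*}=\oneHS S$.

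The hypothesis on the commutator translates directly into scaled invariance for this representation.  Using unitarity of $\rho_1(g)$ on the right,
\[
\oneHS{\rho_2(g)T\rho_1(g)^*-T}=\oneHS{(\rho_2(g)T-T\rho_1(g))\rho_1(g)^*}=\oneHS{\rho_2(g)T-T\rho_1(g)}\leq\|\rho_2(g)T-T\rho_1(g)\|_{\text{op}}<\varepsilon\delta
\]
for every $g\in F$.  Thus $T$ is scaled $F,\varepsilon\delta$-invariant for the conjugation representation of $\Gamma$, and Lemma~\ref{L.bekka} yields $\oneHS{T-q^\rho T}<\delta$, where $q^\rho$ is the orthogonal projection onto the subspace of \emph{invariant} vectors, i.e., the space of intertwiners $S\colon K_1\to K_2$ satisfying $\rho_2(g)S=S\rho_1(g)$ for all $g\in\Gamma$.

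It remains to identify $q^\rho T$ in each case via classical Schur's lemma.  In case \eqref{2.Schur}, the assumption that $\rho_1$ and $\rho_2$ share no isomorphic subrepresentations forces the space of intertwiners to be trivial, so $q^\rho T=0$ and $\oneHS T<\delta$.  In case \eqref{1.Schur}, with $\rho_1=\rho_2$ irreducible on $K_1=K_2$, the space of intertwiners is $\bbC\cdot I$; since $\oneHS I^2=\tau_1(I)=1$ and $\langle I,T\rangle=\tau_1(T)$, the orthogonal projection of $T$ onto this line is $\tau_1(T)\cdot I$, giving $\oneHS{T-\tau_1(T)}<\delta$ as required.

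The argument is essentially bookkeeping; the only delicate point is reconciling the \emph{normalized} Hilbert--Schmidt norm (which is used here so that $\oneHS\cdot\leq\|\cdot\|_{\text{op}}$, and so that the projection onto $\bbC\cdot I$ produces the normalized trace rather than the unnormalized one) with the fact that the conjugation action remains isometric in this norm even when $K_2$ is infinite-dimensional.  This is exactly where the asymmetry of the assumption (only $K_1$ is required to be finite-dimensional) is absorbed.
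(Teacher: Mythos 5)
Your proof is correct and follows essentially the same route as the paper: pass to the conjugation representation $\sigma(g)(S)=\rho_2(g)S\rho_1(g)^*$ on $\cB(K_1,K_2)$ with the normalized Hilbert--Schmidt norm, apply Lemma~\ref{L.bekka} to get $\oneHS{T-Q(T)}<\delta$, and identify the invariant subspace via Schur's lemma. The only (immaterial) divergence is in case \eqref{2.Schur}, where you invoke the standard fact that unitary representations with no isomorphic subrepresentations admit no nonzero intertwiners, while the paper decomposes $\rho_1$ into irreducibles using finite-dimensionality of $K_1$ and handles each summand separately.
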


\begin{proof} Since $\oneHS\cdot\leq \|\cdot\|$, we have
\begin{equation}\label{Eq.HS-inequality}
\max_{g\in F} \oneHS{T\rho_1(g)-\rho_2(g)T}<\varepsilon\delta
\end{equation}
and we will show that this inequality implies both conclusions.  
Define an action  $\sigma=\sigma_{\rho_1,\rho_2}$ of $\Gamma$ on  $\cB(K_1,K_2)$ by letting, for $T\in \cB(K_1,K_2)$, 
\[
\sigma(g)(T):=\rho_2(g) T \rho_1 (g)^*. 
\]
A calculation shows that  for all $g$ and $h$ in $\Gamma$ we have $\sigma(g)\in \cB(K_1,K_2)$ and   $\sigma(gh)=\sigma(g)\circ \sigma(h)$. Thus $\sigma$ is  a representation  of $\Gamma$ on $\cB(K_1,K_2)$. Consider the space $\cB(K_1,K_2)^\sigma$ of   vectors invariant for $\sigma$, and let $Q$ be the projection onto this space. 

Since $F,\varepsilon$ is a Kazhdan pair for  $\Gamma$,  by \eqref{Eq.HS-inequality} and  Lemma~\ref{L.bekka} we have 
\begin{equation}\label{Eq.Q}
\oneHS {T-Q(T)}<\delta. 
\end{equation}
Now consider the two specific cases from the statement of the lemma. 

\eqref{1.Schur} If $\rho_1=\rho_2$ and it is irreducible, then (identifying scalars with  scalar matrices), Schur's Lemma (\cite[Lemma~3.7.7]{higson2000analytic}) implies $\cB(K_1,K_1)^\sigma=\bbC$ and $Q(T)=\tau_1(T)$. Therefore \eqref{Eq.Q} reduces to  $\oneHS{T-\tau_1(T)}<\delta$.

\eqref{2.Schur} Suppose for a moment that $\rho_1$ is irreducible and not isomorphic to a subrepresentation of $\rho_2$.  Then Schur's Lemma (\cite[Lemma~3.7.7]{higson2000analytic}) implies that  there are no nontrivial intertwiners for $\rho_1$ and $\rho_2$, in other words   $\cB(K_1,K_2)^\sigma=\{0\}$. Hence $Q(T)=0$ and \eqref{Eq.Q} reduces to  $\oneHS T<\delta$. 

Suppose $\rho_1$ is not  irreducible.  Since $K_1$ is finite-dimensional,~$\rho_1$ is a direct sum of irreducible representations. By applying the previous paragraph to each one of these irreducible representations, we have $\cB(K_1,K_2)^\sigma=\{0\}$ and  $Q(T)=0$ and therefore $\oneHS T<\delta$.    \end{proof}

\section{Wassermann's \cstar-algebra $\cW(\Gamma)$}\label{S.Gamma.A}
Suppose $\Gamma$ is a property (T)  group with infinitely many inequivalent irreducible representations on finite-di\-men\-sio\-nal Hilbert spaces (e.g., a residually finite property (T) group such as $\SL_3(\bbZ)$).  Fix a Kazhdan pair,  $F\Subset \Gamma$ and $\varepsilon>0$. Let $\rho_n$, for $n\in \bbN$, be an enumeration of irreducible representations of $\Gamma$ on finite-dimensional Hilbert spaces. Since there are only finitely many inequivalent representations of $\Gamma$ on every fixed finite-dimensional Hilbert space $H$ (\cite[Corollary~3]{wassermann1991c}), we may choose the enumeration  so that $\dim(H_n)\leq \dim(H_{n+1})$ for all $n$ and  $\dim(H_n)\to \infty$ as $n\to \infty$. For each $n$ let $p_n$ be the Kazhdan projection associated with $\rho_n$ in $\cst(\Gamma)$. 
 Each representation $\rho_n$ of $\Gamma$ on $H_n$ uniquely extends to a representation of the full group algebra  $\cst(\Gamma)$ on the same space. We will slightly abuse the notation and denote the latter by~$\rho_n$.  
Let $\rho=\bigoplus_n \rho_n$, and $H:=\bigoplus_n H_n$. Then $H$ is an infinite-dimensional, separable Hilbert space and $\rho\colon \cst(\Gamma)\to \cB(H)$ is a unital $^*$-homomorphism such that $\rho^{-1}(\cK(H))$ includes  all $p_n$. 

\begin{definition}\label{D.W(Gamma)} If $\Gamma$  is a property (T) group with infinitely many irreducible representations $\rho_n\colon \Gamma\to \cB(H_n)$, for $n\in \bbN$,  on finite-dimensional Hilbert spaces and $\rho=\bigoplus_n \rho_n$ is considered as a representation of the full group algebra $\cst(\Gamma)$ as in  the previous paragraph,  then with 
$\pi\colon \cB(H)\to \cQ(H)$ denoting the quotient map let 
\begin{equation*}\label{Eq.W(Gamma)}
	\cW(\Gamma):=\pi\circ \rho[\cst(\Gamma)]. 
\end{equation*}
With $p_n$ denoting the Kazhdan projection associated with $\rho_n$, let
\begin{equation*}
	\calD:=\{T\in \cB(H)\mid p_m T p_n \neq 0\text{ implies } m=n\}. 
\end{equation*}
\end{definition}

In other words, $\calD=\prod_n p_n \cB(H)p_n=\prod_n \cB(H_n)$.  

Had we chosen an orthonormal basis of $H$ whose subsets span the $H_n$'s, then~$\calD$ would be a special case of a von Neumann algebra of the form~$\calD[{\mathbf E}]$ studied in \cite[\S 9.7.1]{Fa:STCstar}. These von Neumann algebras are of central importance both in author's construction of outer automorphisms of $\cQ(H)$ (\cite[\S 17.1]{Fa:STCstar}---the original construction of outer automorphisms given in \cite{PhWe:Calkin} uses separable \cstar-subalgebras of $\cQ(H)$ instead, and one of the key lemmas used in this proof plays a role in the proof of Corollary~\ref{C1} below) and in the proof that all automorphisms of $\cQ(H)$ are inner (\cite{Fa:All}, \cite[\S 17.2--17.8]{Fa:STCstar}). 

Note that the center $Z(\calD)$ of $\calD$ is equal to 
\begin{equation}\label{Eq.Z(D)}
\textstyle Z(\calD)=\{\sum_n \lambda_n p_n\mid (\lambda_n)\in \ell_\infty\}
\end{equation}
and that $\pi[Z(\calD)]\subseteq \cQ(H)\cap \cW(\Gamma)'$ because each $p_n$ is invariant for $\rho(T)$ for all $T\in \cst(\Gamma)$. 


Since for a scalar $\lambda$ we have $\|\lambda p_n\|=|\lambda|=\nHS{\lambda p_n}$,\footnote{A clarification for the readers who skipped \S\ref{S.Schur}: this is the normalized Hilbert--Schmidt norm.} every $T\in Z(\calD)$ satisfies 
\[
\|T\|=\sup_n \nHS{Tp_n}. 
\]
The following convention, going back at least to \cite{We:Set}, will considerably simplify the notation, and it will not lead to confusion if the reader keeps it in mind.

\begin{convention} \label{Conv}
In Proposition~\ref{P.linfty} and elsewhere we denote the elements of the Calkin algebra by dotted letters, with understanding that if $\dot a\in \cQ(H)$ then $a\in \cB(H)$ is a lift of $\dot a$. 
\end{convention}

\begin{proposition}\label{P.linfty} Suppose that $\Gamma$,  $\cW(\Gamma)$, and $\calD$ are as in Definition~\ref{D.W(Gamma)}. 
Then, with $A:=\pi[Z(\calD)]$,  there is  a $^*$-ho\-mo\-mor\-phism 
\[
\Psi\colon \cQ(H)\cap \cW(\Gamma)'\to A
\]	
whose restriction to $A$ is the identity map.  Thus, if $I:=\ker(\Psi)$ then the short exact sequence
\[
0\to I \to \cQ(H)\cap \cW(\Gamma)' \toPsi A\to 0
\]
splits. 
\end{proposition}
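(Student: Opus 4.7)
The plan is to define, for any lift $a\in\cB(H)$ of $\dot a\in\cQ(H)\cap\cW(\Gamma)'$,
\[
\Psi(\dot a):=\pi\Bigl(\sum_n\tau_n(p_n a p_n)\,p_n\Bigr).
\]
The sum lies in $Z(\calD)$ and has norm at most $\|a\|$, and replacing $a$ by $a+K$ with $K\in\cK(H)$ alters the sum by a sequence tending to $0$ (because $\|p_n K p_n\|\to 0$ for compact $K$), which vanishes after $\pi$. Hence $\Psi$ is a well-defined unital linear $^*$-preserving contraction. For $\dot c=\pi(\sum_n\lambda_n p_n)\in A$ one has $\tau_n(p_n c p_n)=\lambda_n$, and therefore $\Psi|_A=\mathrm{id}$.

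The heart of the proof is that $\Psi$ is multiplicative. For $\dot a,\dot b\in\cQ(H)\cap\cW(\Gamma)'$ with lifts $a,b$, decompose
\[
p_n ab\,p_n=(p_n a p_n)(p_n b p_n)+p_n a(1-p_n)b\,p_n,
\]
and apply $\tau_n$. The commutation hypothesis $\dot a\in\cW(\Gamma)'$ gives $[\rho(g),a]\in\cK(H)$ for every $g\in\Gamma$, so $\|p_n[\rho(g),a]p_n\|\to 0$; Lemma~\ref{L.Schur}(1) applied to the irreducible $\rho_n$ therefore yields $\nHS{p_n a p_n-\tau_n(p_n a p_n)p_n}\to 0$, and together with its analogue for $b$ and the $L^2(\tau_n)$ Cauchy--Schwarz inequality, this handles the diagonal summand: $\tau_n((p_n a p_n)(p_n b p_n))-\tau_n(p_n a p_n)\tau_n(p_n b p_n)\to 0$.

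The crux is the off-diagonal summand. Put $T_n:=p_n a(1-p_n)$ and $S_n:=(1-p_n)b\,p_n$; Cauchy--Schwarz in $L^2(\tau_n)$ gives
\[
|\tau_n(T_n S_n)|\leq\tau_n(T_nT_n^*)^{1/2}\tau_n(S_n^*S_n)^{1/2},
\]
so it suffices to show $\tau_n(T_nT_n^*)\to 0$; the bound for $S_n$ follows symmetrically by applying the same argument to $b^*$. To this end, consider the unitary representation $\sigma_n$ of $\Gamma$ on the Hilbert space of (automatically Hilbert--Schmidt) operators $(1-p_n)H\to H_n$ with inner product $\langle R,T\rangle:=\tau_n(RT^*)$, given by $\sigma_n(g)T:=\rho_n(g)T\rho(g)^*$. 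A direct calculation, using that $p_n$ is central in $\rho(\cst(\Gamma))$, shows $\sigma_n(g)T_n-T_n=p_n[\rho(g),a](1-p_n)\rho(g)^*$, whose norm in this Hilbert space is bounded by $\|p_n[\rho(g),a]\|$, and this tends to $0$ as $n\to\infty$ because $[\rho(g),a]\in\cK(H)$ and $p_n\to 0$ strongly. Applying Lemma~\ref{L.bekka} to $\sigma_n$ with the Kazhdan pair $F,\varepsilon$, matters reduce to showing that the space of $\sigma_n$-invariant vectors is trivial; but such an invariant vector is an intertwiner $\rho|_{(1-p_n)H}=\bigoplus_{m\neq n}\rho_m\to\rho_n$, which vanishes by Schur's lemma since no $\rho_m$ with $m\neq n$ is equivalent to $\rho_n$. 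The principal obstacle is setting up this Hilbert--Schmidt Kazhdan argument correctly; once in place, $\Psi$ is a $^*$-homomorphism, and the splitting of the short exact sequence follows from $\Psi|_A=\mathrm{id}$.
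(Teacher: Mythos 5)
Your proposal is correct and follows essentially the same route as the paper: the map $\Psi$ is the same conditional-expectation formula $\dot a\mapsto\pi(\sum_n\tau_n(p_nap_n)p_n)$, and multiplicativity rests on the same two estimates, namely that the diagonal corner $p_nap_n$ is Hilbert--Schmidt-close to a scalar multiple of $p_n$ (Schur's Lemma for property (T) groups, Lemma~\ref{L.Schur}\eqref{1.Schur}) and that the off-diagonal corner is Hilbert--Schmidt-small (the Kazhdan almost-invariant-vector estimate applied to a space of intertwiners). The only, harmless, difference is organizational: the paper first proves $\nHS{Tp_n-\tau_n(p_nTp_n)p_n}\to 0$, splitting the off-diagonal part into a finite-rank head and a tail handled by Lemma~\ref{L.Schur}\eqref{2.Schur}, whereas you attack $\tau_n(p_nabp_n)$ directly via Cauchy--Schwarz and dispose of the whole corner $p_na(1-p_n)$ at once by combining $\|p_n[\rho(g),a]\|\to 0$ with Lemma~\ref{L.bekka}.
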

Prior to proving this proposition, we show that Theorem~\ref{T.B} is its immediate consequence.

\begin{proof}[Proof of Theorem~\ref{T.B}] Let $\Gamma$ and $\cW(\Gamma)$ be as in the first paragraph of~\S\ref{S.Gamma.A}. We will prove that $B:=\cW(\Gamma)$ is as required.   Let $C$ be a unital and simple \cstar-algebra, and suppose that $\Phi$ 
	is a unital $^*$-homomorphism from $C$ into $\cQ(H)\cap \cW(\Gamma)'$. If $\Psi$ is as constructed in Proposition~\ref{P.linfty}, then (since $C$ is simple), $\Psi\circ \Phi$ is a unital, and therefore injective, *-homomorphism from~$C$ into the range of~$\Psi$.  However, the range of $\Psi$ is abelian, and therefore $C\cong \bbC$.  
\end{proof}

The proof of Proposition~\ref{P.linfty} commences with a sequence of statements, some of them self-explanatory,  phrased  as a proposition.  

\begin{proposition}\label{P.W(Gamma)}
Suppose that $\Gamma$, $\rho\colon \cst(\Gamma)\to \cB(H)$,  $\cW(\Gamma)$, and $\calD$ are as in Definition~\ref{D.W(Gamma)}. Then 
we have the following. 
\begin{enumerate}
	\item \label{1.P.W(Gamma)} $\rho[\cst(\Gamma)]''=\calD$. 
	 \item \label{2.P.W(Gamma)} $\rho[\cst(\Gamma)]'=\calD’=\wst(p_n: n\in \bbN) =Z(\calD)$. 
	 \item \label{3.P.W(Gamma)} A conditional expectation $E\colon \cB(H)\to \calD'$ is defined by ($\tau_n$ is the normalized trace on $\cB(H_n)$)
	 \begin{equation}\label{eq.E}
	 E(T)=\sum_n \tau_n(p_n T p_n) p_n. 
	 \end{equation}
	\item \label{4.P.W(Gamma)} $\pi[\calD']=\cQ(H)\cap \pi[\calD]'\subseteq \cQ(H)\cap \cW(\Gamma)'$. 
	\item \label{5.P.W(Gamma)} There is a conditional expectation $\dot E\colon \cQ(H)\to \cQ(H)\cap \pi[\calD]'$ such that $\dot E(\dot T)=\pi(E(T))$ for all $T\in \cB(H)$. 
\item  \label{6.P.W(Gamma)} Every $T$ such that $\dot T\in \cQ(H)\cap \cW(\Gamma)'$	 satisfies
	\begin{align*}
	\label{Eq.Tpn} \lim_n \nHS{T p_n -\tau_n(p_n T p_n)p_n}&=0, \\
\lim_n \nHS{T p_n -E(T)p_n}&=0, \text{ and}\\
\lim_n \nHS{p_n T -E(T)p_n}&=0.
\end{align*}
\end{enumerate}
\end{proposition}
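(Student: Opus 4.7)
The plan is to handle parts (1)--(3) via Schur's Lemma and standard conditional-expectation bookkeeping, to deduce parts (4) and (5) from the well-definedness of an induced quotient map, and to prove (6) --- the technical core --- as a double application of Lemma~\ref{L.Schur}.

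For (1) and (2), the Kazhdan projections $p_n$ lie in $\cst(\Gamma)$, hence in $\rho[\cst(\Gamma)]$, so $Z(\calD)=\wst(p_n:n\in\bbN)$ is contained in $\rho[\cst(\Gamma)]'$; since each $\rho_n$ is irreducible and the $\rho_n$'s are pairwise inequivalent, classical Schur forces any $S$ commuting with $\rho[\cst(\Gamma)]$ to satisfy $p_mSp_n=0$ for $m\neq n$ and $p_nSp_n\in\bbC p_n$, giving the reverse inclusion. Hence $\rho[\cst(\Gamma)]'=Z(\calD)$ and taking commutants yields $\rho[\cst(\Gamma)]''=Z(\calD)'=\prod_n\cB(H_n)=\calD$. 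For (3), the bound $|\tau_n(p_nTp_n)|\leq\|T\|$ gives SOT-convergence of the series defining $E(T)$, and positivity together with the orthogonality of the $p_n$'s makes routine the verification that $E$ is the $\calD'$-bimodular conditional expectation onto $\calD'=Z(\calD)$.

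For (4) and (5), I would first verify that $E$ preserves $\cK(H)$: if $T\in\cK(H)$ then $\|Tp_n\|\to 0$ as $p_n\to 0$ SOT, so $|\tau_n(p_nTp_n)|\to 0$ and hence $E(T)=\sum_n\lambda_np_n$ with $\lambda_n\to 0$ is compact. This makes $\dot E(\dot T):=\pi(E(T))$ well-defined on $\cQ(H)$, with range in $\pi[\calD']$; the containment $\pi[\calD']\subseteq\cQ(H)\cap\pi[\calD]'\subseteq\cQ(H)\cap\cW(\Gamma)'$ follows from $Z(\calD)\subseteq\calD'$ and $\cW(\Gamma)\subseteq\pi[\calD]$. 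The equality $\pi[\calD']=\cQ(H)\cap\pi[\calD]'$ and the fact that $\dot E$ is an honest conditional expectation onto that algebra both reduce to the claim that $T-E(T)\in\cK(H)$ for every lift $T$ of an element of $\cQ(H)\cap\pi[\calD]'$; I would extract this from the full commutation hypothesis $[T,a]\in\cK(H)$ for all $a\in\calD$ by combining it with the Hilbert--Schmidt estimates of (6) to upgrade column-wise smallness to norm compactness.

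The heart of the proof is (6). Fix $\dot T\in\cQ(H)\cap\cW(\Gamma)'$ and $g\in F$. Then $K_g:=T\rho(g)-\rho(g)T$ is compact, so $\|K_gp_n\|\to 0$ (since $p_n\to 0$ SOT), which yields $\nHS{p_nK_gp_n}\leq\|K_gp_n\|\to 0$ and the same estimate for $\nHS{(1-p_n)K_gp_n}$. Using $\rho(g)p_n=\rho_n(g)$ one computes
\[
p_nK_gp_n=(p_nTp_n)\rho_n(g)-\rho_n(g)(p_nTp_n),
\]
so Lemma~\ref{L.Schur}(1), applied to the irreducible representation $\rho_n$ on $H_n$, yields $\nHS{p_nTp_n-\tau_n(p_nTp_n)p_n}\to 0$. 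Setting $S_n:=(1-p_n)Tp_n\in\cB(H_n,H_n^\perp)$, the analogous calculation produces
\[
S_n\rho_n(g)-\rho|_{H_n^\perp}(g)\,S_n=(1-p_n)K_gp_n,
\]
and since $\rho_n$ has no irreducible subrepresentation in common with $\rho|_{H_n^\perp}=\bigoplus_{m\neq n}\rho_m$, Lemma~\ref{L.Schur}(2) gives $\nHS{(1-p_n)Tp_n}\to 0$. The triangle inequality combines the two estimates into the first limit of (6); the second is the same identity after substituting $E(T)p_n=\tau_n(p_nTp_n)p_n$, and the third follows by applying the first to $T^*$ (whose image also commutes with $\cW(\Gamma)$) and using the invariance of the normalized Hilbert--Schmidt norm under adjoints, together with $p_nE(T)^*=E(T^*)p_n$.

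I expect the main obstacle to be the fixed-point step underlying (4) and (5): the column-wise Hilbert--Schmidt conclusions of (6) do not by themselves deliver operator-norm compactness of $T-E(T)$, so extracting it requires careful use of the stronger hypothesis that $T$ commutes modulo compacts with every element of $\calD$, not only with the $p_n$'s. Some additional bookkeeping between the two normalized Hilbert--Schmidt norms $\hHS{\cdot}$ and $\kHS{\cdot}$ will also be needed whenever one of the Hilbert spaces involved is infinite-dimensional.
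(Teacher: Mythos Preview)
Your arguments for parts (1)--(3), (5), and (6) are correct and essentially match the paper's. For (6) your route is in fact slightly more direct: you use $\|K_g p_n\|\to 0$ (since $K_g$ is compact and $p_n\to 0$ strongly) and apply Lemma~\ref{L.Schur}(2) in one shot to the full off-diagonal block $(1-p_n)Tp_n$, whereas the paper first fixes $n_0$ so that the commutator is uniformly small beyond $p_{n_0}$, applies Lemma~\ref{L.Schur}(2) to $p_Y T p_n$ with $Y=[n_0,\infty)\setminus\{n\}$, and disposes of $p_{[0,n_0)}Tp_n$ separately via finite rank. Your worry about juggling $\hHS{\cdot}$ and $\kHS{\cdot}$ is unnecessary: Lemma~\ref{L.Schur} only asks that $K_1$ be finite-dimensional and works entirely in $\oneHS{\cdot}=\nHS{\cdot}$, so the infinite-dimensionality of $H_n^\perp$ is already accommodated.

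The genuine gap is (4), exactly where you flagged it. The equality $\pi[\calD']=\cQ(H)\cap\pi[\calD]'$ says that every $T$ with $[T,a]\in\cK(H)$ for all $a\in\calD$ lies in $\calD'+\cK(H)$; this is precisely Johnson--Parrott's property~$P_1$ for the von Neumann algebra $\calD=\prod_n\cB(H_n)$, and the paper obtains it by citing their results directly (each corner $\calD p_n$ is a finite type~I factor, so every derivation into $\cK(H)$ is inner, and Johnson--Parrott's Theorem~3.4 and Lemma~1.4 assemble these into property~$P_1$). Your proposed route---combine the Hilbert--Schmidt estimates of (6) with the stronger commutation hypothesis---does not close the gap: part (6) yields only $\nHS{(T-E(T))p_n}\to 0$, which is far weaker than operator-norm compactness of $T-E(T)$, and bridging that distance is exactly the nontrivial content of the Johnson--Parrott theorem. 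You should either cite it, as the paper does, or reproduce their derivation argument for this specific~$\calD$; there is no shortcut through (6).
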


\begin{proof} 
\eqref{1.P.W(Gamma)} For every $n$, $\rho_n$ is irreducible and $H_n$ is finite-dimensional, and therefore $p_n \rho[\cst(\Gamma)]p_n=\rho_n[\cst(\Gamma)] =\cB(H_n)$ for all $n$.  Since the $\rho_n$'s are pairwise inequivalent, there are no nontrivial intertwiners  and by a consequence of the Glimm--Kadison transitivity theorem (\cite[Proposition~3.5.2]{Fa:STCstar}) we have   
\[
\rho[\cst(\Gamma)]''=\prod_n \cB(H_n)=\calD.
\] 

\eqref{2.P.W(Gamma)} By \eqref{1.P.W(Gamma)},  $\rho[\cst(\Gamma)]'=\calD'=\wst(p_n: n\in \bbN)$ and $Z(\calD)=\calD\cap \calD’=\calD’$. 

\eqref{3.P.W(Gamma)} Follows from \eqref{2.P.W(Gamma)}, since $\calD=\prod_n \cB(H_n)$. 

\eqref{4.P.W(Gamma)}
We first  prove that
$\pi[\calD']=\cQ(H)\cap \pi[\calD]'$ follows from the results in~\cite{JohPar}. 
Let $H$ be a Hilbert space and let $\calD$ be a von Neumann algebra acting on $H$. Property $P_1$ (\cite[p. 39]{JohPar}) asserts that if $b\in \cB(H)$ and $ab-ba\in \cK(H)$ for all $a\in \calD$, then $b\in \calD+\cK(H)$.   We need to verify that~$\calD$ has property~$P_1$. 

 For each $n$, $p_n$ belongs to the center of $\calD$ and $\calD p_n=\cB(H_n)$ is a finite type I factor. By \cite[Lemma 3.1]{JohPar}, every derivation of $\calD p_n$ into $\cK(H)$ is inner, and (in the terminology of \cite[p. 39]{JohPar}), $\calD p_n$ has property $P_2$.  \cite[Theorem~3.4]{JohPar} now implies that every derivation of $\calD$ is inner, and by \cite[Lemma~1.4]{JohPar} $\calD$  has property $P_1$ as well, as required.

We proceed to prove $\pi[\calD']\subseteq \cQ(H)\cap \cW(\Gamma)'$. Since $\calD=\rho[\cst(\Gamma)]''$, we have $\pi[\calD]\supseteq \pi[\cst(\Gamma)]=\cW(\Gamma)$. Therefore  
$
\cQ(H)\cap \cW(\Gamma)'\supseteq \cQ(H)\cap \pi[\calD]'$.

\eqref{5.P.W(Gamma)} Since $E$ as defined in \eqref{eq.E} sends compact operators to compact operators, it defines a conditional expectation $\dot E\colon \cQ(H)\to \cQ(H)\cap \pi[\calD]'$ by $\dot E(\dot T)=\pi(E(T))$, as required.

\eqref{6.P.W(Gamma)}
		For $Y\subseteq \bbN$ it will be convenient to write 
	\[
	H_Y:=\bigoplus_{m\in Y} H_m
	\]
	and let $\rho_Y:=\bigoplus_{m\in Y} \rho_m$, so that $\rho_Y$ is a   representation of $\cst(\Gamma)$ on $H_Y$. 
	 Finally let (the right-hand side is strongly convergent) 
	 \[
	 p_Y:=\sum_{m\in Y} p_m.
	 \] 
	Fix $T\in \cB(H)$ such that $\dot T\in \cQ(H)\cap \cW(\Gamma)'$. Then $\rho(g)T-T\rho(g)$ is compact for all $g\in \Gamma$. We can therefore fix $0<\delta<1/5$ and~$n_0$ such that for all $g\in F$ we have 
	\begin{equation*}
		\textstyle \| (\rho(g)T-T\rho(g))(1-\sum_{m<n_0} p_m)\|<\delta\varepsilon 
	\end{equation*}
and this implies
\begin{equation}\label{Eq.T}
\|p_X (\rho(g) T\rho(g)^* - T)p_Y\|<\delta\varepsilon
\end{equation}
for all $g\in F$ and all  $X$ and  $Y$ included in $[n_0,\infty)$. 
	Write $T\sim_\varepsilon S$ for $\|T-S\|<\varepsilon$ (this is the operator norm).  By  \eqref{Eq.T},  every $Y\subseteq [n_0,\infty)$ and every $n\geq n_0$ satisfy
	\begin{equation}\label{Eq.rhoY}
		\rho_Y(g) p_Y T p_n \rho_n (g)^*=p_Y \rho(g)T \rho(g)^* p_n\sim_{\delta\varepsilon} p_Y T p_n. 
	\end{equation}
	Since all  $\rho_j$ are inequivalent and irreducible,  if $n\notin Y$ then $\rho_n$ and $\rho_Y$ have no isomorphic subrepresentations.   By \eqref{Eq.rhoY} and  Lemma~\ref{L.Schur} \eqref{2.Schur}, 
	\begin{equation}\label{Eq.pYTpn}
		\nHS{p_Y T p_n}<\delta. 
	\end{equation}
	On the other hand, since $p_{[0,n_0)} T$ has finite rank, there is $n_1>n_0$ such that  all $n\geq n_1$  satisfy
	\begin{equation}\label{Eq.p0n0Tpn}
		\nHS{p_{[0,n_0)} T p_n}<\delta. 
	\end{equation}
	By \eqref{Eq.p0n0Tpn} and \eqref{Eq.pYTpn} applied to $Y:=[n_1,\infty)\setminus\{n\}$, every $n\geq n_1$ satisfies $\nHS{p_{\bbN\setminus \{n\}} T p_n}<2\delta$. Equivalently, 
	\begin{equation}\label{Eq.pnpn}
		\nHS {Tp_n- p_n T p_n}<2\delta. 
	\end{equation}
	On the other hand, since $\rho_n$ is irreducible, Lemma~\ref{L.Schur} \eqref{1.Schur} implies  
	\[
	\nHS{ p_n T p_n -\tau_n(p_n T p_n) p_n}<\delta
	\]
	which together with \eqref{Eq.pnpn} implies 
	\begin{equation}\label{Eq.Tpndelta}
		\nHS{T p_n -\tau_n(p_n T p_n)p_n}<3\delta. 
	\end{equation}
Since $\delta>0$ was arbitrary and \eqref{Eq.Tpndelta} holds for every large enough $n$, this completes the proof of 
	$
	\label{Eq.Tpn} \lim_n \nHS{T p_n -\tau_n(p_n T p_n)p_n}=0$, which is 
the first equation in \eqref{6.P.W(Gamma)}.  The second equation,
	$\lim_n \nHS{T p_n -E(T)p_n}=0$,
 follows since $E(T)=\sum_n \tau_n (p_n T p_n)p_n$. The third equation is obtained by replacing~$T$ with $T^*$ in the second.  
\end{proof}

\begin{proof}[Proof of Proposition~\ref{P.linfty}] With $\Gamma$, $\cW(\Gamma)$ as in Definition~\ref{D.W(Gamma)}, 
 let $\Psi$ be the restriction of $\dot E$ as in Proposition~\ref{P.W(Gamma)} \eqref{5.P.W(Gamma)} to $\cQ(H)\cap \cW(\Gamma)'$ and let 
 \[
 A:=\pi[Z(\calD)].
 \] 
 Since \eqref{2.P.W(Gamma)}  of Proposition~\ref{P.W(Gamma)} implies that $Z(\calD)=\calD'=\wst(p_n: n\in \bbN)$, it is isomorphic to $\ell_\infty(\bbN)$, hence 
 $A$ is isomorphic to $\ell_\infty(\bbN)/c_0(\bbN)$.  In particular,~$A$ is abelian, and, by \eqref{4.P.W(Gamma)} of Proposition~\ref{P.W(Gamma)},  included in $\cQ(H)\cap \cW(\Gamma)'$. 

	Fix $T\in \cB(H)$ such that $\dot T$ is in the domain of $\Psi$.	Being a restriction of~$\dot E$, $\Psi$ satisfies 
	\[
	\textstyle\Psi(\dot T):=\pi(\sum_n \tau_n(p_n T p_n) p_n). 
	\]
	It remains to prove that $\Psi$ is a $^*$-homomorphism. Since it is a conditional expectation, it suffices to check that it is multiplicative. This will be proven by multiple uses of Proposition~\ref{P.W(Gamma)} \eqref{6.P.W(Gamma)}.

	If $\dot S$ and $\dot T$ belong to $\cQ(H)\cap \cW(\Gamma)'$, then for every $n$ and scalars $\eta$ and~$\lambda$ we have (using $\nHS{ab}\leq \|a\|\nHS b$) 
	\begin{align*}
	\nHS{STp_n -\eta\lambda p_n}&\leq 
	\nHS{ST p_n - S\lambda p_n}+
	\nHS{S \lambda  p_n - \eta\lambda p_n}\\
	&\leq \|S\|\nHS{T p_n -\lambda p_n}
	+|\lambda|\nHS{ S p_n -\eta p_n}. 
	\end{align*}
	Thus if $E(T)=\sum_n \lambda_n p_n$ and $E(S)=\sum_n \eta_n p_n $, then the previous formula together with  the first equation of Proposition~\ref{P.W(Gamma)} \eqref{6.P.W(Gamma)} applied to $S$ and $T$ implies
	\(
	\nHS{ST p_n -\eta_n \lambda_n p_n}\to 0
	\),
and the same equation applied to $ST$ implies 
	 	$\lim_n \nHS{ST p_n -\tau_n(p_n ST p_n)p_n}=0$.  Therefore
	 	\[
	 	\lim_n \tau_n(p_n ST p_n )-\tau_n(p_n S p_n)\tau_n(p_n T p_n)=0 .
	 	\] 
	 After passing to the quotient, this  gives $\dot E(\dot S\dot T)=\pi(\sum_n \eta_n \lambda_n p_n)=\dot E(\dot S)\dot E(\dot T)$, as required.

Thus $\Psi$ is multiplicative, hence it is a $^*$-homomorphism whose restriction to its range $A$ is the identity map.  Thus, if $I:=\ker(\Psi)$ then  the short exact sequence
\[
0\to I \to \cQ(H)\cap \cW(\Gamma)'\toPsi A\to 0
\]
splits with $A\ni a\mapsto a+I$ being the right inverse of $\Psi$. 
\end{proof}

Although it is not used elsewhere in the present paper, I find it that the following proposition provides sufficient insight into the structure of the relative commutant of $\cW(\Gamma)$ in $\cQ(H)$ and the structure of $\Ext(\cW(\Gamma))$ to merit its inclusion. 

\begin{proposition}
\label{L.01law} Suppose that $\Gamma$,  $\cW(\Gamma)$, and $p_n$, for $n\in \bbN$,  are as in Definition~\ref{D.W(Gamma)}.  
\begin{enumerate}
\item\label{1.01law} 	For every projection $p\in \cB(H)$ such that $\dot p\in \cQ(H)\cap \cW(\Gamma)'$ there is $X=X_p\subseteq \bbN$ which satisfies  (considering $pp_n$ as an element of $\cB(H_n,H)$)
\[
\lim_{n\in X,\\ n\to \infty} \nHS{ pp_n}=1
\text{ 
and }
\lim_{n\notin X,\\ n\to \infty} \nHS{ pp_n}=0. 
\]
\item\label{2.01law} If $\dot p$ and $\dot q$ are Murray--von Neumann equivalent projections belonging to  $\cQ(H)\cap \cW(\Gamma)'$, then the symmetric difference $X_p\Delta X_q$ is finite. 
\end{enumerate}
\end{proposition}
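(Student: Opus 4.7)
The plan is to apply Proposition~\ref{P.W(Gamma)}\eqref{6.P.W(Gamma)} and use the projection identity $p=p^2$ to force the coefficients $\tau_n(p_n p p_n)$ into a dichotomy. Specifically, let $p$ be a lift of $\dot p\in \cQ(H)\cap \cW(\Gamma)'$ and set $\lambda_n:=\tau_n(p_n p p_n)$. By Proposition~\ref{P.W(Gamma)}\eqref{6.P.W(Gamma)}, $\nHS{pp_n-\lambda_n p_n}\to 0$, and since the norm on $\cB(H_n,H)$ is computed via $\nHS{T}^2=\tau_n(T^*T)$ with $\tau_n(p_n)=1$, we have $\nHS{pp_n}^2=\tau_n(p_np^2p_n)=\lambda_n$.

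Expanding $\nHS{pp_n-\lambda_n p_n}^2$, all cross terms collapse using $p^2=p$ and $p_n^2=p_n$, leaving the identity $\nHS{pp_n-\lambda_n p_n}^2=\lambda_n-\lambda_n^2=\lambda_n(1-\lambda_n)$. Since the left-hand side tends to $0$ and $\lambda_n\in[0,1]$, for every $\eta>0$ all but finitely many $n$ satisfy $\lambda_n<\eta$ or $\lambda_n>1-\eta$. Declaring $X_p:=\{n:\lambda_n>1/2\}$ therefore gives $\lambda_n\to 1$ along $X_p$ and $\lambda_n\to 0$ along $\bbN\setminus X_p$; the equality $\nHS{pp_n}^2=\lambda_n$ then delivers the two required limits. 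Note that $X_p$ depends on the choice of lift only up to a finite symmetric difference, because any two lifts differ by a compact operator $K$ and $\|Kp_n\|\to 0$ (since $p_n\leq I-\sum_{m<n}p_m$), hence $\nHS{Kp_n}\leq\|Kp_n\|\to 0$.

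For \eqref{2.01law}, fix a partial isometry $\dot v\in \cQ(H)\cap \cW(\Gamma)'$ with $\dot v^*\dot v=\dot p$ and $\dot v\dot v^*=\dot q$, and a lift $v\in\cB(H)$. Applying Proposition~\ref{P.W(Gamma)}\eqref{6.P.W(Gamma)} to $v$ gives $\mu_n:=\tau_n(p_nvp_n)$ such that $\nHS{vp_n-\mu_np_n}\to 0$, hence $\nHS{vp_n}^2-|\mu_n|^2\to 0$ (the sequences are uniformly bounded by $\|v\|$). On the other hand $v^*v-p$ and $vv^*-q$ are compact, so the observation $\nHS{Kp_n}\to 0$ for compact $K$ yields
\[
\tau_n(p_nv^*vp_n)-\lambda_n^p\to 0\quad\text{and}\quad \tau_n(p_nvv^*p_n)-\lambda_n^q\to 0,
\]
where $\lambda_n^p$ and $\lambda_n^q$ are the coefficients built from $p$ and $q$ in part \eqref{1.01law}. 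Since $\tau_n(p_nv^*vp_n)=\nHS{vp_n}^2$, both of these quantities differ from $|\mu_n|^2$ by a null sequence, so $\lambda_n^p-\lambda_n^q\to 0$. Because each sequence is eventually close to $0$ or to $1$, this forces $n\in X_p\liff n\in X_q$ for all sufficiently large $n$, so $X_p\Delta X_q$ is finite.

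The only delicate point is the bookkeeping for the various normalized Hilbert--Schmidt norms; once $\nHS{pp_n-\lambda_np_n}^2=\lambda_n(1-\lambda_n)$ is in hand, the dichotomy and its stability under Murray--von Neumann equivalence are immediate.
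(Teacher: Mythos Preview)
Your argument is correct and rests on the same underlying estimate as the paper's, namely Proposition~\ref{P.W(Gamma)}\eqref{6.P.W(Gamma)}, but the paper packages things at a higher level of abstraction. For \eqref{1.01law} the paper invokes the $^*$-homomorphism $\Psi$ of Proposition~\ref{P.linfty}: since $\Psi(\dot p)$ is a projection in $\pi[Z(\calD)]\cong\ell_\infty/c_0$, it lifts (by real rank zero) to some $\sum_{n\in X_p}p_n$, and then $E(p)-\sum_{n\in X_p}p_n\in\cK(H)$ forces $\tau_n(p_npp_n)\to 1$ on $X_p$ and $\to 0$ off it. Your route bypasses $\Psi$ and extracts the dichotomy from the identity $\nHS{pp_n-\lambda_np_n}^2=\lambda_n(1-\lambda_n)$; this is precisely the computation hiding behind $\Psi(\dot p)^2=\Psi(\dot p)$, so the two arguments are equivalent. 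For~\eqref{2.01law} the paper gives a one-line proof using that the range of $\Psi$ is abelian: $\Psi(\dot p)=\Psi(\dot v^*\dot v)=\Psi(\dot v)^*\Psi(\dot v)=\Psi(\dot v)\Psi(\dot v)^*=\Psi(\dot v\dot v^*)=\Psi(\dot q)$. Your direct computation with $\mu_n$ is exactly this multiplicativity unpacked for the particular products $v^*v$ and $vv^*$. The conceptual gain of the paper's packaging is that \eqref{2.01law} comes for free once $\Psi$ is known to be a $^*$-homomorphism with abelian range; your approach is more self-contained but recomputes what has already been established.

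One small omission in your part \eqref{2.01law}: you justify $\tau_n(p_nv^*vp_n)=\nHS{vp_n}^2\to|\mu_n|^2$, which handles $\lambda_n^p$, but for $\lambda_n^q$ you need $\tau_n(p_nvv^*p_n)=\nHS{v^*p_n}^2\to|\mu_n|^2$ as well. This follows by applying Proposition~\ref{P.W(Gamma)}\eqref{6.P.W(Gamma)} to $v^*$ (with coefficient $\overline{\mu_n}$), and should be stated.
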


\begin{proof} With the $^*$-homomorphism $\Psi\colon \cQ(H)\cap \cW(\Gamma)'\to \pi[Z(\calD)]$  as in Proposition~\ref{P.linfty}, $\Psi(\dot p)$ is a projection in $\pi[Z(\calD)]$. Since $Z(\calD)$, being a von Neumann algebra,  has real rank zero, by \cite[Lemma~3.1.13]{Fa:STCstar} $\Psi(\dot p)$ can be lifted to a projection $q$ in $Z(\calD)$. This projection is of the form $q=\sum_{n\in X_p} p_n$ for some $X_p\subseteq \bbN$. 
	Writing $\tau_n$ for the normalized trace on $\cB(H_n)$ as in the proof of Proposition~\ref{P.linfty},  we have  $\dot E(\dot p)=\dot E(\dot q)=\pi(\sum_n \tau_n(p_n p p_n)p_n)$. Since $E(p)-\sum_{n\in X_p} p_n$ is compact, we have\footnote{If one of $X_p$ or $\bbN\setminus X_p$ is finite, then only one of the two displayed limits is well-defined and relevant, and we ignore the other one.} 
\[
\lim_{n\to \infty, n\in X_p} \tau_n(p_n p p_n)=1
 \text{ and }\lim_{n\to \infty, n\notin X_p} \tau_n(p_n p p_n)=0.
 \]
  Since $\nHS{p p_n}^2=\tau_n( p_n p p_n)$
  the set $X_p$ is as required.

\eqref{2.01law} Suppose that $\dot p$ and $\dot q$ are Murray--von Neumann equivalent projections in $\cQ(H)\cap \cW(\Gamma)'$ and let $\dot v$ be a partial isometry in $\cQ(H)\cap \cW(\Gamma)'$ such that $\dot v^* \dot v=\dot v\dot v^*$. 

Since the range of $\Psi$ as provided by Proposition~\ref{P.linfty} is abelian, we have  $\Psi(\dot p)=\Psi(\dot v^* \dot v)=\Psi(\dot v \dot v^*)=\Psi(\dot q)$ and therefore $X_p$ and $X_q$ correspond to the same projection in $\pi[\calD]$. This is equivalent to $X_p\Delta X_q$ being finite. 
\end{proof}

 Perhaps it should be noted that the converse of Proposition~\ref{L.01law} \eqref{2.01law} is not true. For example, if $\dot p$ corresponds to the trivial extension of $\cW(\Gamma)$ then~$X_p$ is finite, but $\dot p$ is not equal to $0$.

\section{$D$-saturation of the path algebra $A_{\text\faForward}$}\label{S.Ainfty}

The main result of this section, Theorem~\ref{T.Ainfty}, is a warmup for Theorem~\ref{T.Q(A)} that will be used in the proof of Theorem~\ref{T.A}. Theorem~\ref{T.Ainfty} will not be used in this proof and readers not interested in path algebras (as well as those not requiring a warmup) can proceed directly to \S\ref{S.Q(A)}. 

Suppose that $D$ is a separable and unital \cstar-algebra. Following \cite{FaHaRoTi:Relative}, we say that  a (nonseparable) unital \cstar-algebra $C$ is \emph{$D$-saturated} if for every separable \cstar-subalgebra of $C$ there is an injective, unital $^*$-homomorphism from $D$ into $C\cap A'$. Prime examples of $D$-saturated algebras are the ultrapower $A_\cU$ and an asymptotic sequence algebra $\ell_\infty(A)/c_0(A)$ of a tensorially $D$-absorbing \cstar-algebra $A$ (see \cite{effros1978c}, \cite[Theorem~7.2.2]{Ror:Classification},  and \cite{FaHaRoTi:Relative}).

For a \cstar-algebra $A$ consider the path algebra 
\[
A_{\text\faForward}:=C_b([0,\infty), A)/C_0([0,\infty),A). 
\]

\begin{theorem}\label{T.Ainfty}
Suppose that $D$ is a separable, unital,   and strongly self-absorbing  \cstar-algebra. If $A$ is separable, unital, and tensorially $D$-absorbing then  $A_{\text\faForward}$ is $D$-saturated. 
\end{theorem}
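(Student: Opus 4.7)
The plan is, given a separable \cstar-subalgebra $B\subseteq A_{\text\faForward}$, to construct a norm-continuous family $(\psi_t)_{t\in[0,\infty)}$ of unital $^*$-homomorphisms $\psi_t\colon D\to A$ such that, for every $d\in D$ and every $b_k$ in a fixed dense sequence of lifts in $C_b([0,\infty),A)$ of elements of $B$, one has $\|[\psi_t(d),b_k(t)]\|\to 0$ as $t\to\infty$. Setting $\Phi(d):=\dot{(t\mapsto\psi_t(d))}$ then yields a unital $^*$-homomorphism $D\to A_{\text\faForward}\cap B'$: multiplicativity of each $\psi_t$ passes to $\Phi$, and the commutator condition forces $[\Phi(d),\dot b_k]=0$ in $A_{\text\faForward}$ for every $k$, so $\Phi(D)\subseteq A_{\text\faForward}\cap B'$.

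Two standard consequences of the hypotheses supply the tools. \textbf{(i)} Since $A$ is tensorially $D$-absorbing, for every finite $F\Subset A$, every finite $G\Subset D$ in the unit ball, and every $\e>0$ there is a unital $^*$-homomorphism $\phi\colon D\to A$ with $\|[\phi(d),a]\|<\e$ for $d\in G$ and $a\in F$. \textbf{(ii)} Since $D$ is strongly self-absorbing, any two unital $^*$-homomorphisms $D\to A$ are approximately unitarily equivalent, and by a reindexing trick applied to the copy of $D$ in $A$ provided by (i), the conjugating unitary can be taken from a copy of $U(D)\subseteq A$ that $\e$-commutes with any prescribed finite subset of $A$. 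For strongly self-absorbing $D$ the group $U(D)$ is path-connected, so any such unitary is joined to $1$ by a continuous path lying in the same copy of $U(D)$.

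Given these ingredients, I would build $(\psi_t)$ recursively. Fix a countable dense sequence $\{d_n\}$ in the unit ball of $D$, set $G_n:=\{d_1,\dots,d_n\}$, and pick $\e_n\downarrow 0$. At stage $n$, given $t_{n-1}$ and $\phi_{n-1}$, choose $t_n>t_{n-1}$ and a unital $^*$-homomorphism $\phi_n\colon D\to A$ such that $\|[\phi_n(d),b_k(s)]\|<\e_n$ for $d\in G_n$, $k\le n$, and $s\in[t_{n-1},t_n]$: each $b_k([t_{n-1},t_n])$ is compact as the continuous image of a compact set, so is covered by a finite $\e_n$-net $F_n\Subset A$, and applying (i) to $F_n$ produces $\phi_n$. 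On $[t_{n-1},t_n]$ I set $\psi_t:=u_n(t)\phi_{n-1}(\cdot)u_n(t)^*$, where $u_n\colon[t_{n-1},t_n]\to U(A)$ is a continuous path lying in a copy of $U(D)\subseteq A$ that $\e_n$-commutes with $F_n$, starting at $1$ and ending at a unitary $v_n$ with $\|v_n\phi_{n-1}(d)v_n^*-\phi_n(d)\|<\e_n$ on $G_n$ (provided by (ii)). The $\e_n$-sized gluing discrepancy at $t_n$ is absorbed into the error budget by redefining the ``starting'' homomorphism of the next stage to be $v_n\phi_{n-1}v_n^*$ rather than $\phi_n$, and a telescoping estimate gives $\|[\psi_t(d),b_k(t)]\|=O(\e_n)$ on $[t_{n-1},t_n]$.

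The main obstacle is the simultaneous constraint in the last step: the interpolating path $u_n(t)$ must both reach the prescribed target unitary $v_n$ and remain inside a single path-connected copy of $U(D)\subseteq A$ that $\e_n$-commutes with the finite set $F_n$ drawn from $\bigcup_{k\le n}b_k([t_{n-1},t_n])$. This is the continuous analog of the discrete reindexing used in the ultrapower/sequence-algebra version of the theorem (\cite[Theorem~7.2.2]{Ror:Classification}) and is where tensorial $D$-absorption of $A$ is used in its full strength, together with path-connectedness of $U(D)$ for strongly self-absorbing $D$. Once it is granted, the verification that $\Phi$ is a well-defined unital $^*$-homomorphism whose image commutes with $B$ is routine.
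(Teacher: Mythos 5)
Your overall architecture --- a piecewise-defined, norm-continuous path $t\mapsto\psi_t$ of unital $^*$-homomorphisms $D\to A$, constant up to conjugation on blocks $[t_{n-1},t_n]$ and glued by continuous unitary paths that approximately commute with the compact sets $b_k([t_{n-1},t_n])$ --- is exactly the architecture of the paper's proof. The gap is the step you yourself flag as ``the main obstacle'': you never establish that the interpolating path $u_n(t)$ exists, and the two tools you offer do not suffice. Approximate unitary equivalence of unital $^*$-homomorphisms $D\to A$ (your (ii)) produces \emph{some} unitary $v_n$ with $\Ad v_n\circ\phi_{n-1}\approx\phi_n$ on $G_n$, but gives no control on where $v_n$ sits; the claim that ``by a reindexing trick'' it can be taken inside a copy of $U(D)$ that $\e_n$-commutes with $F_n$ is precisely what needs proof, and a single copy of $D$ is in any case the wrong receptacle: a unitary conjugating one copy of $D$ onto another naturally lives in the \cstar-algebra generated by \emph{both} copies, i.e.\ in something like $D\otimes D$. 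Likewise, path-connectedness of $U(D)$ only joins $v_n$ to $1$ by \emph{some} path; what is needed is a path that implements the conjugation asymptotically \emph{while remaining approximately central}, and that is exactly the content of the Dadarlat--Winter theorem that the flip automorphism of $D\otimes D$ is strongly asymptotically inner (\cite[Theorem~2.2]{dadarlat2009kk}, which in turn needs the $K_1$-injectivity of $D$ from \cite[Remark~3.3]{winter2011strongly}). Without citing or reproving that theorem, the construction does not close.

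The paper sidesteps these difficulties by fixing once and for all an identification $A\cong A\otimes D^{\otimes\bbN}$, so that $A=\lim_n A_n$ with $A_{n+1}=A_n\otimes D_n$, and by using only the tensor-factor embeddings $\Theta_n\colon D\to D_n\subseteq A$. These \emph{exactly} commute with $A_n$, so the analogue of your estimate $\|[\phi_n(d),b_k(t)]\|<\e_n$ is the elementary inequality \eqref{eq.estimate} and no abstract existence result for approximately central embeddings is invoked; and for $m<n$ the copies $D_m$ and $D_n$ exactly commute with each other, generate a genuine copy of $D\otimes D$ inside $A\cap A_m'$, and the Dadarlat--Winter path implementing the flip there yields gluing unitaries $w^{m,n}_t\in A\cap A_m'$ with $w^{m,n}_0=1$ and $\lim_{t\to 1}w^{m,n}_t\Theta_m(d)(w^{m,n}_t)^*=\Theta_n(d)$. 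You should also repair a bookkeeping slip: on $[t_{n-1},t_n]$ your $\psi_t$ is a conjugate of $\phi_{n-1}$, which you only arranged to commute with the $b_k$ on the \emph{previous} block $[t_{n-2},t_{n-1}]$; the homomorphism used on a block must be adapted to that block, which the paper handles through the upper semicontinuous functions $f_{m,\e}$ and the nondecreasing function $g$. If you replace your abstract tools (i) and (ii) by the tensor factorization and \cite[Theorem~2.2]{dadarlat2009kk}, your argument becomes the paper's proof.
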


\begin{proof} Since $D\cong D^{\bigotimes \bbN}$, we can identify $A$ with $A\otimes D^{\otimes\bbN}$. Letting $D_j$ denote the $j$-th copy\footnote{We adopt the convention that $0\in \bbN$.} of $D$ in $D^{\bigotimes \bbN}$,  we have that  $A$ is the inductive limit of \cstar-algebras $A_0\cong A$ and $A_n:=A\otimes \bigotimes_{j<n} D_j$, for $n\geq 1$. Let $\Theta_n\colon D\to A$ be the embedding that identifies $D$ with $D_n$, so that 
\[
\Theta_n(d):=1_A\otimes \bigotimes_{j<n} 1_{D_j}\otimes d \otimes \bigotimes_{j>n} 1_{D_j}. 
\] 
By \cite[Remark~3.3]{winter2011strongly} $D$ is $K_1$-injective hence   \cite[Theorem~2.2]{dadarlat2009kk} implies that  the flip automorphism of $D\otimes D$ (the one that interchanges $a\otimes b$ with $b\otimes a$) is \emph{strongly asymptotically equivalent to the identity}. This means that there is a continuous path of unitaries $u_t$, for $0\leq t<1$, in $D\otimes D$ such that $u_0=1$ and $\lim_{t\to 1} u_t (a\otimes b) u_t^*=b\otimes a$. 
Fix $m<n$ and identify $D_m\otimes D_n$ with $D\otimes D$. Then the previous sentence gives a continuous path of unitaries $w^{m,n}_t$, for $0\leq t< 1$, in $A$ such that $w^{m,n}_0=1$ and for all $d\in D$ we have 
\[
\lim_{t\to 1}  w^{m,n}_t \Theta_m(d) (w^{m,n}_t)^*=\Theta_n(d). 
 \]
 In addition, $w^{m,n}_t\in A\cap A_m'$ for all $t$. 
If $m=n$ then for definiteness let $w^{m,n}_t=1$ for all $t$.  
 
 We are now ready to prove that $A_{\text\faForward}$ is $D$-saturated. Fix a separable \cstar-subalgebra $B$ of $A_{\text\faForward}$, and let $\{a_n\mid n\in \bbN\}$, be a subset of $C_b([0,\infty), A)$ such that (with $\pi$ denoting the quotient map) $\pi(a_n)$, for $n\in \bbN$, is dense in $B$. For each $m$ and $\varepsilon>0$ define $f_{m,\varepsilon}\colon [0,\infty)\to \bbN$ by 
 \[
 f_{m,\varepsilon}(t)=\min\{j\mid \dist(a_m(t), A_j)<\varepsilon\}. 
 \]
 This function is clearly upper semicontinuous. 
 
 If $d\in D$, $\|d\|\leq 1$,  $n>f_{m,\varepsilon}(t)$, and $b\in A_{n-1}$ is such that $\|b-a_m(t)\|<\varepsilon$, we then have 
 \begin{equation}\label{eq.estimate}
 	\|[\Theta_n(d),a_m(t)]\|=\|[\Theta_n(d),a_m(t)-b]\|\leq 2\|d\|\|a_m(t)-b\|<2\varepsilon.
 \end{equation}
   For all $n\in \bbN$, the value 
 \[
 g(n):=\max\{f_{m,1/(n+1)}(t)\mid m\leq n+1\text{ and }  t\leq n+1\} 
 \]
 is finite by upper semicontinuity of $f_{m,\varepsilon}$ and compactness of $[0,n+1]$.  
Clearly~$g$ is nondecreasing. 

Define $\Theta\colon D\to C_b([0,\infty), A)$ by first letting, for $n\in \bbN$,  
\[
\Theta(d)(n):=\Theta_{g(n)}(d). 
\]
For $n\in \bbN$ and $0\leq t<1$ let 
\[
\Theta(d)(n+t):=w^{g(n),g(n+1)}_t \Theta_{g(n)}(d) (w^{g(n),g(n+1)}_t)^*. 
\]
By the choice of the unitaries $w^{g(n),g(n+1)}_t$ we have 
\[
\lim_{t\to 1} 
w^{g(n),g(n+1)}_t \Theta_{g(n)}(d) (w^{g(n),g(n+1)}_t)^*=\Theta_{g(n+1)}(d),
\]
 and therefore $\Theta(d)$ is continuous, $\|\Theta(d)(n+t)\|=\|d\|$ for all $n$ and $t$, hence  $\Theta(d)\in C_b([0,\infty), A)$. 
Also,  \eqref{eq.estimate} and $w_t^{g(n),g(n+1)}\in A\cap A_g(n)'$ for all $n$ and $t$ together imply that  for every $m$ and every $d\in D$ we have $\lim_{t\to \infty}\|[a_m(t), \Theta(d) (t)]\|=0$, and therefore $\pi\circ\Theta[D]\subseteq A_{\text\faForward}\cap B'$. 
It is clear that $\pi\circ \Theta\colon D\to A_{\text\faForward}$ is unital and injective. Since $B$ was an arbitrary separable \cstar-subalgebra of $A_{\text\faForward}$, we conclude that $A_{\text\faForward}$ is $D$-saturated. 
\end{proof}

\section{$D$-saturation of the corona $\cQ(A\otimes \cK)$}
\label{S.Q(A)}

In this section we prove the folloqing analog of Theorem~\ref{T.Ainfty}. 

\begin{theorem}\label{T.Q(A)}
	Suppose that $D$ is a separable, unital,   and strongly self-absorbing  \cstar-algebra. If $A$ is separable, unital, and tensorially $D$-absorbing then  $\cQ(A\otimes \cK)$ is $D$-saturated. 
\end{theorem}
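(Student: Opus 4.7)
\emph{Plan.} The strategy is to adapt the proof of Theorem~\ref{T.Ainfty} by replacing the continuous parameter $t\in[0,\infty)$ with a discrete decomposition of $1_{\cM(A\otimes\cK)}$ coming from a projection-valued quasicentral approximate unit of $A\otimes\cK$. The role played by the continuous path of unitaries $w^{g(n),g(n+1)}_t$ is taken over by the fact that each block corner $P_n(A\otimes\cK)P_n$ tensorially absorbs $D$.

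Fix a separable $B\subseteq\cQ(A\otimes\cK)$, a bounded sequence $(b_m)_{m\in\bbN}\subseteq\cM(A\otimes\cK)$ whose images under $\pi$ are dense in $B$, and a countable dense sequence $(d_i)$ in the unit ball of $D$. Since $A\otimes\cK$ is $\sigma$-unital and has real rank zero, a standard Arveson-type diagonal construction yields an increasing sequence of projections $e_n\in A\otimes\cK$ with $e_n\to 1_{\cM}$ strictly and $\|[b_m,e_n]\|\to 0$ for each $m$, refined so that every block $P_n:=e_n-e_{n-1}$ (with $e_{-1}:=0$) is a full projection in $A\otimes\cK$. By Brown's stable isomorphism theorem, the corner $P_n(A\otimes\cK)P_n$ is then a unital separable \cstar-algebra stably isomorphic to $A\otimes\cK$, hence itself tensorially $D$-absorbing. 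A diagonal argument now produces unital $^*$-ho\-mo\-mor\-phisms $\theta_n\colon D\to P_n(A\otimes\cK)P_n$ satisfying
\[
\max_{i\leq n,\,m\leq n}\|[\theta_n(d_i),\,P_n b_m P_n]\|<1/n.
\]
Because the $P_n$ are mutually orthogonal with $\sum_n P_n=1$ strictly, $\Theta(d):=\sum_n\theta_n(d)$ converges strictly in $\cM(A\otimes\cK)$ and, by orthogonality, defines a unital $^*$-homomorphism $D\to\cM(A\otimes\cK)$, which is injective by simplicity of $D$.

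To conclude that $\pi\circ\Theta[D]\subseteq\cQ(A\otimes\cK)\cap B'$, I verify $[\Theta(d),b_m]\in A\otimes\cK$ for every $d$ and $m$. Set $\bbE(b_m):=\sum_n P_n b_m P_n$ (strictly convergent). The identities $P_n b_m P_{n+1}=P_n[b_m,P_{n+1}]$ and $P_n b_m P_k=-P_n[b_m,e_{n+1}]P_k$ for $k\geq n+2$, combined with $\|[b_m,e_n]\|\to 0$ and the orthogonality of the row and column projections, force $b_m-\bbE(b_m)\in A\otimes\cK$; since $A\otimes\cK$ is an ideal, the commutator $[\Theta(d),b_m-\bbE(b_m)]$ lies in $A\otimes\cK$ as well. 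For the block-diagonal part, the identity $\theta_k(d)P_n=\delta_{k,n}\theta_n(d)$ yields $[\Theta(d),\bbE(b_m)]=\sum_n[\theta_n(d),P_n b_m P_n]$, whose summands sit in the mutually orthogonal hereditary subalgebras $P_n(A\otimes\cK)P_n\subseteq A\otimes\cK$ with norms vanishing as $n\to\infty$ (first for $d=d_i$ by construction, and then for every $d\in D$ by density of $(d_i)$ and the Lipschitz bound $\|[\theta_n(d),x]-[\theta_n(d'),x]\|\leq 2\|d-d'\|\|x\|$); orthogonality of supports then implies that the series converges in norm to an element of $A\otimes\cK$.

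The main technical obstacle is arranging the projection-valued quasicentral approximate unit $(e_n)$ so that each block $P_n$ is simultaneously full in $A\otimes\cK$: this fullness is what underwrites the Morita-equivalence argument guaranteeing that the corners $P_n(A\otimes\cK)P_n$ are $D$-absorbing, which in turn is the crucial input enabling the inductive construction of the asymptotically central homomorphisms $\theta_n$.
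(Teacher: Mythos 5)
The construction founders at its very first step: an increasing sequence of projections $e_n\in A\otimes\cK$ with $e_n\to 1$ strictly and $\|[b_m,e_n]\|\to 0$ for every $m$ does not exist in general. (Real rank zero is also unavailable --- $\cZ$ is projectionless, so $\cZ\otimes\cK$ does not have real rank zero --- but that is beside the point, since $1_A\otimes q_n$ always provides an approximate unit of projections; the obstruction is quasicentrality.) If such a sequence existed then, exactly as you argue in your third paragraph, every $b\in\cM(A\otimes\cK)$ would be block-diagonal modulo $A\otimes\cK$ with respect to the blocks $P_n=e_n-e_{n-1}$. Already for $A=\bbC$ this is false: a compact perturbation of a block-diagonal operator is quasidiagonal, a quasidiagonal Fredholm operator has index $0$, and so the unilateral shift admits no such decomposition. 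Arveson's quasicentral approximate units consist of positive contractions, not projections; insisting on projections only yields the one-sided corner estimates $\|(1-e_{j(n+1)})a_m e_{j(n)}\|\to 0$, which hold simply because $a_m e_{j(n)}$ lies in the ideal. This is precisely why the proof goes through Lemma~\ref{L.diagonalization}: each $a_m$ is, modulo $A\otimes\cK$, only the sum of \emph{two} block-diagonal elements $a_m^0$ and $a_m^1$ with respect to the interleaved blockings $\{p_{2i}+p_{2i+1}\}$ and $\{p_{2i+1}+p_{2i+2}\}$.

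This has a knock-on effect on the second half of your argument. Once the off-diagonal terms $p_{2i+1}a_m^0p_{2i}$ and $p_{2i}a_m^0p_{2i+1}$ are unavoidable, independently chosen blockwise embeddings $\theta_n\colon D\to p_n(A\otimes\cK)p_n$ obtained by a diagonal argument will not do: for $[\Theta(d),a_m^0]$ to lie in the ideal one needs $\theta_{2i}$ and $\theta_{2i+1}$ (and, for $a_m^1$, also $\theta_{2i+1}$ and $\theta_{2i+2}$) to agree up to an error tending to $0$, and unrelated embeddings carry no such coherence. The paper arranges this by taking every $\Psi_n$ to be a conjugate of the fixed tensorial embeddings $\Theta_{f^+(i)}$ by a slowly varying discretized path of unitaries (Lemma~\ref{L.discretization}, resting on the Dadarlat--Winter asymptotic innerness of the flip on $D\otimes D$), so that $\|\Psi_n(d)-\Psi_{n+1}(d)\|$ is uniformly small. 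Your fullness/Brown argument for $D$-absorption of the corners is not needed in the paper's setup (there $p_n=1_A\otimes r_n$, so each corner is a matrix algebra over $A$), and in any case it cannot substitute for the missing coherence between consecutive blocks.
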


The proof of Theorem~\ref{T.Q(A)} is an elaboration of the proof of Theorem~\ref{T.Ainfty}, and as such it also depends on \cite[Theorem~2.2]{dadarlat2009kk} in an essential way. 
The additional ingredient in this proof is Lemma~\ref{L.diagonalization} below. The origins of this lemma can be traced back at least to Voiculescu’s tri-diagonal argument (\cite{voiculescu1976non}, more recently \cite{Voi:Countable}) and more directly  to \cite[Theorem~3.1]{Ell:Derivations} (see \cite{Ell:Ideal}) and \cite[Lemma~1.2]{Fa:All}. The simplest form of these lemmas shows that every element of the corona of a $\sigma$-unital \cstar-algebra that has an approximate unit consisting of projections is the sum of two block-diagonal elements. The appropriately uniform version of these lemmas for countable sets of operators is indispensable in the study of automorphisms of coronas,  both in constructing outer automorphisms (when combined with the Continuum Hypothesis or its weakenings, see \cite[Lemma~4.3]{CoFa:Automorphisms}, \cite[\S 17.1]{Fa:STCstar}) and in proving that all automorphisms of a given corona are inner, or of a very simple form (when combined with forcing axioms, see \cite[\S 17]{Fa:STCstar}, \cite{mckenney2018forcing}, and  \cite{vignati2018rigidity}).

The original construction of outer automorphisms of the Calkin algebra~(\cite{PhWe:Calkin}) uses a different approach that is more directly related to Voi\-cu\-les\-cu’s tri-diagonal trick. This is the unfolding argument of \cite[Proposition~1.4]{PhWe:Calkin} and \cite[\S 3]{manuilov2004theory}, used in the proof of Corollary~\ref{C1} below. The two approaches appear to be interchangeable for many purposes. 

  In the context of (strongly) self-absorbing \cstar-algebras similar methods had been  used in \cite[Theorem~4.5]{Kirc:Central} and \cite[\S 4]{ToWi:Strongly}, and the variant of these results proven in \cite{dadarlat2009kk} is used in the proof of Lemma~\ref{L.discretization}  below.   As the referee pointed out, Lemma~\ref{L.diagonalization} can also be considered as an instance of the `2-coloured equivalence’ used in Elliott’s classification program. The first instance of this was the seminal 2-coloured equivalence of projections (\cite[Lemma~2.1]{matui2014decomposition}) that  has been perfected further to obtain a method for presenting a *-homomorphism into a massive $\cZ$-stable \cstar-algebra as the sum of two order zero completely positive contractions in \cite[\S 6]{bosa2019covering}. At the hindsight, this is very similar to what Lemma~\ref{L.diagonalization} and its predecessors  (including the tri-diagonal-type arguments) do for *-homomorphisms of separable \cstar-algebras, with ultraproducts in their codomain replaced by coronas of $\sigma$-unital \cstar-algebras, with the difference that $\cZ$-stability is in our situation not needed.

The remainder of this section is devoted to the proof of Theorem~\ref{T.Q(A)}. 
Suppose that $D$ and $A$ are as in Theorem~\ref{T.Q(A)}. As in the first paragraph of the proof of Theorem~\ref{T.Ainfty}, we write $A=\lim_n A_n$, where $A_0\cong A$ and $A_{n+1}=A_n\otimes D_n$, with $D_n$ being  an isomorphic copy of $D$. As before, we identify $D_n$ with a unital \cstar-subalgebra of $A$ and  let $\Theta_n\colon D\to A$ be the embedding that identifies $D$ with $D_n$. 
We will need the following discretization of \cite[Theorem~2.2]{dadarlat2009kk}.

\begin{lemma} \label{L.discretization} 
Suppose that $D$ and $A$ are as in Theorem~\ref{T.Q(A)} and that $A_j$, $D_j$, and $\Theta_j$ are as in the previous paragraph. Then for every $\varepsilon>0$ and every $F\Subset D$ there is $k(F,\varepsilon)\in \bbN$ such that for all $m<n$ there are unitaries $w_j=w^{m,n,\varepsilon}_j(F)$, for $j<k(F,\varepsilon)$, in $A$ which satisfy the following. 
\begin{enumerate}
	\item $w_0=1$. 
	\item $\|w_j-w_{j+1}\|<\varepsilon$ for all $j<k(F,\varepsilon)-1$. 
	\item $w_j\in A\cap A_m'$ for all $j<k(F,\varepsilon)$. 
	\item \label{4.discretization} $\|w_{k(F,\varepsilon)-1}\Theta_m (d)w_{k(F,\varepsilon)-1}^*-\Theta_n(d)\|<\varepsilon$ for all $d\in F$. 
\end{enumerate}
\end{lemma}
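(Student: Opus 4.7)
The plan is to obtain the required discretization by sampling the Dadarlat--Winter continuous path $(u_t)_{0\leq t<1}$ in $D\otimes D$ already invoked in the proof of Theorem~\ref{T.Ainfty}. The key observation is that this path lives in the abstract algebra $D\otimes D$, which does not depend on $m$ or $n$; the desired uniformity of $k(F,\varepsilon)$ in $m,n$ is then automatic once the sampled unitaries are transported into $A$ via the identification $D\otimes D\cong D_m\otimes D_n\subseteq A$.

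Concretely, since $F$ is finite and $\lim_{t\to 1}u_t(d\otimes 1)u_t^*=1\otimes d$ for every $d\in D$, I would first choose $T=T(F,\varepsilon)<1$ such that $\max_{d\in F}\|u_T(d\otimes 1)u_T^*-1\otimes d\|<\varepsilon$. By norm-continuity of $t\mapsto u_t$ on the compact interval $[0,T]$, I would then pick a partition $0=s_0<s_1<\cdots<s_{k-1}=T$ with $\|u_{s_{j+1}}-u_{s_j}\|<\varepsilon$ for each $j<k-1$, and set $k(F,\varepsilon):=k$. This integer depends only on $F$, $\varepsilon$, and the fixed abstract path $(u_t)$; in particular it is independent of $m$ and $n$.

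For each pair $m<n$, the natural embedding $D\otimes D\hookrightarrow D_m\otimes D_n\subseteq A$ sends $d\otimes 1\mapsto\Theta_m(d)$ and $1\otimes d\mapsto\Theta_n(d)$. I would define $w^{m,n,\varepsilon}_j(F)$ to be the image of $u_{s_j}$ under this unital $^*$-homomorphism. All four conclusions then transport directly: $w_0=1$ because $u_0=1$; $\|w_{j+1}-w_j\|\leq\|u_{s_{j+1}}-u_{s_j}\|<\varepsilon$ by contractivity of $^*$-homo\-mor\-phisms; $w_j\in D_m\otimes D_n\subseteq A\cap A_m'$ because $D_m$ and $D_n$, with $n>m$, occupy tensor slots disjoint from those defining $A_m=A\otimes\bigotimes_{j<m}D_j$; and~\eqref{4.discretization} is precisely the embedded image of the estimate fixed at $T$.

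All of the analytic content is absorbed into the Dadarlat--Winter theorem, so the only real point of the lemma is the uniformity in $m,n$, which is what forces the path to be fixed once and for all in $D\otimes D$ before transportation to $A$. I do not anticipate any serious obstacle beyond recording these observations carefully; the argument is essentially a bookkeeping repackaging of the continuous path into a form directly usable in the proof of Theorem~\ref{T.Q(A)}.
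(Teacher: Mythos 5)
Your proposal is correct and follows essentially the same route as the paper: sample the Dadarlat--Winter path $(u_t)$ in the abstract algebra $D\otimes D$ (choosing the endpoint $T$ and the partition there, so that $k(F,\varepsilon)$ is automatically independent of $m,n$), then transport via the identification $D\otimes D\cong D_m\otimes D_n\subseteq A$. Your explicit remark that the endpoint condition should be phrased as $\|u_T(d\otimes 1)u_T^*-1\otimes d\|<\varepsilon$ \emph{before} transporting is, if anything, a slightly cleaner way of recording the uniformity than the paper's phrasing.
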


Clearly the unitaries $w^{m,n,\varepsilon}_j(F)$ depend on the presentation of $A$  as $\lim_n A_n$, but this presentation is fixed once and for all. It is important that $k(F,\varepsilon)$ does not depend on the choice of $m$ and $n$. 

\begin{proof} We may let $w^{m,m,\varepsilon}_j(F):=1$ for all $F,m=n$ and $j<k(F,\varepsilon)$ (with $k(F,\varepsilon)$ to be determined shortly)  and assume that $m<n$.  By \cite[Remark~3.3]{winter2011strongly} $D$ is $K_1$ injective hence  \cite[Theorem~2.2]{dadarlat2009kk} implies that in $D\otimes D$ there is a continuous path of unitaries $u_t$, for $0\leq t<1$ such that $u_0=1$ and $\lim_{t\to 1}  u_t(a\otimes b) u_t^*=b\otimes a$ for all $a$ and $b$ in $D$. For $\varepsilon>0$, fix $t'<1$ large enough to have  
$\|u_{t'}\Theta_m (d)u_{t'}^*-\Theta_n(d)\|<\varepsilon$ for all $d\in F$. Then fix 
$k(F,\varepsilon)$ large enough to have $0=t(0)<t(1)<\dots < t(k(F,\varepsilon)-1)=t'$ which satisfy $\|u_{t(j)}-u_{t(j+1)}\|<\varepsilon$ for all $j<k(F,\varepsilon)-2$ and $\|u_{k(F,\varepsilon)-1}\Theta_m (d)u_{k(F,\varepsilon)-1}^*-\Theta_n(d)\|<\varepsilon$ for all $d\in F$.  Transferring the unitaries $u_{t(j)}$ to $A$, by identifying $D\otimes D$ with $D_m\otimes D_n$ as in the proof of Theorem~\ref{T.Ainfty}, we obtain  $w^{m,n,\varepsilon}_j(F):=u_{t(j)}$ as required. 
 \end{proof}

Part \eqref{1.1.diagonalization} of the following lemma is closely related to  \cite[Lemma 9.6.7]{Fa:STCstar}, \cite[Theorem~3.1]{Ell:Derivations}, and \cite[Lemma~2.6]{mckenney2018forcing} (see also the introduction to the present section for a discussion of the origins of this lemma). Although the latter result is fairly general, it does not literally imply the statement that we need and we sketch a proof for reader's convenience.

\begin{lemma} \label{L.diagonalization} Suppose that $B$ is a non-unital \cstar-algebra that has an approximate unit $(e_n)$ consisting of projections. 

\begin{enumerate}
\item \label{1.diagonalization}	If $a_n$, for $n\in \bbN$, belong to $\cM(B)$ then  there are an increasing sequence $j(n)$ in $\bbN$ and $a_n^0$, $a_n^1$, for $n\in \bbN$, in $\cM(B)$ with the following property. With the projections (letting $j(0)=0$ and $e_0=0$)  
\[
p_n:=e_{j(n+1)}-e_{j(n)}
\]
the following holds. 
\begin{enumerate}
\item \label{1.1.diagonalization} $(p_{2i}+p_{2i+1}) a_n^0 (p_{2j}+p_{2j+1})=0$ for all $n$ and $i\neq j$. 
\item \label{2.1.diagonalization} $(p_{2i+1}+p_{2i+2}) a_n^1 (p_{2j+1}+p_{2j+2})=0$ for all $n$ and $i\neq j$. 
\item \label{3.1.diagonalization} $a_n-a_n^0-a_n^1\in B$ for all $n$. 	
\end{enumerate}	
\item \label{2.diagonalization} If in addition $B=A\otimes \cK$, for $A$ unital and such that $A=\lim_n A_n$, then $p_n$ can be chosen of the form $1_A\otimes r_n$ for orthogonal projections $r_n\in \cK$, so that for some $f\colon \bbN\to \bbN$   and all $i$ in addition to \eqref{1.1.diagonalization}--\eqref{3.1.diagonalization} we have 
\begin{enumerate}
	\item \label{1.2.diagonalization} $(p_{2i}+p_{2i+1}) a_n^0 (p_{2i}+p_{2i+1})$
	belongs to  
	\[
	(p_{2i}+p_{2i+1}) (A_{f(i)}\otimes \cK) (p_{2i}+p_{2i+1}),
	\]
	\item \label{2.2.diagonalization} $(p_{2i+1}+p_{2i+2}) a_n^1 (p_{2i+1}+p_{2i+2})$
	belongs to 
	\[
	(p_{2i+1}+p_{2i}) (A_{f(i)}\otimes \cK) (p_{2i+1}+p_{2i+2}),
	\] 
	and
	\item  \label{3.2.diagonalization}  $\sum_{n\leq m} r_n$, for $m\in \bbN$, is an approximate unit in $\cK$. 
\end{enumerate} 
\end{enumerate}
\end{lemma}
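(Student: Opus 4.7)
The plan is to carry out the classical tri-diagonal truncation in the spirit of Voiculescu and of \cite[Theorem~3.1]{Ell:Derivations}, \cite[Lemma~1.2]{Fa:All}, and \cite[Lemma~9.6.7]{Fa:STCstar}, and, in part~\eqref{2.diagonalization}, to refine it by approximating each super-block by an element of $A_{f(i)}\otimes\cK$.

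For \eqref{1.diagonalization}, I will first inductively choose an increasing sequence $j(n)$ using, at each stage, that both $e_{j(l)}a_k$ and $a_k e_{j(l)}$ lie in $B$ and $(e_j)$ is an approximate unit. With $p_n:=e_{j(n+1)}-e_{j(n)}$ and the identity
\[
p_m a_k p_l = p_m(1-e_{j(m)})a_k e_{j(l+1)}p_l\qquad(m\geq l+2),
\]
together with its adjoint, this will ensure that $\sum_{|m-l|\geq 2}\|p_m a_k p_l\|<\infty$ for every fixed $k$, so that $a_k-a_k^{\mathrm{tri}}\in B$ where
\[
a_k^{\mathrm{tri}}:=\sum_{|m-l|\leq 1}p_m a_k p_l\in\cM(B).
\]
I will then split $a_k^{\mathrm{tri}}=a_k^0+a_k^1$ via
\[
a_k^0:=\sum_i(p_{2i}+p_{2i+1})a_k^{\mathrm{tri}}(p_{2i}+p_{2i+1}),
\]
\[
a_k^1:=\sum_i\bigl(p_{2i+1}a_k^{\mathrm{tri}} p_{2i+2}+p_{2i+2}a_k^{\mathrm{tri}} p_{2i+1}\bigr),
\]
both defined as strictly convergent sums of orthogonal bounded blocks in $B$, hence in $\cM(B)$. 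A direct block computation then confirms \eqref{1.1.diagonalization}--\eqref{3.1.diagonalization}.

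For \eqref{2.diagonalization}, I take orthogonal finite-rank projections $r_n\in\cK$ with $\sum_{k<n}r_k$ an approximate unit of $\cK$, so that $e_n:=1_A\otimes\sum_{k<n}r_k$ is an approximate unit of $B$ and $p_n=1_A\otimes r_n$. Each super-block
\[
(p_{2i}+p_{2i+1})(A\otimes\cK)(p_{2i}+p_{2i+1})\cong A\otimes M_{N_i}\qquad(N_i=\rank(r_{2i}+r_{2i+1}))
\]
equals $\lim_n A_n\otimes M_{N_i}$, so I can interleave with the construction of $j(n)$ a choice of $f(i)$ large enough that for every $k\leq i$ each super-block $(p_{2i}+p_{2i+1})a_k(p_{2i}+p_{2i+1})$ has a $2^{-i}$-approximation in $(p_{2i}+p_{2i+1})(A_{f(i)}\otimes\cK)(p_{2i}+p_{2i+1})$, and likewise for the odd super-blocks carrying $a_k^1$. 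Replacing each super-block by its approximation changes $a_k^0$ and $a_k^1$ by block-diagonal elements whose blocks tend to zero in norm; such elements lie in $B$, so the perturbation can be absorbed into \eqref{3.1.diagonalization}. This delivers \eqref{1.2.diagonalization} and \eqref{2.2.diagonalization}, while \eqref{3.2.diagonalization} is automatic from the choice of the $r_n$.

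The main obstacle will be the careful bookkeeping: the choice of $j(n+1)$ at stage $n$ must simultaneously drive down off-tri-diagonal norms uniformly in $k\leq n+1$ over many prior indices $l$ and leave room for compatible super-block approximations by $A_{f(i)}\otimes\cK$; but once the order of these constraints is fixed, the construction is a routine diagonalization.
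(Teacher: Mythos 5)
Your plan follows essentially the same route as the paper's proof: choose $j(n)$ so that $(1-e_{j(n+1)})a_me_{j(n)}$ and its adjoint have summably small norms, keep only the tri-diagonal part of each $a_m$ (the discarded part lying in $B$), split the tri-diagonal into two block-diagonal summands along the even/odd super-blocks, and in part (2) approximate each super-block within the corresponding corner of $A_{f(i)}\otimes\cK$ so that the total perturbation is again in $B$. Two small points to tighten when writing it up: the off-tri-diagonal part should be estimated column by column, i.e.\ via $\|\sum_{m\geq l+2}p_ma_kp_l\|=\|(1-e_{j(l+2)})a_ke_{j(l+1)}p_l\|$, since the individual block norms $\|p_ma_kp_l\|$ need not be absolutely summable over $m$ for fixed $l$; and since the conclusion requires exact membership of \emph{every} super-block of $a_n^0,a_n^1$ in the cutdown of $A_{f(i)}\otimes\cK$, the finitely many blocks with $i<n$ (for which you impose no approximation) must also be dealt with, e.g.\ by replacing them with $0$ --- a perturbation in $B$ --- which is what the paper does by first cutting $a_n^l$ down by $1-e_{j(n)}$.
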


\begin{proof} 
\eqref{1.diagonalization} 
	Find an increasing sequence $j(n)$, for $n\in \bbN$, such that $j(0):=0$ and for all $m\leq n$ we have 
	\begin{align}\label{eq.e.estimate}
		\begin{split}
	\|(1-e_{j(n+1)})a_m e_{j(n)}\|<2^{-n-2},\\
	\|(1-e_{j(n+1)})a_m^* e_{j(n)}\|<2^{-n-2}.
	\end{split}
	\end{align}
This is possible because both $a_m e_{j(n)}$ and $a_m^* e_{j(n)}$ belong to the ideal $B$. 
Figure~\ref{Fig.D[E]} adapted from \cite[p. 263]{Fa:STCstar} may be helpful in visualizing the construction to follow.  
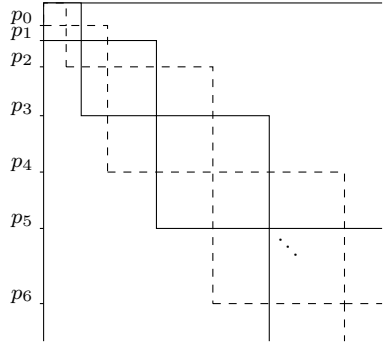
\begin{figure}[h]
	\begin{tikzpicture}[scale=.5, font=\scriptsize]
		\draw (0,0) -- (9,0);
		\draw (0,0) -- (0,-9);
		\draw[fill=white,draw=none] (0,0) rectangle (1,-1);
		\draw (0,0) rectangle (1,-1);
		\draw (1,-1) rectangle (3,-3);
		\draw (3,-3) rectangle (6,-6);
		\draw (6,-6) -- (6,-9);
		\draw (6,-6) -- (9,-6);
		\draw[dashed] (0,0) rectangle (.6,-.6);
		\draw[dashed] (.6,-.6) rectangle (1.7,-1.7);
		\draw[dashed] (1.7,-1.7) rectangle (4.5,-4.5);
		\draw[dashed] (4.5,-4.5) rectangle (8,-8);
		\draw[dashed] (8,-8) -- (8,-9);
		\draw[dashed] (8,-8) -- (9,-8);
		\foreach \x / \y in {0/.6,1/1,2/1.7,3/3,4/4.5,5/6, 6/8}
		{
			\node[anchor=east] at (0,-\y+.2) {$p_{\x}$};
			\draw (-.1,-\y) -- (0,-\y);
		}
		\foreach \z in {.3,.5,.7}
		\node at (6+\z,-6-\z) {$\cdot$};
	\end{tikzpicture}
	\caption{\label{Fig.D[E]} The supports of $a^0_n$ (with respect to any orthonormal basis diagonalizing all $p_n$) are included in the solid square regions and the supports of $a^1_n$ are included in the dashed line regions.}
\end{figure}

	Let $p_n:=e_{j(n+1)}-e_{j(n)}$ and let (the series on the right-hand side of each of the following formulas is  strictly convergent because $p_n$ are orthogonal projections in $B$)
		\begin{align*}
	a_m^0&:=\sum_n (p_{2n} a_m p_{2n} + p_{2n+1}a_m p_{2n} + p_{2n} a_m p_{2n+1}), \\
	a_m^1&:=\sum_n (p_{2n+1} a_m p_{2n+1} + p_{2n+2}a_m p_{2n+1} + p_{2n+1} a_m p_{2n+2}). 
	\end{align*}
Clearly the conditions \eqref{1.1.diagonalization} and \eqref{2.1.diagonalization} are satisfied. 
In order to verify \eqref{3.1.diagonalization},  fix $2k>m$. Then 
\(
a^0_m p_{2k}=(p_{2k}+p_{2k+1})a_m p_{2k}
\)
and
\(
a^1_m p_{2k} = p_{2k-1} a_m p_{2k}\).
Therefore 
\begin{align*}
	(a-a^0_m-a^1_m)p_{2k}
	&=\left(\sum_{i<2k-1} p_i+\sum_{i>2k+1} p_i\right)a_m p_{2k}\\
	&=(e_{j(2k-1)}+(1-e_{j(2k+2)}))a_m p_{2k}. 
\end{align*}
Similarly, $(a-a^0_m-a^1_m)p_{2k+1}
	=(e_{j(2k)}+(1-e_{j(2k+3)}))a_m p_{2k+1}$ for $2k+1>m$, and therefore 
	 $(a-a^0_m-a^1_m)p_{k}=
	 (e_{j(k-1)}+(1-e_{j(k+2)}))a_m p_{k}$ for all $k>m$. 
	 Since $p_{k}=e_{j(k+1)}-e_{j(k)}$, for $k>m$ we have, using \eqref{eq.e.estimate},
\begin{align*}
\|(a-a^0_m-a^1_m)p_{k}\|
&=\|(e_{j(k-1)}+(1-e_{j(k+2)}))a_m p_{k}\|\\
&\leq \|e_{j(k-1)} a_m (1-e_{j(k)})\|+\|(1-e_{j(k+2)}) a_m e_{j(k+1)}\|\\
&<2^{-k-2}+2^{-k-1}<2^{-k}.
\end{align*}
 Therefore all $n>m$ satisfy
\[
\|(a_m-a^0_m-a^1_m)(1-e_{j(n)})\|\leq \sum_{k\geq n} 
\|(a_m-a^0_m-a^1_m)p_k\|<\sum_{k=n}^\infty 2^{-k}=2^{-n+1}. 
\]
Since $n$ was arbitrary, we have  $a_m-a_m^0-a_m^1\in B$ for all $m$, and  \eqref{3.1.diagonalization} follows. 
	
	For \eqref{2.diagonalization}, assume $B=A\otimes \cK$ for a unital \cstar-algebra $A$. Apply \eqref{1.diagonalization} with $e_n=1_A\otimes q_n$, for an increasing approximate unit $q_n$ of $\cK(H)$ consisting of projections to produce first approximations to  $a^0_n$, $a^1_n$, and $p_n$ satisfying \eqref{1.1.diagonalization}--\eqref{3.1.diagonalization}. By the choice of the $e_n$’s, each $p_n$ is of the form $1_A\otimes r_n$ for a projection $r_n$ in $\cK(H)$ such that finite partial sums of $r_n$’s form an approximate identity in $\cK(H)$, which takes care of \eqref{3.2.diagonalization}.

Before the notation gets out of control, we define the \emph{cutdown of $x$ by a projection $q$} to be $qxq$. By replacing $a^l_n$ with $(1-e_{j(n)})a^l_n(1-e_{j(n)})$ for $l=0,1$ (note that the image of $a^l_n$ under the quotient map is unaffected by this change), we may assume that  $p_i a^l_n =a^l_n p_i=0$ for all $i<j(n)$ and $l=0,1$.   Then  in each block of the form $(p_{2i}+p_{2i+1})(A\otimes \cK)(p_{2i}+p_{2i+1})$ only finitely many of the $a_n^0$ have nonzero cutdowns, and in each block of the form $(p_{2i+1}+p_{2i+2})(A\otimes \cK)(p_{2i+1}+p_{2i+2})$ only finitely many of the $a_n^1$ have nonzero cutdowns. Since $A=\lim_l A_l$, the cutdown of each $a^0_n$ is within $1/(i+1)$ from some  
	  \[
	  b(n,i)\in (p_{2i}+p_{2i+1})(A_l\otimes \cK) (p_{2i}+p_{2i+1})
	  \]
	   for a large enough~$l$. The norm of $b(n,i)$ can be assumed to be no greater than $\|a^0_n\|$. 
	   	   Similarly, for every $i\geq j(n)$ there are $l\in \bbN$ and 
	   	   \[
	   	   c(n,i)\in (p_{2i+1}+p_{2i+2}) (A_l\otimes \cK) (p_{2i+1}+p_{2i+2})
	   	   \] 
	   	   which is within $1/(i+1)$ of the cutdown of $a^1_n$ by $p_{2i+1}+p_{2i+2}$ and satisfies $\|c(n,i)\|\leq \|a^1_n\|$. 
	   
	    Let $f(i)$ be the maximum of the $l$'s obtained in this way for all $n$ such that $i\geq j(n)$. For $i<j(n)$ let $b(n,i)=0$
	    and $c(n,i)=0$ (to be precise, this is $0$ in the corresponding cutdown of $A_{f(i)}\otimes \cK$). Thus in each block of the form $(p_{2i}+p_{2i+1})(A\otimes \cK)(p_{2i}+p_{2i+1})$,  every $a^0_n$ has a $1/(i+1)$-approximation  $b(n,i)$ in the corresponding cutdown of $A_{f(i)}\otimes \cK$ and in each block of the form $(p_{2i+1}+p_{2i+2}) (A_j\otimes \cK) (p_{2i+1}+p_{2i+2})$ every $a^1_n$ has a $1/(i+1)$-approximation $c(n,i)$ in the corresponding cutdown of $A_{f(i)}\otimes \cK$.  

A special case of \cite[Lemma~13.1.10]{Fa:STCstar} when $\{e_n\}$ is an increasing sequence of projections implies that for every $n$ the series   $b_n:=\sum_i b(n,i)$ is strictly convergent and therefore $b_n\in \cM(A\otimes \cK)$. Since $b(n,i)$ is within $1/i$ of the cutdown of $a^0_n$ by $p_{2i}+p_{2i+1}$,  we have  $\|(a^0_n-b_n)(1-e_{j(2i)})\|\leq 1/(i+1)$ for all $i$, and therefore  $a^0_n-b_n\in A\otimes \cK$. Moreover,~with $a^0_n$ replaced with $b_n$,  \eqref{1.1.diagonalization} continues to hold, and \eqref{1.2.diagonalization} holds by the choice of the function~$f$.

Similarly,  $c_n:=\sum_i c(n,i)$ defines an element of $\cM(A\otimes \cK)$ such that  $a^1_n-c_n\in A\otimes \cK$ and $c_n$ satisfies \eqref{2.1.diagonalization} and \eqref{2.2.diagonalization}, completing the proof.  
\end{proof}

A notational convention is in order. Suppose that $p_n$ is a sequence of orthogonal projections in $A$ such that $\sum_n p_n=1_{\cM(A\otimes \cK)}$ (such series is convergent in the  strict topology by \cite[Lemma~13.1.11]{Fa:STCstar}). Suppose in addition that $p_n=1_A\otimes r_n$, for $r_n\in \cK$. Then $p_n (A\otimes \cK) p_n$ is isomorphic to $M_m(A)$ (where $m=\rank(r_n)$, but the exact value of $m$ is of no importance for us) and we let 
\begin{equation}\label{Eq.Upsilon}
\Upsilon_n \colon A\to p_n (A\otimes\cK) p_n
\end{equation}
 be the diagonal embedding, $a\mapsto a\otimes 1_m$.

\begin{proof}[Proof of Theorem~\ref{T.Q(A)}] Suppose that $A$ and $D$ are separable and unital, $D$ is strongly self-absorbing, and $A$ is tensorially $D$-absorbing. 
Fix a separable \cstar-subalgebra $B$ of $\cQ(A\otimes \cK)$, and let $\{a_n\mid n\in \bbN\}$ be a subset of $\cM(A\otimes \cK)$ such that $\pi(a_n)$, for $n\in \bbN$, is dense in $B$. Write $A=\lim_n A_n$ as in the paragraph preceding Lemma~\ref{L.discretization}, so that $A_{n+1}=A_n\otimes D_n$ with $D_n\cong D$ for all $n$.  By Lemma~\ref{L.diagonalization} \eqref{2.diagonalization}, there are projections $p_i$, for $i\in \bbN$, in $A\otimes \cK$ such that $\sum_i p_i=1_{\cM(A\otimes \cK)}$ (the sum is convergent in the strict topology), and there are  $f\colon \bbN\to \bbN$ and $a_n^0$, $a_n^1$ in $\cM(A\otimes \cK)$ such that the following holds for all $n$ and all $i$ (all infinite sums are strictly convergent).   
\begin{enumerate}
	\item $a_n-(a_n^0+a_n^1)\in \cM(A\otimes \cK)$.
	\item $a_n^0=\sum_i (p_{2i}+p_{2i+1})a_n^0 (p_{2i}+p_{2i+1})$. 
	\item $a_n^1=\sum_i (p_{2i+1}+p_{2i+2})a_n^1 (p_{2i+1}+p_{2i+2})$. 
	\item $(p_{2i}+p_{2i+1})a_n^0 (p_{2i}+p_{2i+1})$ belongs to 
	\[
	(p_{2i}+p_{2i+1})( A_{f(i)}\otimes\cK)(p_{2i}+p_{2i+1})
	\] 
	\item 	$(p_{2i+1}+p_{2i+2})a_n^1 (p_{2i+1}+p_{2i+2})$ belongs to 
	\[
(p_{2i+1}+p_{2i+2})(A_{f(i)}\otimes\cK)(p_{2i+1}+p_{2i+2}).
	\] 
\end{enumerate}
 Let $F_i$, for $i\in \bbN$, be an increasing sequence of finite subsets of the unit ball of $D$ with dense union. Let $\varepsilon(j)=1/(j+1)$. Let $l(0)=0$ and (with $k(F_j,\varepsilon(j))$ as provided by Lemma~\ref{L.discretization}) let  $l(j+1):=l(j)+k(F_j,\varepsilon(j))$ for $j\geq 0$. Also let $f^+(j):=f(l(j))$ for $j\in \bbN$. 
 Define $\Psi\colon D\to \cM(A\otimes \cK)$ as follows.

 Fix $d\in D$. We will have $\Psi(d)=\sum_n x_n p_n$, for some $x_n$ in $p_n (A\otimes \cK) p_n$ to be determined shortly. Let $\Upsilon_n \colon A\to p_n (A\otimes\cK) p_n$ be the diagonal  embedding defined in \eqref{Eq.Upsilon}.  Thus for every $j\in \bbN$ the embedding $\Theta_j$ of $D$ into $A$ defined by identifying it with $D_j$ (the $j$-th copy of $D$ in $D^{\otimes\bbN}$)  used earlier determines a unital embedding 
\[
 \Upsilon_n\circ \Theta_j\colon D\to p_n (A\otimes \cK) p_n. 
 \]
We will need to twist this embedding a little. For $i\in \bbN$ and $j<k(F_i,\varepsilon(i))$ let, using $k(F_i,\varepsilon(i))$ and  the unitaries $w^{m,n,\varepsilon(i)}_j(F_i)$ produced by Lemma~\ref{L.discretization},
 \begin{equation}\label{eq.v.l(i)}
v_{l(i)+j}:= w^{f^+(i),f^+(i+1),\varepsilon(i)}_j(F_i). 
\end{equation}
 Since  every natural number can be uniquely represented in the form $l(i)+j$ for some $i$ and  $j<k(F_i,\varepsilon(i))$, this defines $v_n$ for all $n\in \bbN$ (analogous remark applies to the definition of~$\Psi_n$ below).
Then for all $i$ and $j<k(F_i,\varepsilon(i))-1$ we have 
\begin{equation}\label{Eq.v}
	\|v_{l(i)+j}-v_{l(i)+j+1}\|<\varepsilon(i).
\end{equation}
 Also, by \eqref{4.discretization} of Lemma~\ref{L.discretization},  for all $i$ we have 
\begin{equation}\label{Eq.Fi}
\|(\Ad v_{l(i)+k(F_i,\varepsilon(i))-1}) \circ  \Theta_{f^+(i)}(d)-\Theta_{f^+(i+1)}(d)\|<\varepsilon(i) \text{ for all } d\in F_i.
\end{equation}
For $n$ in $\bbN$, with $n=l(i)+j$ for unique $i$  and $j<k(F_i,\varepsilon(i))$, 
\begin{equation}
	\label{eq.Psi}
\Psi_{l(i)+j}(d):=(\Ad v_{l(i)+j}) \circ \Theta_{f^+(i)}(d)
\end{equation}
defines a $^*$-homomorphism $\Psi_n\colon D\to A$ for all $n$.

\begin{claim}\label{Eq.Psi-n-n+1} 
	\begin{enumerate}
		\item For all $i$ and $j<k(F_i,\varepsilon(i))$, the range of $\Psi_{l(i)+j}$ is included in $A\cap A_{f(i)}’$. 
	\item 	For all $n$ we have 
	$\|\Psi_n(d)-\Psi_{n+1}(d)\|\leq 2\varepsilon(i)$. 
	\end{enumerate}
\end{claim}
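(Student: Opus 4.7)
Both parts follow by unwinding the definitions in \eqref{eq.v.l(i)}--\eqref{eq.Psi}; the only subtle point is the transition between consecutive blocks $[l(i), l(i+1))$ in part~(2).

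For (1), I would simply observe that $\Psi_{l(i)+j}(d) = v_{l(i)+j}\,\Theta_{f^+(i)}(d)\,v_{l(i)+j}^*$, where $v_{l(i)+j} = w_j^{f^+(i), f^+(i+1), \varepsilon(i)}(F_i)$. By Lemma~\ref{L.discretization}(3) this unitary lies in $A \cap A_{f^+(i)}'$, and by its very definition $\Theta_{f^+(i)}$ places $d$ on the $f^+(i)$-th tensor coordinate of $D^{\otimes \bbN}$, putting its range inside $A \cap A_{f^+(i)}'$ (the subalgebra $A_{f^+(i)}$ involves only coordinates strictly below $f^+(i)$). Since $A \cap A_{f^+(i)}'$ is closed under conjugation by its own unitaries, $\Psi_{l(i)+j}(d)$ belongs to $A \cap A_{f^+(i)}'$, which is what (1) asserts.

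For (2), fix $n$, write $n = l(i)+j$ for the unique pair with $0 \le j < k(F_i,\varepsilon(i))$, and assume $\|d\|\le 1$. I would split into two cases. When $j+1 < k(F_i,\varepsilon(i))$, so that $n+1$ is still in the $i$-th block, both $\Psi_n$ and $\Psi_{n+1}$ conjugate the same element $\Theta_{f^+(i)}(d)$ by the unitaries $v_n$ and $v_{n+1}$; the estimate $\|v_n-v_{n+1}\|<\varepsilon(i)$ from \eqref{Eq.v} together with the standard inequality $\|vxv^*-uxu^*\| \le 2\|v-u\|\|x\|$ for unitaries $u,v$ immediately yields the bound $2\varepsilon(i)$. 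When $j+1 = k(F_i,\varepsilon(i))$, so that $n+1 = l(i+1)$ corresponds to the index $j=0$ of block $i+1$, Lemma~\ref{L.discretization}(1) gives $v_{n+1} = w_0^{f^+(i+1),f^+(i+2),\varepsilon(i+1)}(F_{i+1}) = 1$, whence $\Psi_{n+1}(d) = \Theta_{f^+(i+1)}(d)$; the required bound $\|\Psi_n(d)-\Psi_{n+1}(d)\| < \varepsilon(i)$ is then precisely the conclusion \eqref{Eq.Fi} of the discretization lemma, provided we also take $d \in F_i$.

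The boundary case between consecutive blocks is the whole reason for the elaborate definition \eqref{eq.v.l(i)} of the $v_n$'s via Lemma~\ref{L.discretization}: the arithmetic was set up exactly so that the last unitary of block $i$ rotates $\Theta_{f^+(i)}(F_i)$ into an $\varepsilon(i)$-approximation of $\Theta_{f^+(i+1)}(F_i)$, matching the next block which starts with the identity. Once that point is clear, nothing of real substance remains; I do not anticipate a serious obstacle beyond keeping the indices straight.
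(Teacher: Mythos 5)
Your proposal is correct and follows essentially the same route as the paper: part (1) from the fact that both $\Theta_{f^+(i)}$ and the unitaries $v_{l(i)+j}$ land in $A\cap A_{f^+(i)}'$, and part (2) by splitting into the within-block case (using $\|v_n-v_{n+1}\|<\varepsilon(i)$ and the $2\|v-u\|\|x\|$ estimate) and the block-boundary case (using $v_{l(i+1)}=1$ and conclusion \eqref{4.discretization} of Lemma~\ref{L.discretization}). Your explicit remark that the boundary estimate is only guaranteed for $d\in F_i$ is a fair reading of the (slightly informally quantified) claim and matches how the estimate is actually used later in the proof of Claim~\ref{C.Claim}.
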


\begin{proof}The first part follows immediately from the fact that both the range of  $\Theta_{f^+(i)}$ and all $v_{l(i)+j}$ are included in $A\cap A_{f^+(i)}’$.  

Fix $n$. By \eqref{eq.v.l(i)}, if $n=l(i)+j$ for some $j$ such that $0\leq j< k(F_i,\varepsilon(i))-1$, then  $\|v_{l(i)+j}-v_{l(i)+j+1}\|<\varepsilon(i)$ and therefore  $\|\Psi_n(d)-\Psi_{n+1}(d)\|<2\varepsilon(i)$ for all $d$. If $n=l(i)+k(F_i,\varepsilon(i))-1$, then  $v_{n+1}=v_{l(i+1)}=1$ and $v_n=v_{l(i)+k(F_i,\varepsilon(i))-1}=w^{f^+(i),f^+(i+1),\varepsilon(i)}_{k(F_i,\varepsilon(i)-1}(F_i)$ satisfies, by \eqref{4.discretization} of Lemma~\ref{L.discretization}, 
\[
\|(\Ad v_n\circ  \Theta_{f^+(i)})(d)-\Theta_{f^+(i+1)}(d)\|<\varepsilon(i),
\]
and the left-hand side of this formula is equal to $\|\Psi_n(d)-\Psi_{n+1}(d)\|$ (see~\eqref{eq.Psi}), giving the required estimate. 
\end{proof}

Let (the right-hand side is again strictly convergent for all~$d$)
\[
\Psi(d):=\sum_n \Upsilon_n\circ \Psi_n(d).
\]
This is a $^*$-homomorphism of $D$ into $\cM(A\otimes \cK)$ such that 
\[
\Psi(1_D)=\sum_n p_n=1_{\cM(A\otimes\cK)}.
\]
 Since each $\Psi_n$ is an isometry we have $\Psi[D]\cap (A\otimes\cK)=\{0\}$, and therefore $\pi\circ \Psi$ defines an injective, unital $^*$-homomorphism of $D$ into $\cQ(A\otimes \cK)$.

\begin{claim} \label{C.Claim}
	 For every $n=l(i)+j$ with $j<k(\varepsilon(i))$ and  $d\in F_i$ we have  
\begin{enumerate}
\item \label{1.Claim}
$\Psi(d) p_{n}\in 
p_{n} (A\otimes \cK) p_{n}\cap (p_{n} (A_{f^+(i)}\otimes\cK)p_{n})'$.

	\item \label{2.Claim} If  $a\in p_{n+1} (A_{f^+(i)}\otimes \cK) p_{n}$ then 
	\[
	\|a (\Upsilon_n \circ \Psi_n(d))-(\Upsilon_{n+1}\circ \Psi_{n+1}(d))a\|<2\varepsilon(i)\|a\|.
	\] 
	\item \label{3.Claim} For all $m\in \bbN$ and $d\in D$, $[a_m,\Psi(d)]\in A\otimes \cK$. 
\end{enumerate} 
\end{claim}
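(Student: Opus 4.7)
The first two parts follow from the tensor-product structure. Under the identification $p_n(A\otimes\cK)p_n \cong M_{\rank(r_n)}(A)$, the diagonal embedding takes the simple form $\Upsilon_n(x) = x\otimes r_n$. By orthogonality of the $p_k$'s, $\Psi(d)p_n = \Upsilon_n(\Psi_n(d))$; since every element of $p_n(A_{f^+(i)}\otimes\cK)p_n$ has its ``matrix entries'' in $A_{f^+(i)}$ and Claim~\ref{Eq.Psi-n-n+1} places $\Psi_n(d)$ in $A\cap A_{f^+(i)}'$, this commutes with all such elements, yielding (1). For (2), with $a=p_{n+1}ap_n$ in $A_{f^+(i)}\otimes\cK$, the identities $a\Upsilon_n(\Psi_n(d)) = (\Psi_n(d)\otimes 1)a$ and $\Upsilon_{n+1}(\Psi_{n+1}(d))a = (\Psi_{n+1}(d)\otimes 1)a$ follow from the facts that $a p_n = a = p_{n+1}a$ and that $\Psi_n(d)$ and $\Psi_{n+1}(d)$ commute with the matrix entries of $a$; for $\Psi_{n+1}(d)$ one uses either that $n+1$ still lies in the block $[l(i),l(i+1))$, or that $n+1=l(i+1)$ in which case $A_{f^+(i+1)}\supseteq A_{f^+(i)}$. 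The difference is $((\Psi_n(d)-\Psi_{n+1}(d))\otimes 1)a$, of norm at most $\|\Psi_n(d)-\Psi_{n+1}(d)\|\,\|a\| < 2\varepsilon(i)\|a\|$ by Claim~\ref{Eq.Psi-n-n+1}.

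For (3), I would first apply Lemma~\ref{L.diagonalization}\eqref{2.diagonalization} to write $a_m = a_m^0 + a_m^1 + K$ with $K\in A\otimes\cK$, reducing to proving $[a_m^j,\Psi(d)]\in A\otimes\cK$ for $j\in\{0,1\}$; the case $j=0$ is representative. Writing $a_m^0 = \sum_i c_i^0$ strictly, with $c_i^0 \in (p_{2i}+p_{2i+1})(A_{f(i)}\otimes\cK)(p_{2i}+p_{2i+1})$, the pairwise disjointness of the supports of the $c_i^0$ means the series for $[a_m^0,\Psi(d)]$ has norm equal to $\sup_i \|[c_i^0,\Psi(d)]\|$; so it suffices to show $\|[c_i^0,\Psi(d)]\|\to 0$. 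By density of $\bigcup_i F_i$ in the unit ball of $D$, continuity of the commutator in $d$, and norm-closedness of $A\otimes\cK$, I may assume $d\in F_{i_0}$ for some fixed $i_0$, so that $d\in F_{i'}$ for every $i'\ge i_0$. I then decompose $c_i^0 = \sum_{k,l\in\{0,1\}} c_i^{kl}$ with $c_i^{kl} := p_{2i+k}c_i^0 p_{2i+l}$, and let $i'$ denote the index with $2i\in[l(i'),l(i'+1))$: part (1) applied at $n=2i+k$ makes the diagonal commutators $[c_i^{kk},\Upsilon_{2i+k}\Psi_{2i+k}(d)]$ vanish provided $A_{f(i)}\subseteq A_{f^+(i')}$, while part (2) and its adjoint bound each off-diagonal contribution by $2\varepsilon(i')\|c_i^{kl}\|$, which tends to $0$ since $i'\to\infty$ with $i$.

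The main obstacle is precisely the containment $A_{f(i)}\subseteq A_{f^+(i')}$ required for the diagonal pieces. Since one may take $f$ to be nondecreasing, this containment reduces to $i\le l(i')$, while from $l(i')\le 2i<l(i'+1)$ one only extracts $l(i') > 2i - k(F_{i'},\varepsilon(i'))$. Hence the matching is controlled by the growth rate of the Lemma~\ref{L.discretization} constants $k(F_j,\varepsilon(j))$ relative to $f$. To arrange the inequality for all sufficiently large $i$, one can either replace $f$ by a faster-growing nondecreasing majorant $\tilde f\ge f$ (which remains compatible with the decomposition of Lemma~\ref{L.diagonalization}, since $A_{f(i)}\subseteq A_{\tilde f(i)}$), or choose the $F_j$'s at the outset so that $k(F_j,\varepsilon(j))$ grows slowly in $j$. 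Either adjustment removes the obstruction, and the diagonal commutators then vanish for all large $i$, completing the proof of part (3) and the claim.
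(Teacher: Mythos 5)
Your argument for all three parts is the paper's argument: (1) from $\Psi_n(d)\in A\cap A_{f^+(i)}'$ together with the fact that $\Upsilon_n$ is a diagonal corner embedding, (2) from the estimate $\|\Psi_n(d)-\Psi_{n+1}(d)\|<2\varepsilon(i)$ of Claim~\ref{Eq.Psi-n-n+1}, and (3) by splitting $[a_m^0,\Psi(d)]$ into diagonal blocks, which vanish by (1), and off-diagonal blocks, which are controlled by (2) and supported in orthogonal corners, so that the tail of the strictly convergent series tends to zero in norm and the commutator lands in $A\otimes\cK$.

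The ``obstacle'' you isolate is genuine, and it is in fact elided in the published proof, which simply asserts that $p_n\Psi(d)p_n$ and $p_na_m^0p_n$ commute. Since $p_na_m^0p_n$ is only known to lie in the cutdown of $A_{f(\lfloor n/2\rfloor)}\otimes\cK$ while $\Psi_n(d)\in D_{f^+(i)}\otimes D_{f^+(i+1)}$ commutes with $A_{f^+(i)}=A_{f(l(i))}$, one needs (for nondecreasing $f$) $\lfloor n/2\rfloor\leq l(i)$ for $n\in[l(i),l(i+1))$, which can fail when $k(F_i,\varepsilon(i))$ outgrows $l(i)=\sum_{j<i}k(F_j,\varepsilon(j))$. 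Your diagnosis is right, but your first repair is not: replacing $f$ by a faster-growing nondecreasing majorant $\tilde f$ changes the required inequality $\tilde f(\lfloor n/2\rfloor)\leq\tilde f(l(i))$ not at all, since both sides grow together; it helps only if $\tilde f$ is arranged to be \emph{constant} on each block $[l(i),l(i+1))$. The cleanest fix is to redefine $f^+(i):=f(l(i+1))$ (equivalently $\max\{f(s):s<l(i+1)\}$): Claim~\ref{Eq.Psi-n-n+1} and parts (1)--(2) survive verbatim because $\Psi_n(d)$ still commutes with $A_{f^+(i)}$, and now $\lfloor n/2\rfloor\leq n<l(i+1)$ yields the containment $A_{f(\lfloor n/2\rfloor)}\subseteq A_{f^+(i)}$ for free, for the off-diagonal pieces $p_{2s+1}a_m^0p_{2s}$ as well as the diagonal ones (you state the containment only for the diagonal commutators, but part (2) needs it too). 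Your second repair---padding so that $k(F_i,\varepsilon(i))\leq l(i)$, e.g.\ by repeating the pairs $(F_j,\varepsilon(j))$---also works. Either way the issue is one of bookkeeping, not of substance, and apart from this your proof coincides with the paper's.
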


\begin{proof} \eqref{1.Claim} follows from the first part of Claim~\ref{Eq.Psi-n-n+1}  and the fact that $\Upsilon_n$ is an isomorphic embedding of $A$ into a corner of $A\otimes \cK$. 

\eqref{2.Claim} Considering $a$ as a matrix over $A_{f^+(i)}$, by the choice of $v_n$ and $\Theta_{f^+(i)}$  all of its entries commute with $\Psi_n(d)=(\Ad v_n )\circ \Theta_{f^+(i)}(d)$. Therefore 
\[
a  (\Upsilon_n\circ \Psi_n(d))=(\Upsilon_{n+1}\circ\Psi_n(d))a. 
\]
The second part of Claim~\ref{Eq.Psi-n-n+1} implies that $\|\Psi_{n}(d)-\Psi_{n+1}(d)\|<2\varepsilon(i)$,  therefore $\|(\Upsilon_{n+1}\circ \Psi_{n+1}(d))a-(\Upsilon_{n+1}\circ \Psi_{n}(d))a\|<2\varepsilon(i)$, and the claim follows. 

\eqref{3.Claim}
Since $a_m-a^0_m-a^1_m\in A\otimes \cK$, it suffices to prove that for all $m$ and $d\in D$, both $[a_m^0,\Psi(d)]$ and $[a_m^1,\Psi(d)]$ belong to $A\otimes \cK$. By density it suffices to prove this for all $d\in \bigcup_i F_i$. Fix $i$, $d\in F_i$, and  $m$ and consider the commutator $[a_m^0, \Psi(d)]$.

 We have $\Psi(d)=\sum_n p_n \Psi(d) p_n$ and $a^0_m=\sum_n (p_{2n}+p_{2n+1})a^0_m (p_{2n}+p_{2n+1})$. Thus for all $n$ and $n’$ such that $|n-n’|\geq 2$ we have  $p_n [a^0_m, \Psi(d)]p_{n’}=0$.    By  (1), $p_n \Psi(d) p_n$ and $p_n a^0_m p_n$ commute for all $n$, hence $p_n [a^0_m, \Psi(d)]p_{n}=0$ for all $n$. Therefore, after rearranging the strictly convergent sum we  have  that $[a^0_m,\Psi(d)]$ is equal to the following. 
\begin{multline}\label{Eq.commutator}
\sum_n (p_{2n+1} a_m^0 p_{2n} \Psi(d)- \Psi(d)p_{2n+1} a_m^0 p_{2n})\\ + 
\sum_n (p_{2n} a_m^0 p_{2n+1} \Psi(d) - \Psi(d)p_{2n} a_m^0 p_{2n+1}).  
\end{multline}
Then \eqref{2.Claim} implies that for $d\in F_i$ the $n$th summand of the first infinite sum has norm smaller than $2\varepsilon(i)\|a_m^0\|$ (for $i$ such that  $l(i)\leq 2n<l(i+1)$). Since $\varepsilon(i)\to 0$ and the projections $p_n$ are orthogonal, this implies that the first sum strictly converges to an element of $A\otimes \cK$ for all $d\in \bigcup_i F_i$, and therefore for all $d\in D$.  Analogous argument shows that the second sum in \eqref{Eq.commutator} also belongs to $A\otimes \cK$, and therefore that $[a_m^0,\Psi(d)]\in A\otimes \cK$.
 
  Analogous argument gives $[a_m^1,\Psi(d)]\in A\otimes \cK$ for all $m$ and all $d\in D$, concluding the proof. \end{proof}
  
  Since $B$ is generated by $\pi(a_m)$, for $m\in \bbN$,  the unital copy of $D$ that is the range of $\pi\circ \Psi$  is in the relative commutant of $B$, as required.  Since $B$ was an arbitrary separable \cstar-subalgebra of $\cQ(A\otimes \cK)$, this completes the proof. 
\end{proof}

\section{Other applications}

 It is well-known that all embeddings of a strongly self-absorbing \cstar-algebra $D$ into an ultrapower of a $D$-absorbing \cstar-algebra $A$ are unitarily equivalent (\cite[Theorem~7.2.2]{Ror:Classification},  \cite{effros1978c}). Analogous results hold for embeddings of~$D$ into the asymptotic  sequence algebra of $A$. Corollary~\ref{C1} below shows that this holds for  path algebras and coronas and it  appears to be new. In the original version of the present paper I only claimed approximate unitary equivalence in both (1) and (2). The proof provided clearly implied asymptotic unitary equivalence, but Jamie Gabe pointed out that one can even obtain unitary equivalence.\footnote{At first this may appear to be interesting because neither $A_{\text\faForward}$ nor $\cQ(A\otimes \cK)$ is likely to be fully (or even quantifier-free) countably saturated (see \cite[\S 15 and \S 16]{Fa:STCstar}).
 	 	However, as the referee kindly pointed out, the model-theoretic point of view is misleading because  this is proven by a standard reindexing argument similar to one used below in the proof of Corollary~\ref{C1}. 
 	  } 
   His argument is included with his kind permission.   

\begin{corollary}
	\label{C1} Suppose that $D$ is a separable, unital,   and strongly self-absorbing  \cstar-algebra. If $A$ is separable, unital, and tensorially $D$-absorbing then the following holds. 
 \begin{enumerate}
\item Every two unital embeddings of $D$ into  $A_{\text\faForward}$ are unitarily equivalent. 
\item Every two unital embeddings of $D$ into  $\cQ(A\otimes \cK)$ are unitarily equivalent. 
\end{enumerate}
\end{corollary}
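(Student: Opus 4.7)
My plan is to prove both parts of Corollary~\ref{C1} simultaneously by the standard intertwining argument for strongly self-absorbing \cstar-algebras, combined with the $D$-saturation established in Theorems~\ref{T.Ainfty} and~\ref{T.Q(A)}, and then to promote the resulting asymptotic unitary equivalence to exact unitary equivalence via a reindexing argument in the spirit of the proof of Theorem~\ref{T.Q(A)}.

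\textbf{Step 1 (common auxiliary embedding).} Let $\phi_0,\phi_1\colon D\to C$ be two unital $^*$-homomorphisms, where $C$ denotes either $A_{\text\faForward}$ or $\cQ(A\otimes\cK)$. Let $B:=\cst(\phi_0[D]\cup\phi_1[D])$, a separable subalgebra of $C$. Theorem~\ref{T.Ainfty} (resp.\ Theorem~\ref{T.Q(A)}) produces a unital embedding $\psi\colon D\to C\cap B'$. It suffices to show that $\phi_i$ is unitarily equivalent to $\psi$ for each $i$, since then composing the two intertwining unitaries yields $\phi_0\sim_u\phi_1$.

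\textbf{Step 2 (asymptotic intertwining via the flip).} Fix $i$. Because $\psi[D]$ and $\phi_i[D]$ commute elementwise, they induce a unital $^*$-homomorphism $\Phi_i\colon D\otimes D\to C$ with $\Phi_i(d\otimes 1)=\phi_i(d)$ and $\Phi_i(1\otimes d)=\psi(d)$. Since $D$ is $K_1$-injective (\cite[Remark~3.3]{winter2011strongly}), \cite[Theorem~2.2]{dadarlat2009kk} gives a continuous path of unitaries $w_t\in D\otimes D$, $t\in[0,1)$, with $w_0=1$ and $\lim_{t\to 1}\|w_t(d\otimes 1)w_t^*-1\otimes d\|=0$ for every $d\in D$. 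Applying $\Phi_i$ produces a continuous path of unitaries $v^{(i)}_t:=\Phi_i(w_t)\in C$ with $v^{(i)}_0=1_C$ and
\[
\lim_{t\to 1}\bigl\|v^{(i)}_t\phi_i(d)(v^{(i)}_t)^*-\psi(d)\bigr\|=0 \quad \text{for all }d\in D.
\]
Thus $\phi_i$ and $\psi$ are asymptotically unitarily equivalent in $C$.

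\textbf{Step 3 (reindexing to exact equivalence).} To extract a single intertwining unitary in $C$, I would use the reindexing argument foreshadowed in the footnote preceding Corollary~\ref{C1}. For $C=A_{\text\faForward}$, lift the continuous family $(v^{(i)}_t)_{t\in[0,1)}$ to a jointly continuous family of unitaries $\tilde v^{(i)}\colon[0,1)\times[0,\infty)\to U(A)$ with $\tilde v^{(i)}(0,\cdot)\equiv 1$; then, choosing a continuous strictly increasing $T\colon[0,\infty)\to[0,1)$ tending to $1$ slowly enough, the path $s\mapsto\tilde v^{(i)}(T(s),s)$ descends to a unitary $u_i\in A_{\text\faForward}$ satisfying $u_i\phi_i(d)u_i^*=\psi(d)$ for every $d\in D$. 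For $C=\cQ(A\otimes\cK)$, pick $t_n\nearrow 1$ and an increasing exhaustion $F_n\Subset D$ of a dense subset of the unit ball with $\|v^{(i)}_{t_n}\phi_i(d)(v^{(i)}_{t_n})^*-\psi(d)\|<1/n$ for $d\in F_n$; lift each $v^{(i)}_{t_n}$ to a unitary in $\cM(A\otimes\cK)$ and splice these lifts along orthogonal diagonal blocks adapted to an approximate identity of projections in $A\otimes\cK$, exactly as in Lemma~\ref{L.diagonalization} and \cite[Lemma~1.2]{Fa:All}, to obtain a single unitary $u_i\in\cM(A\otimes\cK)$ whose image in $\cQ(A\otimes\cK)$ intertwines $\phi_i$ and $\psi$. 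In either case $u_1u_0^*$ implements $\phi_0\sim_u\phi_1$.

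\textbf{Main obstacle.} The technical core is Step 3: ensuring that the spliced, respectively reindexed, unitary intertwines $\phi_i$ and $\psi$ modulo the correct ideal, namely $C_0([0,\infty),A)$ in the path-algebra case and $A\otimes\cK$ in the corona case. For $A_{\text\faForward}$ the main point is to choose the reparametrization $T$ slowly enough that the error across any fixed compact subset of the $s$-axis stays bounded, while still approaching~$1$ in the limit. For $\cQ(A\otimes\cK)$ the block sizes in the splicing must be tuned to outrun both the error estimates from Step 2 and the speed of the approximate identity; the lifting from $\cQ(A\otimes\cK)$ to $\cM(A\otimes\cK)$ (via polar decomposition of a contractive lift, if $D$ being $K_1$-injective is not enough to lift the unitaries directly) is an additional but standard wrinkle.
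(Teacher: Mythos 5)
Your Steps 1 and 2 coincide with the paper's argument: reduce via $D$-saturation (Theorems~\ref{T.Ainfty} and~\ref{T.Q(A)}) to the case of two \emph{commuting} unital copies of $D$, and use $K_1$-injectivity together with \cite[Theorem~2.2]{dadarlat2009kk} to conclude that they are asymptotically unitarily equivalent. The divergence is in Step 3. The paper upgrades asymptotic to exact unitary equivalence by a single uniform device, Lemma~\ref{L.folding}: both $A_{\text\faForward}$ and $\cQ(A\otimes \cK)$ are coronas of $\sigma$-unital algebras $E$, so one passes to $\cQ(E)_{\text\faForward}$, where the continuous path becomes one unitary, and folds back into $\cQ(E)$ using \cite[Proposition~1.4]{PhWe:Calkin}. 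Your path-algebra half of Step 3 is essentially a correct by-hand version of this: the path lifts to a continuous path of unitaries in $C_b([0,\infty),A)$ because $v^{(i)}_0=1$, and the diagonal reparametrization works once you note that the required uniformity over compact $t$-intervals follows from equicontinuity of $t\mapsto \tilde v^{(i)}(t,\cdot)$.

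The corona half of Step 3, however, has a genuine gap as written. First, if $u\in\cM(A\otimes\cK)$ is a unitary and $(p_n)$ are orthogonal projections summing strictly to $1$, the compressions $p_nup_n$ are in general only contractions, so ``splicing the lifts along orthogonal diagonal blocks'' does not produce a unitary; the unitaries $v^{(i)}_{t_n}=\Phi_i(w_{t_n})$ are built from the arbitrary embeddings $\phi_i$ and from $\psi$, and have no reason to be block-diagonal with respect to any approximate unit. Second, even granting block-diagonal unitaries $u=\sum_n p_nu_np_n$, conjugation by $u$ only controls the block-diagonal part of $\phi_i(d)$: after applying Lemma~\ref{L.diagonalization} the off-diagonal pieces $p_{n\pm1}\phi_i(d)p_n$ produce cross terms of the form $u_{n\pm1}(\cdot)u_n^*$, which are small modulo $A\otimes\cK$ only if consecutive $u_n$'s are close \emph{and} $\psi(d)$ is itself approximately block-diagonal for the same blocks. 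Making all of this precise amounts to redoing, for the unitaries, the bookkeeping that Lemmas~\ref{L.discretization} and~\ref{L.diagonalization} carry out for the homomorphism $\Psi$ in the proof of Theorem~\ref{T.Q(A)}. The folding lemma (Lemma~\ref{L.folding}, via \cite[Proposition~1.4]{PhWe:Calkin}) packages exactly this reindexing once and for all, treats both cases simultaneously, and is what should be invoked here.
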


The following lemma is a straightforward application of  reindexing in coronas of $\sigma$-unital \cstar-algebras of Phillips and Weaver,  known as `(un)folding’, 
 and it is the key point of the proof of Corollary~\ref{C1}. 

\begin{lemma}\label{L.folding} Suppose that $E$ is a $\sigma$-unital, non-unital \cstar-algebra and that~$D$ is a separable, unital \cstar-algebra. Two unital copies of $D$ in $\cQ(E)$ are asymptotically unitarily equivalent if and only if they are unitarily equivalent. 
\end{lemma}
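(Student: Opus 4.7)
\begin{proofsketch}
The `if' direction is immediate: a unitary $u \in \cQ(E)$ realizing the equivalence gives the constant continuous path $u_t := u$. For the converse, I will apply a folding argument in the spirit of Phillips--Weaver.

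Assume a continuous path $(u_t)_{t \geq 0}$ of unitaries in $\cQ(E)$ satisfies $\lim_{t\to\infty}\|u_t\Phi_1(d)-\Phi_2(d)u_t\|=0$ for every $d\in D$. I fix a countable dense subset $\{d_k : k \in \bbN\}$ of the unit ball of $D$, lifts $\tilde\Phi_i(d_k)\in\cM(E)$, and (using $\sigma$-unitality) a strictly positive $h\in E$. A continuous positive partition of unity $(\phi_n)$ on $[0,\|h\|]$ with finite overlap and supports accumulating only at $0$ then yields $e_n := \phi_n(h)\in E$ with $\sum_n e_n = 1$ strictly in $\cM(E)$; by a standard argument, after passing to a subsequence one may further arrange that $(e_n)$ is quasi-central against the separable \cstar-subalgebra of $\cM(E)$ generated by the lifts of interest.

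I then choose times $t_n\nearrow\infty$ and contractive lifts $\tilde u_n\in\cM(E)$ of $u_{t_n}$ such that, modulo $E$, $\tilde u_n^* \tilde u_n = 1 = \tilde u_n\tilde u_n^*$ and $\|\tilde u_n\tilde\Phi_1(d_k)-\tilde\Phi_2(d_k)\tilde u_n\|<2^{-n}$ for $k\leq n$. The folded element is
\[
w := \sum_n e_n^{1/2}\,\tilde u_n\,e_n^{1/2}\in\cM(E),
\]
strictly convergent by the finite-overlap condition. The key computation is that, modulo $E$, $w^*w \approx \sum_n e_n^{1/2}\tilde u_n^*\tilde u_n\, e_n^{1/2}\approx \sum_n e_n = 1$, and similarly $ww^* \approx 1$, using quasi-centrality of $(e_n)$ to discard off-diagonal terms and $\tilde u_n^*\tilde u_n - 1 \in E$ to collapse the diagonal ones. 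Hence $\pi(w)$ is a unitary in $\cQ(E)$. An analogous computation controlled by the $2^{-n}$ estimate gives $w\tilde\Phi_1(d_k)-\tilde\Phi_2(d_k)w\in E$ for each $k$; by density in $D$, $\pi(w)$ implements the desired unitary equivalence.

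The main obstacle will be the bookkeeping: arranging that quasi-centrality of $(e_n)$ against the (enlarged) separable parameter set, strict convergence in $\cM(E)$, and the $2^{-n}$ control on commutators combine to force $w^*w - 1$, $ww^* - 1$, and each $w\tilde\Phi_1(d_k)-\tilde\Phi_2(d_k)w$ to genuinely lie in $E$, rather than merely have small norm. This is where $\sigma$-unitality is essential: it supplies the strictly positive $h$ and thereby the quasi-central approximate identity with the finite-overlap structure needed to carry out the folding.
\end{proofsketch}
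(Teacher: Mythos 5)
Your overall strategy---fold a discretization of the path against a quasicentral partition of unity---is the right one; it is essentially the content of \cite[Proposition~1.4]{PhWe:Calkin}, which the paper's proof simply invokes after packaging the path $(u_t)$ as a single unitary in the path algebra $\cQ(E)_{\text\faForward}$. But your sketch has a genuine gap at the one point where the hypothesis of \emph{asymptotic} (rather than merely sequential approximate) unitary equivalence must enter: the verification that $\pi(w)$ is unitary. In the expansion of $w^*w$ the diagonal terms give $\sum_n e_n^{1/2}\tilde u_n^*e_n\tilde u_ne_n^{1/2}\approx\sum_n e_n^2$, which is \emph{not} $1$; and the adjacent cross terms $e_n^{1/2}\tilde u_n^*e_n^{1/2}e_{n+1}^{1/2}\tilde u_{n+1}e_{n+1}^{1/2}$ cannot be discarded, since $e_ne_{n+1}\neq0$ (consecutive overlaps are unavoidable if continuous bump functions are to sum to $1$). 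Quasicentrality only reduces such a term to approximately $\tilde u_n^*\tilde u_{n+1}\,e_ne_{n+1}$; to recover $w^*w\approx\bigl(\sum_ne_n\bigr)^2=1$ modulo $E$ you need $\|\tilde u_{n+1}-\tilde u_n\|\to0$. Your sketch imposes no such condition and never uses the continuity of $t\mapsto u_t$, only its values at the $t_n$. That the cross terms are not negligible is visible already for $E=\cK$, $D=\bbC$, $\Phi_1=\Phi_2$ the unital inclusion, and $\tilde u_n=(-1)^n1$: then $w=\sum_n(-1)^ne_n$, whose essential spectrum contains $0$ wherever consecutive bumps overlap, so $\pi(w)$ is not unitary.

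The repair is standard but is precisely the non-bookkeeping content of the lemma: choose $t_0<t_1<\cdots\to\infty$ with $\|u_{t_{n+1}}-u_{t_n}\|<2^{-n}$ (possible because each restriction of the path to $[k,k+1]$ is uniformly continuous; the $t_n$ need not be evenly spaced), choose lifts with $\|\tilde u_{n+1}-\tilde u_n\|<2^{-n}$, and retain all terms with $|n-m|\le1$, which then sum to $\bigl(\sum_ne_n\bigr)^2=1$ modulo $E$. You must also interleave the choice of $e_n$ with that of $\tilde u_n$ so that $\|e_n(\tilde u_n^*\tilde u_n-1)\|\to0$; membership of $\tilde u_n^*\tilde u_n-1$ in $E$ alone does not make these errors summable. (Your treatment of $w\tilde\Phi_1(d_k)-\tilde\Phi_2(d_k)w$ is fine, as no cross terms arise there.) Alternatively, follow the paper: embed $\cQ(E)$ diagonally into its path algebra, regard $(u_t)$ as one unitary there intertwining the two copies of $D$, and apply \cite[Proposition~1.4]{PhWe:Calkin} to unfold it back into $\cQ(E)$; that proposition is exactly the carefully bookkept version of the construction you are attempting.
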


\begin{proof} Only the direct implication requires a proof.  Let $C$ denote the \cstar-subalgebra of $\cQ(E)$ generated by two asymptotically unitarily equivalent unital copies of $D$. We have a continuous path of unitaries $u_t$, for $0\leq t<1$, in $\cQ(E)$ which implements the asymptotic unitary equivalence. Embed $\cQ(E)$ into $\cQ(E)_{\text\faForward}$ diagonally and consider the diagonal image of $C$. The path $u_t$ gives a unitary $u$ in $\cQ(E)_{\text\faForward}$ that implements an isomorphism between the two copies of $D$ comprising~$C$. We thus have an embedding of the \cstar-algebra generated by $C$ and $u$ into the path algebra $\cQ(E)_{\text\faForward}$ which factors through a $^*$-homomorphism from $C$ into $\cQ(E)$.  The unitary $u$ can be lifted to~$\cQ(E)$ by  \cite[Proposition~1.4]{PhWe:Calkin} and this lift implements the required isomorphism between the two copies of~$D$.  
\end{proof}

\begin{proof}[Proof of Corollary~\ref{C1}]  Let  $E$ be $A\otimes \cK$ or $C_b([0,\infty),A)$. Then $\cQ(E)$ is $A_{\text\faForward}$ or $\cQ(A\otimes \cK)$ and it is $D$-saturated by Theorem~\ref{T.Ainfty} or Theorem~\ref{T.Q(A)}. By Lemma~\ref{L.folding} we need to prove that every two unital copies of $D$ in $\cQ(E)$ are asymptotically unitarily equivalent. 
 By $D$-saturation it will suffice to prove that two commuting unital copies of $D$ in $\cQ(E)$ are asymptotically unitarily equivalent.  Since~$D$ is nuclear and simple, any two commuting copies of~$D$ generate an isomorphic  copy of $D\otimes D$. By \cite[Remark~3.3]{winter2011strongly} and \cite[Theorem~2.2]{dadarlat2009kk} the flip automorphism of $D\otimes D$ is strongly asymptotically inner, and the desired conclusion follows.  
\end{proof}

Here are Corollary~\ref{T.RC} and its proof (see also Conjecture~\ref{Conj.Q}). 

\begin{corollary}
	\label{T.commutant}
Suppose that $D$ is a separable, unital,   and strongly self-absorbing  \cstar-algebra. If $A$ is separable and unital \cstar-algebra then the following are equivalent. 
\begin{enumerate}
	\item \label{1.RC} $A\otimes D\cong A$. 
	\item \label{2.RC} $D$  embeds unitally into $A_{\text\faForward}\cap A'$. 
	\item  \label{3.RC} For every separable \cstar-subalgebra $S$ of $A_{\text\faForward}$, $D$  embeds unitally into  $A_{\text\faForward}\cap S'$. 
\end{enumerate}
\end{corollary}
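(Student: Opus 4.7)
\emph{Plan of proof.} I will establish $(3) \Rightarrow (2) \Rightarrow (1) \Rightarrow (3)$. The implications $(3) \Rightarrow (2)$ and $(1) \Rightarrow (3)$ are nearly immediate: for the former, identify $A$ with its diagonal image in $A_{\text\faForward}$ and take $S := A$ in~(3); for the latter, invoke Theorem~\ref{T.Ainfty}, which says that if $A$ is tensorially $D$-absorbing then $A_{\text\faForward}$ is $D$-saturated, so that for every separable \cstar-subalgebra $S$ of $A_{\text\faForward}$ there is a unital embedding $D \to A_{\text\faForward} \cap S'$.

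The substantive direction is $(2) \Rightarrow (1)$. The plan is to transfer a unital embedding of $D$ into $A_{\text\faForward} \cap A'$ to a unital embedding of $D$ into the asymptotic sequence algebra $\ell_\infty(A)/c_0(A)$ intersected with the relative commutant of $A$, and then invoke the standard sequential characterization of $D$-stability \cite[Theorem~7.2.2]{Ror:Classification}, which asserts that for a separable, unital~$A$, one has $A \otimes D \cong A$ if and only if $D$ embeds unitally into $(\ell_\infty(A)/c_0(A)) \cap A'$.

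The transfer is mediated by the evaluation-at-integers map
\[
\epsilon \colon C_b([0,\infty), A) \to \ell_\infty(A), \qquad \epsilon(f) := (f(n))_{n \in \bbN},
\]
a unital $^*$-homomorphism sending $C_0([0,\infty), A)$ into $c_0(A)$; it therefore descends to a unital $^*$-homomorphism $\bar\epsilon \colon A_{\text\faForward} \to \ell_\infty(A)/c_0(A)$ whose restriction to the diagonal copy of $A$ is the identity. Given a unital embedding $\Phi \colon D \to A_{\text\faForward} \cap A'$ provided by (2), the composition $\bar\epsilon \circ \Phi$ is unital; it is injective because $D$ is simple; and it takes values in the relative commutant of $A$ in $\ell_\infty(A)/c_0(A)$ since $\bar\epsilon$ fixes $A$ pointwise and $\Phi(D)$ commutes with $A$. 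This supplies the hypothesis of \cite[Theorem~7.2.2]{Ror:Classification} and concludes $(2) \Rightarrow (1)$.

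I do not expect any serious obstacle: once Theorem~\ref{T.Ainfty} and the sequential characterization of $D$-stability are available, the entire argument is formal, and the principal work of the paper lies in establishing the former. The reason the analogue of $(2) \Rightarrow (1)$ for $\cQ(A \otimes \cK)$ in place of $A_{\text\faForward}$ is not equally easy, and is left open as Conjecture~\ref{Conj.Q}, is that there is no obvious counterpart of the map $\bar\epsilon$ from the corona $\cQ(A \otimes \cK)$ to a sequence algebra of~$A$.
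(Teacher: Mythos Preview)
Your proof is correct and follows essentially the same approach as the paper's. The only cosmetic difference is that the paper factors the evaluation-at-integers map further through the ultrapower $A_{\cU}$ (for a nonprincipal ultrafilter $\cU$) rather than stopping at $\ell_\infty(A)/c_0(A)$ before invoking \cite[Theorem~7.2.2]{Ror:Classification}; either target works and the argument is otherwise identical.
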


\begin{proof} \eqref{1.RC} implies \eqref{3.RC} is Theorem~\ref{T.Ainfty}, and \eqref{3.RC} trivially implies \eqref{2.RC}. 

It remains to prove that  \eqref{2.RC} implies  \eqref{1.RC}. 	
Let $\cU$ be a nonprincipal ultrafilter on~$\bbN$. Then $C_b([0,\infty),A)\to \ell_\infty(A): f\mapsto f\rs \bbN$ is a quotient map that lifts a $^*$-homomorphism $\Psi\colon A_{\text\faForward} \to A_\cU$ (where $A_\cU$ is the ultrapower of $A$ associated with $\cU$). Clearly $\Psi$ sends the diagonal copy of $A$ in $A_{\text\faForward}$ to the diagonal copy of $A$ in $A_\cU$.  Since $D$ is nuclear and simple,~$\Psi$ is injective on~$D$ and we conclude that $D$ embeds into $A_\cU\cap A'$. By \cite[Theorem~7.2.2]{Ror:Classification}, $A$ absorbs~$D$ tensorially. 
\end{proof}

In \cite[Theorem~2.8]{FaHaRoTi:Relative} it was proven that, with $A$ and $D$ as in Corollary~\ref{C1}\footnote{A sufficient assumption on $D$ is that it has approximately inner half-flip.},  the Continuum Hypothesis implies that $A_\cU\cap D'$ is isomorphic to $A_\cU$ and $\ell_\infty(A)/c_0(A)\cap D'$ is isomorphic to $\ell_\infty(A)/c_0(A)$. The proof uses countable saturation (see \cite[\S 16]{Fa:STCstar}) of these quotient algebras. Since neither $\cQ(A\otimes \cK)$ nor $A_{\text\faForward}$ is likely to be countably saturated (for $A_{\text\faForward}$ see \cite[Exercise~16.8.36]{Fa:STCstar}, but beware the following typo: `degree-1 saturated' should be `saturated'; the present formulation clearly contradicts \cite[Theorem~15.1.5]{Fa:STCstar}), we have only the following weak analog of \cite[Theorem~2.8]{FaHaRoTi:Relative} (for the definition of an elementary submodel see \cite[Appendix D]{Fa:STCstar}). 

\begin{corollary}\label{C2} Suppose that $D$ is a separable, unital,   and strongly self-absorbing  \cstar-algebra. If $A$ is separable, unital, and tensorially $D$-absorbing then the following holds. 
	\begin{enumerate}
	\item 	For any unital embedding of $\Phi\colon D\to A_{\text\faForward}$, the relative commutant $A_{\text\faForward}\cap \Phi[D]'$ is an elementary submodel of $A_{\text\faForward}$. 
	\item 	For any unital embedding of $\Phi\colon D\to \cQ(A\otimes \cK)$, the relative commutant $\cQ(A\otimes \cK)\cap \Phi[D]'$ is an elementary submodel of $\cQ(A\otimes \cK)$. 
	\end{enumerate}
\end{corollary}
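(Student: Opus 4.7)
The plan is to verify the Tarski--Vaught test in continuous logic (cf.~\cite[Appendix~D]{Fa:STCstar}) for the inclusion $N \subseteq M$, where $M$ is $A_{\text\faForward}$ or $\cQ(A\otimes \cK)$ and $N := M \cap \Phi[D]'$. The two cases proceed identically, using Theorem~\ref{T.Ainfty} or Theorem~\ref{T.Q(A)} respectively; I describe only one. Given a formula $\varphi(x, \bar y)$, a tuple $\bar b$ from $N$, and $\varepsilon > 0$, I must produce $a' \in N$ with $\varphi(a', \bar b)^M < \inf_{x \in M} \varphi(x, \bar b)^M + \varepsilon$.

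Fix $a_0 \in M$ with $\varphi(a_0, \bar b)^M < \inf_x \varphi(x,\bar b)^M + \varepsilon$. By $D$-saturation applied to the separable subalgebra $C^*(a_0, \bar b, \Phi[D])$, there is a unital embedding $\Psi\colon D \to M$ whose image commutes with $a_0$, $\bar b$, and $\Phi[D]$. Since $\Phi[D]$ and $\Psi[D]$ commute and $D$ is nuclear, the assignment $\xi(a \otimes b) := \Phi(a)\Psi(b)$ defines a unital $^*$-homomorphism $\xi\colon D \otimes D \to M$. The image $\xi[D \otimes D]$ lies in $M \cap C^*(\bar b)'$, since both $\Phi[D]$ and $\Psi[D]$ commute with $\bar b$.

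Since $D$ is strongly self-absorbing, \cite[Remark~3.3]{winter2011strongly} and \cite[Theorem~2.2]{dadarlat2009kk} furnish a continuous path of unitaries $u_t \in D \otimes D$, $t \in [0,1)$, with $u_0 = 1$ and $u_t (a \otimes b) u_t^* \to b \otimes a$. Setting $v_t := \xi(u_t)$ yields a continuous path in $\xi[D \otimes D] \subseteq M \cap C^*(\bar b)'$ satisfying $v_t \Phi(d) v_t^* \to \Psi(d)$ for every $d \in D$. Now re-run the proof of Lemma~\ref{L.folding} applied to the separable subalgebra $C^*(\Phi[D], \Psi[D], \bar b) \subseteq M$: the path $v_t$ defines a unitary $\dot v$ in $M_{\text\faForward}$ implementing an automorphism of this subalgebra that swaps $\Phi$ and $\Psi$ and fixes $\bar b$ pointwise, and \cite[Proposition~1.4]{PhWe:Calkin} lifts this to a unitary $u \in M$ inducing the same automorphism. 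In particular $u \Phi(d) u^* = \Psi(d)$ for all $d \in D$ and $u \in M \cap C^*(\bar b)'$.

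Setting $a' := u^* a_0 u$, the identity $u^* \Psi(d) = \Phi(d) u^*$ together with $[a_0, \Psi(d)] = 0$ yields $a' \in \Phi[D]'$, while $u \bar b u^* = \bar b$ gives $\varphi(a', \bar b)^M = \varphi(u^* a_0 u, u^* \bar b u)^M = \varphi(a_0, \bar b)^M < \inf_x \varphi(x,\bar b)^M + \varepsilon$. The principal obstacle is that a direct appeal to Corollary~\ref{C1} produces a unitary implementing the equivalence of $\Phi$ and $\Psi$ with no control over its interaction with $\bar b$; the resolution is to enlarge the subalgebra to include $\bar b$ before applying the unfolding argument of Lemma~\ref{L.folding} and \cite[Proposition~1.4]{PhWe:Calkin}, which succeeds precisely because the path $v_t$ already lies in the commutant of $\bar b$.
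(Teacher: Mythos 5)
Your proof is correct, and its skeleton --- the Tarski--Vaught test, with the near-optimal witness $a_0$ moved into $M\cap\Phi[D]'$ by a unitary that fixes the parameters $\bar b$ --- is the one underlying the paper's proof, which simply declares the argument analogous to that of \cite[Theorem~2.8]{FaHaRoTi:Relative} with Lemma~1.5 of that paper (approximate unitary equivalence of commuting unital copies of $D$, implemented by unitaries from $\cst(\Phi[D]\cup\Psi[D])$) used in place of Lemma~1.6. Where you genuinely diverge is in upgrading to an \emph{exact} unitary equivalence by rerunning the folding argument (Lemma~\ref{L.folding} together with \cite[Proposition~1.4]{PhWe:Calkin}) on the enlarged separable subalgebra $\cst(\Phi[D],\Psi[D],\bar b)$, so that the resulting unitary $u$ both conjugates $\Phi$ onto $\Psi$ and fixes $\bar b$; this is precisely the refinement of Corollary~\ref{C1} that the situation requires, and you identify the need for it explicitly. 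The upgrade buys you something real: with only approximate unitary equivalence the conjugates $u_n^*a_0u_n$ commute with $\Phi[D]$ only approximately, and since neither $A_{\text\faForward}$ nor $\cQ(A\otimes \cK)$ is known to be countably saturated one cannot simply realize the limiting type; your exact unitary makes $a'=u^*a_0u$ land in $M\cap\Phi[D]'$ on the nose while leaving $\varphi(\cdot,\bar b)$ unchanged, because $\Ad u^*$ is an automorphism fixing $\bar b$. The remaining points are routine and consistent with the paper's own usage: $\xi$ is well defined because $D$ is nuclear (so $D\otimes_{\max}D=D\otimes D$ and commuting ranges induce a unital $^*$-homomorphism), the limit $\lim_t v_tcv_t^*$ exists for all $c$ in the generated subalgebra and not merely on generators by the standard approximation argument, and in case (1) the unfolding argument applies because $A_{\text\faForward}$ is the corona of a $\sigma$-unital algebra, exactly as in the proof of Corollary~\ref{C1}.
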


\begin{proof} The proofs are analogous to the proof of \cite[Theorem~2.8]{FaHaRoTi:Relative}, but using Lemma~1.5 of the same paper in place of Lemma~1.6. 
\end{proof}

The possibilities that $A_{\text\faForward}$ and $A_{\text\faForward}\cap \Phi[D]’$ are isomorphic or that $\cQ(A\otimes \cK)$ and $\cQ(A\otimes \cK)\cap \Phi[D]’$ are isomorphic remain open. 
By \cite[Corollary 15.1.6]{Fa:STCstar} both $\cQ(A\otimes \cK)\cap \Phi[D]'$ and $A_{\text\faForward}\cap \Phi[D]'$ are countably degree-1 saturated, however this does not suffice to prove that $A_{\text\faForward}$ and $A_{\text\faForward}\cap \Phi[D]’$ are isomorphic or that $\cQ(A\otimes \cK)$ and $\cQ(A\otimes \cK)\cap \Phi[D]’$ are isomorphic.  Notably, $\cQ(H)$ is not even countably homogeneous (\cite{farah2015calkin}).  This poses a serious obstruction to using model-theoretic methods for constructing automorphisms of $\cQ(H)$ whose restriction to some separable subalgebra is not implemented by a unitary. It is  presently not known whether such automorphism of $\cQ(H)$ can exist in any model of ZFC.  The logical complexity of the assertion that such automorphism exists is~$\Sigma^2_1$ and therefore standard set-theoretic arguments (including Woodin's~$\Sigma^2_1$ absoluteness theorem, see \cite{Lar:Stationary}, also \cite{Wo:Beyond}) suggest that if the existence of such automorphism of $\cQ(H)$ is relatively consistent with ZFC, then it follows from the Continuum Hypothesis.

\section{Concluding remarks} 

\label{S.concluding}

To put Theorem~\ref{T.A} in proper context, note that forcing axioms MA and $\OCAT$ (see \cite[\S 8.7]{Fa:STCstar}) together imply that $\cQ(H)$ is not isomorphic to the corona of any simple \cstar-algebra not stably isomorphic to the algebra $\cK$ of compact operators on a separable, infinite-dimensional Hilbert space. This is a consequence of  \cite[Theorem~B]{vignati2018rigidity} and the straightforward fact that in this situation there are no topologically trivial isomorphisms. The same axioms imply that the Calkin algebra has no outer automorphisms (\cite{Fa:All}, \cite[\S 17.2--17.8]{Fa:STCstar}) and even that each one of its endomorphisms is unitarily equivalent to the amplifications by a matrix algebra (\cite{vaccaro2019trivial}). However, the Continuum Hypothesis implies that the Calkin algebra has outer automorphisms (\cite{PhWe:Calkin}, see also \cite[\S 17.1]{Fa:STCstar}).

It is not known whether there are nonisomorphic separable, simple, \cstar-algebras with isomorphic coronas (see \cite[Question 3.19]{farah2022corona}). 

\begin{conjecture} \label{C.Calkin} The Calkin algebra is not isomorphic to the corona of any simple, separable, non-unital \cstar-algebra other than $\cK$. 
\end{conjecture}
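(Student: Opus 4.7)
The plan is to reduce Conjecture~\ref{C.Calkin} to the following local statement: for every simple, separable, non-unital \cstar-algebra $B$ not isomorphic to $\cK$, and every separable \cstar-subalgebra $S$ of $\cQ(B)$, the relative commutant $\cQ(B)\cap S'$ contains a simple, noncommutative, unital \cstar-subalgebra. Once this is established, an isomorphism $\cQ(H)\cong \cQ(B)$ would immediately contradict Theorem~\ref{T.B}. Since $\cQ(B)$ must be simple for any potential isomorphism with $\cQ(H)$, Brown's stabilization theorem together with the results on simplicity of $\cQ(A\otimes \cK)$ cited in the introduction reduces the problem to the case $B\cong A\otimes \cK$ with $A$ separable, simple, unital, and either a matrix algebra (giving $B\cong \cK$, the excluded case) or infinite-dimensional and purely infinite.

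When $A$ is tensorially $\cZ$-absorbing, Theorem~\ref{T.Q(A)} already supplies a unital $\cZ$ in $\cQ(A\otimes \cK)\cap S'$, which is the content of Corollary~\ref{C.D}. For non-stable or projectionless $B$, the plan is to exploit the natural unital embedding of $\cM(B)$ into $\cM(B\otimes \cK)$ (descending to a unital embedding of coronas) in order to transfer a $D$-saturation property back from the stabilization, and thereby reduce to the stable case. The remaining and hardest subcase is a simple, separable, unital, purely infinite $A$ that is not tensorially $\cZ$-stable. Here I would attempt to adapt the proof of Theorem~\ref{T.Q(A)} by replacing the role of a strongly self-absorbing $D$ with a direct construction: use pure infiniteness of $A\otimes \cK$ to produce, inside each corner $(p_{2i}+p_{2i+1})(A\otimes \cK)(p_{2i}+p_{2i+1})$ (in the notation of Lemma~\ref{L.diagonalization}), infinitely many mutually orthogonal, Murray--von Neumann equivalent isometries, and then assemble them, via the block-diagonalization of Lemma~\ref{L.diagonalization} and a reindexing step along the lines of the proof of Corollary~\ref{C1}, into a single unital copy of $\cO_\infty$ contained in $\cQ(A\otimes \cK)\cap S'$.

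The central obstacle is the absence of a strongly asymptotically inner flip for $A\otimes A$, which is the backbone of Lemma~\ref{L.discretization} (via \cite{dadarlat2009kk}). Without such a flip, the unitary path that lets one twist successive block embeddings into each other is unavailable, and one must find a substitute: plausibly, a Voiculescu-type absorption argument in $\cQ(A\otimes \cK)$ that lets one replace any countable collection of approximately commuting isometries with genuinely commuting ones modulo arbitrarily small perturbations. Combined with the simplicity and pure infiniteness of $A\otimes \cK$, this should suffice to produce a unital copy of $\cO_\infty$ in each $\cQ(A\otimes \cK)\cap S'$, contradicting Theorem~\ref{T.B} and completing the proof.
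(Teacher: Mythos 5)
This statement is Conjecture~\ref{C.Calkin}: the paper does not prove it, explicitly labels it as open, and devotes much of \S\ref{S.concluding} to explaining why the known methods stop short of it --- including the construction of a concrete candidate counterexample (a separable elementary submodel $B$ of $\cQ(B_0\otimes\cK)$, with $B_0$ a separable elementary submodel of $\cQ(H)$, which is purely infinite, simple, has the same $K$-theory as $\cK$, and is \emph{not} $\cO_\infty$-absorbing). So your proposal cannot be a complete proof, and the gap sits exactly where you place your ``hardest subcase.''

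Two concrete problems. First, the reduction to $B\cong A\otimes\cK$ with $A$ unital is not justified: a simple, separable, non-unital \cstar-algebra need not be stable (the paper cites R\o rdam for examples that are neither unital nor stable yet have simple coronas), and the simplicity criteria quoted in the introduction apply to stabilizations of unital algebras, not to arbitrary simple non-unital $B$. Zhang's dichotomy (unital or stable) requires $B$ to be purely infinite, which is not given. Second, and fatally, your treatment of a purely infinite, simple, unital, non-$\cZ$-stable $A$ rests on an unproven substitute for the Dadarlat--Winter asymptotically inner flip: there is no ``Voiculescu-type absorption'' theorem in $\cQ(A\otimes\cK)$ that upgrades approximately commuting isometries to commuting ones, and Lemma~\ref{L.discretization} is unavailable without the flip. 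Moreover, what you are trying to prove --- a unital copy of $\cO_\infty$ in $\cQ(A\otimes\cK)\cap S'$ for \emph{every} separable $S$ --- is precisely $\cO_\infty$-saturation of the corona, which by the pattern of Corollary~\ref{T.commutant} and Conjecture~\ref{Conj.Q} is expected to force $A$ itself to be $\cO_\infty$-absorbing, contradicting your standing assumption. The paper's own remark that $B_0$ above is not $\cO_\infty$-absorbing (via Theorem~\ref{T.B} and the folding argument of Phillips--Weaver) shows that this case is not a technical nuisance but the actual open heart of the conjecture; indeed the paper notes that any isomorphism $\cQ(B\otimes\cK)\cong\cQ(H)$ cannot be ruled out in ZFC alone for such $B$ by currently known methods, and conversely cannot be constructed in ZFC by Vignati's rigidity theorem.
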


A few remarks on obtaining a candidate $A$ for a counterexample to Conjecture~\ref{C.Calkin}. When $A$ is of the form $B\otimes \cK$ for a unital~$B$, then  by \cite[Theorem~3.2]{rordam1991ideals} or by  \cite[Definition~2.5 and Theorem~2.8]{lin1991simple} and \cite{lin2004simple}, it has to be purely infinite and simple in order  to have a simple and purely infinite corona. Notably, a relative of the first part of Lemma~\ref{L.diagonalization} (to be precise, \cite[Theorem~3.1]{Ell:Derivations}) is crucial in  analyzing ideals of multiplier algebras and coronas  (see e.g.,  the proof of \cite[Theorem~2.8]{rordam1991ideals}).

By the main result of  \cite{zhang1990property}, a purely infinite and simple \cstar-algebra is either unital or stable (i.e., tensorially $\cK$-absorbing). There are  examples of \cstar-algebras $A$  that  are neither unital nor stable but have simple coronas (see \cite[Remark~3.3]{rordam1991ideals}), but I'll concentrate on the purely infinite, simple case.

By a result of Kirchberg, if $B$ is nuclear in addition to being simple,  purely infinite, and separable, then it necessarily absorbs $\cO_\infty$ tensorially   (see \cite[\S 7.2]{Ror:Classification}), and therefore Corollary~\ref{C.D} implies that $\cQ(B\otimes \cK)$ is not isomorphic to $\cQ(H)$. The non-nuclear examples of purely infinite, simple, and separable \cstar-algebras from \cite[Theorem~4.3.8 and Theorem~4.3.11]{Phi:Classification} all absorb $\cO_\infty$ tensorially, and are therefore ruled out as well. In \cite[Example~4.8]{blackadar1992approximately} and \cite[Corollary~1.2]{dykema1998purely}, purely infinite and simple  \cstar-algebras have been constructed inside type III factors  (by Blackadar's method or by taking elementary submodels, see \cite[Proposition~7.3.1]{Muenster}). These examples are not approximately divisible, hence not $\cO_\infty$-absorbing and possibly not $\cZ$-absorbing. However,  they inherit trivial $K$-groups from the ambient factors. This rules them out as counterexamples to Conjecture~\ref{C.Calkin},  since standard methods (see the paragraph following Corollary~\ref{C.D}) imply that $B$ would have to have the same $K$-theory as the algebra of compact operators, namely $K_0(B)=\bbZ$ and $K_1(B)=0$. Possibly the reduced free products as in    \cite[Theorem~3.1]{dykema1998purely} can produce a purely infinite and simple, but not $\cZ$-stable, \cstar-algebra with the right $K$-theory, but here is another example (using the methods of \cite{Muenster}). 

Here is another example.  Let $B_0$ be a separable elementary submodel of~$\cQ(H)$, and let $B$ be a separable elementary submodel of $\cQ(B_0\otimes \cK)$. Then~$B$ is purely infinite and simple (because this property of \cstar-algebras is axiomatizable, see \cite[Theorem~2.5.1]{Muenster}),  $K_0(B)=K_1(B_0)=K_0(\cK)$, and $K_1(B)=K_0(B_0)=K_1(\cK)$, as required. By Theorem~\ref{T.B} combined with a clever use of the folding argument  (i.e., \cite[Proposition~1.4]{PhWe:Calkin} again) communicated to me by Jamie Gabe, Chris Schafhauser and Stuart White, $B_0$ is not $\cO_\infty$-absorbing.

It is however not at all clear how would one construct a nontrivial isomorphism between $\cQ(B\otimes \cK)$  and $\cQ(H)$ (or how to assure that  $\cQ(B\otimes \cK)$ has a $K$-theory reversing automorphism). What is clear is that such isomorphism cannot be constructed in ZFC  (by Vignati's result mentioned earlier) and that the Continuum Hypothesis could be helpful (see \cite[\S 11]{farah2022corona} on this  and \cite[\S 5 and \S 6]{farah2022corona} for more information on this sort of a rigidity question). 

Theorem~\ref{T.B} gives a rather crude description of a small part of the $\Ext$ group of Wassermann's \cstar-algebra $\cW(\Gamma)$. I haven't been able to prove anything new about the $\Ext$ of the reduced  group algebra of the free group, $\cstr(F_2)$, although it is well-known that its Ext is not a group (\cite{haagerup2005new}).  The techniques  developed in~\cite{belinschi2022strong} may be relevant to this problem. As pointed out by Nigel Higson, there is no known  example of a separable, unital \cstar-algebra $A$ such that $\Aut(\cQ(H))$ acts on $\Ext^w(A)$ nontrivially in some model on ZFC (see \cite[Question~3.18]{farah2022corona}; note that we use the weak Ext, allowing unitaries with nonzero Fredholm index, because otherwise letting $A$ be $M_2(\bbC)$ provides an example), and $\cW(\Gamma)$ provides an interesting test case.  Under forcing axioms this action is trivial, since  all automorphisms of $\cQ(H)$ are inner. 
%

The following conjectures that there is an analog of Corollary~\ref{T.RC} (and \cite[Theorem~7.2.2]{Ror:Classification}) for $\cQ(A\otimes \cK)$.

\begin{conjecture} \label{Conj.Q}Suppose that $A$ and $D$ are separable and unital  \cstar-algebras, and that $D$ is  strongly self-absorbing. Then the following are equivalent. 
\begin{enumerate}
	\item $A\otimes D\cong A$. 
	\item For every separable \cstar-subalgebra $S$ of $\cQ(A\otimes \cK)$ there is a unital embedding of $D$ into $\cQ(A\otimes\cK)\cap S'$. 
\end{enumerate}
\end{conjecture}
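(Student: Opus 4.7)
The implication (1) $\Rightarrow$ (2) is Theorem~\ref{T.Q(A)}, so I focus on (2) $\Rightarrow$ (1). Applying (2) with $S := A$ the diagonal copy of $A$ in $\cQ(A\otimes\cK)$ (coming from $a\mapsto a\otimes 1_{\cM(\cK)}$) yields a unital $^*$-homomorphism $\Phi\colon D\to\cQ(A\otimes\cK)\cap A'$. By Rordam's characterization \cite[Theorem~7.2.2]{Ror:Classification}, it then suffices to produce a unital embedding of $D$ into $A_\cU\cap A'$ for some free ultrafilter $\cU$ on $\bbN$. The strategy is to imitate the path-algebra proof of Corollary~\ref{T.commutant}, which used the $^*$-homomorphism $A_{\text\faForward}\to A_\cU$ induced by restriction to $\bbN$; no such map exists for the corona, so one must work with completely positive approximations instead.

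\textbf{Extracting block-diagonal data.} Fix an approximate unit of projections $(p_n)$ for $A\otimes\cK$ with $p_0=0$, and set $q_n:=p_n-p_{n-1}$, so that $q_n(A\otimes\cK)q_n\cong M_{k_n}(A)$ for some $k_n$. Since $D$ is nuclear, the Choi--Effros lifting theorem produces a c.p.c.\ lift $\tilde\Phi\colon D\to\cM(A\otimes\cK)$ of $\Phi$. Apply Lemma~\ref{L.diagonalization} simultaneously to the countable family of lifts $\tilde\Phi(d_m)$, $\tilde\Phi(d_m)^*$, $\tilde\Phi(d_md_l)$, and $\tilde\Phi(d_m)\tilde\Phi(d_l)$, where $(d_m)$ is a dense sequence in $D$, obtaining 2-block-diagonal representatives modulo $A\otimes\cK$. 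After grouping adjacent blocks, the compressions to $r_n:=q_{2n}+q_{2n+1}$ yield c.p.c.\ maps $\phi_n\colon D\to r_n\cM(A\otimes\cK)r_n\cong M_{k_n'}(A)$ which are asymptotically multiplicative, $^*$-preserving, and unital (because the pertinent error terms lie in $A\otimes\cK$, whose $r_n$-cutdowns vanish in norm) and asymptotically central with respect to the diagonal $A$ (because $\Phi(D)\subseteq A'$).

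\textbf{Ultraproduct step and main obstacle.} For a free ultrafilter $\cU$ on $\bbN$, the sequence $(\phi_n)$ induces a unital $^*$-homomorphism $\hat\Phi\colon D\to\prod_\cU M_{k_n'}(A)\cap A'$, and Rordam's theorem would finish once this is upgraded to a unital embedding into $A_\cU\cap A'$. This last step is where I expect the proof to stall, and is precisely why the conjecture remains open. The ultraproduct $\prod_\cU M_{k_n'}(A)$ is strictly richer than $A_\cU$: since $k_n'\to\infty$, it absorbs the matricial ultraproduct $\prod_\cU M_{k_n'}(\bbC)$, which in general need bear no relation to $D$. Although $A_\cU$ sits in $\prod_\cU M_{k_n'}(A)$ as a corner cut by a matrix-unit projection, this corner is not central and does not commute with $\hat\Phi(D)$, so naive compression destroys unitality. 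Trying to exploit $D\otimes D\cong D$ together with strong asymptotic innerness of the flip (\cite[Theorem~2.2]{dadarlat2009kk}) to ``slide'' $\hat\Phi(D)$ into a scalar corner appears to require full countable saturation of the codomain, but $\cQ(A\otimes\cK)$ is only countably degree-1 saturated by \cite[Corollary~15.1.6]{Fa:STCstar}; bridging this gap between completely positive block approximations in the corona and genuine $^*$-homomorphisms into $A_\cU$ is the principal obstruction I would have to overcome.
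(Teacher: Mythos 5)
This statement is not a theorem of the paper: it appears there verbatim as Conjecture~\ref{Conj.Q} and is left open, so there is no proof of the paper's to compare yours against. Your reduction of (1)$\Rightarrow$(2) to Theorem~\ref{T.Q(A)} is correct, and your diagnosis that the whole difficulty sits in (2)$\Rightarrow$(1) matches the paper, which proves the analogous equivalence only for the path algebra (Corollary~\ref{T.commutant}), precisely because restriction to $\bbN$ gives a unital $^*$-homomorphism $A_{\text\faForward}\to A_\cU$ fixing the diagonal copy of $A$; no such map out of $\cQ(A\otimes\cK)$ is available.

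Your attempted converse is, as you say yourself, not a proof, and it in fact breaks down one step earlier than where you flag it. Lemma~\ref{L.diagonalization} does not make a lift $\tilde\Phi(d)$ asymptotically block-diagonal with respect to a single blocking $(r_n)$; it writes it as $a^0+a^1$ modulo $A\otimes\cK$, with the two summands block-diagonal with respect to two \emph{staggered} blockings. Consequently the multiplicativity defect of your compressions, $r_n\tilde\Phi(d)(1-r_n)\tilde\Phi(d')r_n$, is governed by the off-diagonal blocks of the $a^1$-parts and need not tend to $0$ in norm. Asymptotic multiplicativity of the compressions of some lift along some blocking is essentially quasidiagonality of the extension, which is far from automatic: for the Toeplitz extension of $C(\bbT)$ it fails for every blocking because of the index obstruction. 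So already the production of asymptotically multiplicative c.p.c.\ maps $\phi_n\colon D\to M_{k_n'}(A)$ is unjustified. The second gap, which you do identify honestly, is also genuine: even granting a unital $^*$-homomorphism of $D$ into $\prod_\cU M_{k_n'}(A)\cap A'$, there is no known way to trade the growing matrix amplifications for a unital copy of $D$ in $A_\cU\cap A'$ without already assuming something like $D$-absorption of $A$, and neither countable degree-1 saturation of the corona nor the flip argument of \cite{dadarlat2009kk} supplies this. In short: (1)$\Rightarrow$(2) is Theorem~\ref{T.Q(A)}, (2)$\Rightarrow$(1) remains open, and your proposal does not close it.
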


\bibliographystyle{plain}
\bibliography{ifmainbib}

\begin{thebibliography}{10}

\bibitem{Arv:Notes}
W.~Arveson.
\newblock Notes on extensions of \cstar-algebras.
\newblock {\em Duke Math. J.}, 44:329--355, 1977.

\bibitem{bekka2008kazhdan}
B.~Bekka, P.~de~La~Harpe, and A.~Valette.
\newblock {\em Kazhdan's property {(T)}}.
\newblock Cambridge University Press, 2008.

\bibitem{belinschi2022strong}
S.~Belinschi and M.~Capitaine.
\newblock Strong convergence of tensor products of independent {GUE} matrices.
\newblock {\em arXiv preprint arXiv:2205.07695}, 2022.

\bibitem{blackadar1998k}
B.~Blackadar.
\newblock {\em K-theory for operator algebras}, volume~5 of {\em MSRI
  Publications}.
\newblock Cambridge University Press, 1998.

\bibitem{Black:Operator}
B.~Blackadar.
\newblock {\em Operator algebras}, volume 122 of {\em Encyclopaedia Math. Sci.}
\newblock Springer-Verlag, Berlin, 2006.
\newblock Theory of \cstar-algebras and von Neumann algebras, Operator Algebras
  and Non-commutative Geometry, III.

\bibitem{blackadar1992approximately}
B.~Blackadar, A.~Kumjian, and M.~R{\o}rdam.
\newblock Approximately central matrix units and the structure of
  noncommutative tori.
\newblock {\em K-theory}, 6(3):267--284, 1992.

\bibitem{bosa2019covering}
J.~Bosa, N.P. Brown, Y.~Sato, A.P. Tikuisis, S.~White, and W.~Winter.
\newblock {\em Covering dimension of \cstar-algebras and 2-coloured
  classification}, volume 257 of {\em Mem. Amer. Math. Soc.}
\newblock 2019.

\bibitem{BrDoFi:Unitary}
L.G. Brown, R.G. Douglas, and P.A. Fillmore.
\newblock Unitary equivalence modulo the compact operators and extensions of
  {$C\sp{\ast} $}-algebras.
\newblock In {\em Proceedings of a Conference on Operator Theory}, volume 345
  of {\em Lecture Notes in Math.}, pages 58--128. Springer, 1973.

\bibitem{BrDoFi}
L.G. Brown, R.G. Douglas, and P.A. Fillmore.
\newblock Extensions of \cstar-algebras and {K}-homology.
\newblock {\em Ann. of Math. (2))}, 105:265--324, 1977.

\bibitem{BrOz:C*}
N.~Brown and N.~Ozawa.
\newblock {\em {\cstar}-algebras and finite-dimensional approximations},
  volume~88 of {\em Grad. Stud. Math.}
\newblock American Mathematical Society, Providence, RI, 2008.

\bibitem{brown2006invariant}
N.P. Brown.
\newblock {\em Invariant Means and Finite Representation Theory of
  \cstar-Algebras}.
\newblock Number 865 in Mem. Amer. Math. Soc. American Mathematical Society,
  2006.

\bibitem{connes1989almost}
A.~Connes and N.~Higson.
\newblock Almost homomorphisms and {KK}-theory.
\newblock {\em Unpublished manuscript}, 1989.

\bibitem{CoFa:Automorphisms}
S.~Coskey and I.~Farah.
\newblock Automorphisms of corona algebras, and group cohomology.
\newblock {\em Trans. Amer. Math. Soc.}, 366(7):3611--3630, 2014.

\bibitem{dadarlat2009kk}
M.~D\^{a}d\^{a}rlat and W.~Winter.
\newblock On the {KK}-theory of strongly self-absorbing \cstar-algebras.
\newblock {\em Math. Scand.}, 104:95--107, 2009.

\bibitem{Dav:C*}
K.R. Davidson.
\newblock {\em \cstar-algebras by example}, volume~6 of {\em Fields Inst.
  Monogr.}
\newblock American Mathematical Society, Providence, RI, 1996.

\bibitem{dykema1998purely}
K.J. Dykema and M.~R\o{r}dam.
\newblock Purely infinite, simple \cstar-algebras arising from free product
  constructions.
\newblock {\em Canad. J. Math.}, 50(2):323--341, 1998.

\bibitem{effros1978c}
E.~G. Effros and J.~Rosenberg.
\newblock C*-algebras with approximately inner flip.
\newblock {\em Pacific J. Math}, 77(2):417--443, 1978.

\bibitem{Ell:Derivations}
G.A. Elliott.
\newblock Derivations of matroid \cstar-algebras. {II}.
\newblock {\em Ann. of Math. (2)}, 100:407--422, 1974.

\bibitem{Ell:Ideal}
G.A. Elliott.
\newblock The ideal structure of the multiplier algebra of an {AF} algebra.
\newblock {\em C.R. Math. Acad. Sci. Soc. R. Can.}, 9(5):225--230, 1987.

\bibitem{elliott2007relative}
G.A. Elliott and D.~Kucerovsky.
\newblock A relative double commutant theorem for hereditary
  sub-\cstar-algebras.
\newblock {\em C.R. Math. Acad. Sci. Soc. R. Can.}, 29(1):22--27, 2007.

\bibitem{Fa:All}
I.~Farah.
\newblock All automorphisms of the {C}alkin algebra are inner.
\newblock {\em Ann. of Math. (2)}, 173:619--661, 2011.

\bibitem{Fa:STCstar}
I.~Farah.
\newblock {\em Combinatorial Set Theory and \cstar-algebras}.
\newblock Springer Monogr. Math. Springer, 2019.

\bibitem{farah2022corona}
I.~Farah, S.~Ghasemi, A.~Vaccaro, and A.~Vignati.
\newblock Corona rigidity.
\newblock {\em arXiv preprint arXiv:2201.11618}, 2022.

\bibitem{Muenster}
I.~Farah, B.~Hart, M.~Lupini, L.~Robert, A.~Tikuisis, A.~Vignati, and
  W.~Winter.
\newblock Model theory of \cstar-algebras.
\newblock {\em Mem. Amer. Math. Soc.}, 271(1324):viii+127, 2021.

\bibitem{FaHaRoTi:Relative}
I.~Farah, B.~Hart, M.~R{\o}rdam, and A.~Tikuisis.
\newblock Relative commutants of strongly self-absorbing \cstar-algebras.
\newblock {\em Selecta Math.}, 23(1):363--387, 2017.

\bibitem{farah2015calkin}
I.~Farah and I.~Hirshberg.
\newblock The {C}alkin algebra is not countably homogeneous.
\newblock {\em Proc. Amer. Math. Soc.}, 144(12):5351--5357, 2016.

\bibitem{gabe2019classification}
J.~Gabe.
\newblock Classification of {$\cO_\infty$}-stable \cstar-algebras.
\newblock {\em Mem. Amer. Math. Soc.}, to appear.

\bibitem{giordano2019relative}
T.~Giordano and Ping~W. Ng.
\newblock A relative bicommutant theorem: the stable case of {P}edersen's
  question.
\newblock {\em Adv. Math.}, 342:1--13, 2019.

\bibitem{guentner2000equivariant}
E.~Guentner, N.~Higson, and J.~Trout.
\newblock {\em Equivariant {$ E $}-theory for \cstar-algebras}, volume 703.
\newblock Mem. Amer. Math. Soc., 2000.

\bibitem{haagerup2005new}
U.~Haagerup and S.~Thorbj{\o}rnsen.
\newblock A new application of random matrices: {${\mathrm{Ext}}(C^*_{\mathrm{
  red}}(F_2))$} is not a group.
\newblock {\em Ann. of Math. (2)}, pages 711--775, 2005.

\bibitem{higson2000analytic}
N.~Higson and J.~Roe.
\newblock {\em Analytic {K}-homology}.
\newblock Oxford Math. Monogr. Oxford University Press, 2000.

\bibitem{JohPar}
B.E. Johnson and S.K. Parrott.
\newblock Operators commuting with a von {N}eumann algebra modulo the set of
  compact operators.
\newblock {\em J. Funct. Anal.}, 11:39--61, 1972.

\bibitem{Kirc:Central}
E.~Kirchberg.
\newblock Central sequences in \cstar-algebras and strongly purely infinite
  algebras.
\newblock In {\em Operator Algebras: The Abel Symposium 2004}, volume~1 of {\em
  Abel Symp.}, pages 175--231. Springer, Berlin, 2006.

\bibitem{Lar:Stationary}
P.B. Larson.
\newblock {\em The stationary tower}, volume~32 of {\em University Lecture
  Series}.
\newblock American Mathematical Society, Providence, RI, 2004.
\newblock Notes on a course by W.H.Woodin.

\bibitem{lin1991simple}
H.~Lin.
\newblock Simple \cstar-algebras with continuous scales and simple corona
  algebras.
\newblock {\em Proc. Amer. Math. Soc.}, 112(3):871--880, 1991.

\bibitem{lin2004simple}
H.~Lin.
\newblock Simple corona \cstar-algebras.
\newblock {\em Proc. Amer. Math. Soc.}, 132(11):3215--3224, 2004.

\bibitem{manuilov2004theory}
V.~Manuilov and K~Thomsen.
\newblock {E}-theory is a special case of {KK}-theory.
\newblock {\em Proc. London Math. Soc.}, 88:455--478, 2004.

\bibitem{matui2014decomposition}
H.~Matui and Y.~Sato.
\newblock Decomposition rank of {UHF}-absorbing \cstar-algebras.
\newblock {\em Duke Math. J.}, 163(14):2687--2708, 2014.

\bibitem{mckenney2018forcing}
P.~McKenney and A.~Vignati.
\newblock Forcing axioms and coronas of {$\mathrm{C}^*$}-algebras.
\newblock {\em J. Math. Log.}, 21(2):Paper No. 2150006, 73, 2021.

\bibitem{ng2018double}
P.W. Ng.
\newblock A double commutant theorem for the corona algebra of a {R}azak
  algebra.
\newblock {\em New York J. Math}, 24:157--165, 2018.

\bibitem{ng2022real}
P.W. Ng.
\newblock Real rank zero for purely infinite corona algebras.
\newblock {\em Rocky Mountain J. Math.}, 52(1):243--261, 2022.

\bibitem{ng2019functorial}
P.W. Ng and T.~Robin.
\newblock Functorial properties of {${\mathrm Ext}_u(\cdot,B)$} when {$B$} is
  simple with continuous scale.
\newblock {\em J. Operator Theory}, 81(2):481--498, 2019.

\bibitem{Phi:Classification}
N.C. Phillips.
\newblock A classification theorem for nuclear purely infinite simple
  {\cstar}-algebras.
\newblock {\em Doc. Math.}, 5:49--114, 2000.

\bibitem{PhWe:Calkin}
N.C. Phillips and N.~Weaver.
\newblock The {C}alkin algebra has outer automorphisms.
\newblock {\em Duke Math. J.}, 139:185--202, 2007.

\bibitem{rordam1991ideals}
M.~R{\o}rdam.
\newblock Ideals in the multiplier algebra of a stable \cstar-algebra.
\newblock {\em J. Operator Theory}, 25(2):283--298, 1991.

\bibitem{Ror:Classification}
M.~R{\o}rdam.
\newblock {\em Classification of nuclear {\cstar}-algebras}, volume 126 of {\em
  Encyclopaedia Math. Sci.}
\newblock Springer-Verlag, Berlin, 2002.

\bibitem{skandalis1988notion}
G.~Skandalis.
\newblock Une notion de nucl{\'e}arit{\'e} en {K}-th{\'e}orie (d'apr{\`e}s {J.}
  {C}untz).
\newblock {\em K-theory}, 1(6):549--573, 1988.

\bibitem{tikuisis2017quasidiagonality}
A.~Tikuisis, S.~White, and W.~Winter.
\newblock Quasidiagonality of nuclear \cstar-algebras.
\newblock {\em Ann. of Math. (2)}, 185(2):229--284, 2017.

\bibitem{ToWi:Strongly}
A.S. Toms and W.~Winter.
\newblock Strongly self-absorbing {$C^*$}-algebras.
\newblock {\em Trans. Amer. Math. Soc.}, 359(8):3999--4029, 2007.

\bibitem{vaccaro2019trivial}
A.~Vaccaro.
\newblock Trivial endomorphisms of the {C}alkin algebra.
\newblock {\em Israel J. Math.}, pages 1--31, 2021.

\bibitem{vignati2018rigidity}
A.~Vignati.
\newblock Rigidity conjectures.
\newblock {\em Ann. Sci. \'{E}cole Norm. Sup. (4)}, 55(6):687--1738, 2022.

\bibitem{voiculescu1976non}
D.~V. Voiculescu.
\newblock A non-commutative {W}eyl-von {N}eumann theorem.
\newblock {\em Rev. Roumaine Math. Pures Appl}, 21(1):97--113, 1976.

\bibitem{Voi:Countable}
D.~V. Voiculescu.
\newblock Countable degree-1 saturation of certain \cstar-algebras which are
  coronas of {B}anach algebras.
\newblock {\em Groups, Geometry, and Dynamics}, 8(3):985--1006, 2014.

\bibitem{wassermann1991c}
S.~Wassermann.
\newblock \cstar-algebras associated with groups with {K}azhdan's property {T}.
\newblock {\em Ann. of Math. (2)}, pages 423--431, 1991.

\bibitem{We:Set}
N.~Weaver.
\newblock Set theory and {$C^*$}-algebras.
\newblock {\em Bull. Symb. Logic}, 13:1--20, 2007.

\bibitem{winter2011strongly}
W.~Winter.
\newblock Strongly self-absorbing \cstar-algebras are {$\cZ$}-stable.
\newblock {\em J. Noncommutative Geom.}, 5(2):253--264, 2011.

\bibitem{Wo:Beyond}
W.H. Woodin.
\newblock Beyond {$ \Sigma\sp 2\sb 1$} absoluteness.
\newblock In {\em Proceedings of the International Congress of Mathematicians,
  Vol. I (Beijing, 2002)}, pages 515--524, Beijing, 2002. Higher Ed. Press.

\bibitem{zhang1990property}
S.~Zhang.
\newblock A property of purely infinite simple \cstar-algebras.
\newblock {\em Proc. Amer. Math. Soc.}, 109(3):717--720, 1990.

\bibitem{zhang1992certain}
S.~Zhang.
\newblock Certain \cstar-algebras with real rank zero and their corona and
  multiplier algebras. {P}art {I}.
\newblock {\em Pacific J. Math.}, 155(1):169--197, 1992.

\end{thebibliography}

\end{document}